\newcommand{\SXL}[1]{\textcolor{black}{#1}}
\newcommand{\FA}[1]{\textcolor{black}{#1}}
\newcommand{\svw}[1]{\textcolor{black}{#1}}
\newcommand{\sagan}[1]{\textcolor{black}{#1}}
\newcommand{\svwtwo}[1]{\textcolor{black}{#1}}
\newcommand{\svwrev}[1]{\textcolor{black}{#1}}
\newcommand{\svwrevtwo}[1]{\textcolor{black}{#1}}
\theoremstyle{definition}
\newtheorem{theorem}{Theorem}[section]
\newtheorem*{theorem*}{Theorem}
\newtheorem{proposition}[theorem]{Proposition}
\newtheorem{lemma}[theorem]{Lemma}
\newtheorem{corollary}[theorem]{Corollary}
\newtheorem{definition}[theorem]{Definition}
\newtheorem{example}[theorem]{Example}
\theoremstyle{remark}
\numberwithin{equation}{section}
\newcommand{\bQ}{\mathbb{Q}}
\newcommand{\suchthat}{\,:\,}
\newcommand{\spam}{\operatorname{span}}
\newcommand{\Sym}{\ensuremath{\operatorname{Sym}}}
\newcommand{\Sn}{\mathfrak{S}}
\newcommand{\NSym}{\ensuremath{\operatorname{NSym}}}
\newcommand{\nch}{\mathbf{h}}         	
\newcommand{\ncr}{\mathbf{r}}           	
\newcommand{\jota}{\mathcal{J}}    
\newcommand{\NCSym}{\ensuremath{\operatorname{NCSym}}}
\newcommand{\slashp}{\mid}
\newcommand{\JT}{\mathrm{JT}}
\newcommand{\sh}{\mathrm{sh}}
\newcommand{\bdet}{\bm{\mathrm{det}}}
\newcommand{\id}{\mathrm{id}}
\newcommand{\st}{s^t}
\newcommand{\sort}{{\rm sort}}
\newcommand{\hig}{{\rm ht}}
\newcommand{\exch}{{\xi}}
\newlength\cellsize \setlength\cellsize{15\unitlength}
\newcommand\cellify[1]{\def\thearg{#1}\def\nothing{}%
\ifx\thearg\nothing
\vrule width0pt height\cellsize depth0pt\else
\hbox to 0pt{\usebox2\hss}\fi%
\vbox to 15\unitlength{
\vss
\hbox to 15\unitlength{\hss$#1$\hss}
\vss}}
\newcommand\tableau[1]{\vtop{\let\\=\cr
\setlength\baselineskip{-16000pt}
\setlength\lineskiplimit{16000pt}
\setlength\lineskip{0pt}
\halign{&\cellify{##}\cr#1\crcr}}}
\newcommand\expath[1]{%
\hbox to 0pt{\usebox3\hss}%
\vbox to 15\unitlength{
\vss
\hbox to 15\unitlength{\hss$#1$\hss}
\vss}}
\newcommand\bas[1]{\omit \vbox to \cellsize{ \vss \hbox to \cellsize{\hss$#1$\hss} \vss}}
\begin{document}

\title[Schur functions in $\NCSym$]{Schur functions in noncommuting variables}

\author{Farid Aliniaeifard}
\address{
 Department of Mathematics,
 University of British Columbia,
 Vancouver BC V6T 1Z2, Canada}
\email{farid@math.ubc.ca}

\author{Shu Xiao Li}
\address{
School of Mathematical Sciences,
Dalian University of Technology,
Dalian Liaoning 116024, P.R. China}
\email{lishuxiao@dlut.edu.cn}

\author{Stephanie van Willigenburg}
\address{
 Department of Mathematics,
 University of British Columbia,
 Vancouver BC V6T 1Z2, Canada}
\email{steph@math.ubc.ca}

\thanks{All authors were supported  in part by the National Sciences and Engineering Research Council of Canada.}
\subjclass[2010]{05E05, 05E10, 16T30, 20C30}
\keywords{Littlewood-Richardson rule, \FA{noncommutative symmetric function}, noncommuting variables, Schur function, skew Schur function, Specht module, symmetric function}

\begin{abstract} In 2004 Rosas and Sagan asked whether there was a way to define a basis in the algebra of symmetric functions in noncommuting variables, $\NCSym$, having properties analogous to the classical  Schur functions. \svwtwo{This was because they had constructed a partial such set that was not a basis. We answer their} question by defining Schur functions in noncommuting variables using a noncommutative analogue of the Jacobi-Trudi determinant. Our Schur functions in $\NCSym$ map to classical Schur functions under commutation, and a subset of them indexed by set partitions forms a basis for $\NCSym$.  \svwrevtwo{Amongst} other properties, Schur functions in $\NCSym$ also satisfy a noncommutative analogue of the product rule for classical Schur functions in terms of skew Schur functions.

We also show how Schur functions in $\NCSym$ are related to Specht modules, and naturally refine the Rosas-Sagan Schur functions. Moreover, by generalizing Rosas-Sagan Schur functions to skew Schur functions in the natural way, we prove noncommutative analogues of the Littlewood-Richardson rule and coproduct rule for them. \FA{Finally, we relate our functions to noncommutative symmetric functions by proving a subset of our functions are natural extensions of noncommutative ribbon Schur functions, and immaculate functions indexed by integer partitions.}
\end{abstract}

\maketitle
\tableofcontents

\section{Introduction}\label{sec:intro}  
Symmetric functions in noncommuting variables, $\NCSym$, were introduced in 1936 by Wolf \cite{Wolf} who produced a noncommutative analogue of the fundamental theorem of symmetric functions. Her work was generalized in 1969 by Bergman and Cohn \cite{BC} but there were few further discoveries until 2004 when Rosas and Sagan \cite{RS} substantially advanced the area, discovering numerous noncommutative analogues of classical concepts in symmetric function theory, such as the RSK map and the $\omega$ involution. In particular, they gave noncommutative analogues of the bases of monomial, power sum, elementary, and complete homogeneous symmetric functions, each analogue mapping to its (scaled) symmetric function analogue under the projection map that lets the variables commute. 

\sagan{Rosas and Sagan did define \svwtwo{a partial set of} Schur functions in noncommuting variables, which projected to scaled classical Schur functions.  But they were not a basis for NCSym since they were indexed by integer, rather than set, partitions.} This led them to ask in \cite[Section 9]{RS} whether there is a way to define functions indexed by set partitions having properties analogous to the classical Schur functions. In answer, in 2006 Bergeron, Hohlweg, Rosas and Zabrocki \cite{BHRZ} suggested a possible candidate, the $\mathbf{x}$-basis, which corresponds to the classes of simple modules of the Grothedieck bialgebra of the semi-tower of partition lattice algebras, though to date no concrete connection to \svw{classical} Schur functions has been made. 

Since then much of the algebraic structure of $\NCSym$ has been revealed, including by Bergeron, Reutenauer, Rosas and Zabrocki   who introduced a natural Hopf algebra structure on $\NCSym$ \cite{BRRZ},  Lauve and Mastnak who then computed the antipode \cite{LauveM}, and Bergeron and Zabrocki who proved that $\NCSym$ was free and cofree \cite{BZ}. Connecting $\NCSym$ to combinatorics, Can and Sagan proved that $\NCSym$ was isomorphic to the algebra of rook placements \cite{CanSagan}, and the generalization of the chromatic symmetric function of Stanley \cite{Stan95} to $\NCSym$ by Gebhard and Sagan \cite{GebSag} enabled them to resolve cases of the celebrated \svwrev{$(\textbf{3}+\textbf{1})$-free} Conjecture of Stanley and Stembridge \cite{StanStem}. This generalization was also used by Dahlberg to resolve further cases of the \svwrev{$(\textbf{3}+\textbf{1})$-free} Conjecture of Stanley and Stembridge  \cite{Dladders}. Meanwhile in representation theory, $\NCSym$ arises in the supercharacter theory of all unipotent upper-triangular matrices over a finite field \cite{28authors, Thiem}.

However, the question of the existence of a basis of $\NCSym$ with properties analogous to \svw{classical} Schur functions, which maps to (scaled) Schur functions under the projection map remained open. This was despite the flourishing area of Schur-like functions pioneered by the quasisymmetric Schur functions of Haglund, Luoto, Mason and \svwrevtwo{van Willigenburg} \cite{QS}, and followed by discoveries of row-strict quasisymmetric Schur functions \cite{mason-remmel}, Young quasisymmetric Schur functions \cite{LMvW}, noncommutative Schur functions \cite{BLvW} and immaculate functions \cite{BBSSZ, XXL},  quasisymmetric Schur $Q$-functions \cite{jingli},  quasi-Grothendieck polynomials \cite{monical} and quasisymmetric Macdonald polynomials \cite{Corteeletal}. 

In this paper we resolve the question of Rosas and Sagan by introducing Schur functions in noncommuting variables, which map to the classical Schur functions under the projection map, yield a basis for $\NCSym$ indexed by set partitions, exhibit many properties analogous to classical Schur functions, and furthermore naturally refine the Rosas-Sagan Schur functions. More precisely, our paper is structured as follows.

In Section~\ref{sec:background} we recall the relevant background. Then in Section~\ref{sec:source} we define the genesis for our Schur functions in $\NCSym$, the source Schur functions, in Definition~\ref{def:sourceskew} and show they satisfy an analogue of the \svw{classical} Schur function product rule in Theorem~\ref{the:prod} that is later generalized in Proposition~\ref{prop:prod}. Generalizing these functions in Section~\ref{sec:schur}, we define our skew Schur functions in noncommuting variables in Definition~\ref{def:ncschurs} and use this to identify the Schur basis of $\NCSym$ in Theorem~\ref{the:ncschur}, which leads to further Schur-like bases through transpose and permutation actions in Theorem~\ref{the:transpose} and Corollary~\ref{cor:permbases}. In each of these cases we discover that our functions map naturally to (skew) Schur functions when we let the variables commute. Moreover we relate our functions to Specht modules in Subsection~\ref{subsec:tabloid}. \svw{In Section~\ref{sec:RS}} we prove that our (skew) Schur functions in noncommuting variables naturally refine the Rosas-Sagan (skew) Schur functions in Theorem~\ref{the:RSrefines} using the \FA{Lindstr\"om-Gessel-Viennot} swap, and prove noncommutative analogues of the Littlewood-Richardson rule and coproduct rule for them in Propositions~\ref{prop:RSLR} and \ref{prop:RScoprod}. \FA{Lastly, in Section~\ref{sec:NSym} we show our functions naturally extend immaculate functions indexed by integer partitons and noncommutative ribbon Schur functions.}

\section{Background}\label{sec:background}  
We begin by recalling the combinatorics and algebra we will need before proving some small but useful lemmas.

Given a positive integer $n$, we say an \emph{integer partition} $\lambda = \lambda _1 \lambda _2 \cdots \lambda _{\ell(\lambda)}$ of $n$ is a \svwrev{weakly decreasing} list of positive integers whose sum is $n$. We denote this by $\lambda \vdash n$, call the $\lambda _i$ the \emph{parts} of $\lambda$, $\ell(\lambda)$ the \emph{length} of $\lambda$ and $n$ the \emph{size} of $\lambda$. For convenience \svwrev{we denote} by $\emptyset$ the unique empty \svw{integer} \svwrevtwo{partition of}  0. If $i$ appears in $\lambda$ exactly $r_i$ times for $1\leq i \leq n$, then we can also write $\lambda = 1^{r_1}2^{r_2} \cdots n^{r_n}$. With this in mind we define
$$\lambda ! = \lambda _1 !\lambda _2 !\cdots \lambda _{\ell(\lambda)}! \qquad \lambda^! = {r_1}!{r_2}! \cdots  {r_n}!$$and the \emph{transpose} of $\lambda$ to be the integer partition
$$\lambda ^t = (r_1+r_2+\cdots + r_n)(r_2+\cdots + r_n)\cdots (r_n)$$with zeros removed.

\begin{example}\label{ex:partitions}
If $\lambda = 3221 = 1^12^23^14^05^06^07^08^0 \vdash 8$, then $\ell (\lambda) = 4$, $\lambda ! = 3!2!2!1! = 24$, $\lambda ^! = 1!2!1!0!0!0!0!0! = 2$ and $\lambda ^t = (1+2+1)(2+1)(1)=431$.
\end{example}

If $\lambda$ is an integer partition, then we say the \emph{diagram} of $\lambda$, also denoted by $\lambda$, is the array of left-justified boxes with $\lambda _i$ boxes in row $i$ from the top. Consider two integer partitions $\lambda \vdash n$ and \svw{$\mu \vdash m$} such that $\ell(\mu) \leq \ell(\lambda)$ and the parts satisfy $\mu_i \leq \lambda _i$ for all $1\leq i \leq \mu _{\ell(\mu)}$, which we denote by \svwtwo{$\mu \subseteq \lambda$.} We say that the \emph{skew diagram} $\lambda /\mu$ of \emph{size} $(n-m)$ is the array of boxes contained in $\lambda$ but not in $\mu$ when the array of boxes of $\mu$ \svwrevtwo{is} located in the top left corner of the array of boxes of $\lambda$. Note that $\lambda /\emptyset = \lambda$. Furthermore the \emph{concatenation} of $\lambda$ and $\mu$, denoted by $\lambda\cdot \mu$, is obtained from $\lambda$ and $\mu$ by aligning the rightmost column of $\mu$ immediately below the leftmost column of $\lambda$ . Similarly, the \emph{near concatenation} of $\lambda$ and $\mu$, denoted by $\lambda \odot \mu$, is obtained from $\lambda$ and $\mu$ by aligning the topmost row of $\mu$ immediately left of the bottommost row of $\lambda$.

\begin{example}\label{ex:skewdiags}
If $\lambda = 3221$ and $\mu = 21$, then their respective diagrams and skew diagram are
$$\lambda = \tableau{\ &\ &\ \\\ &\ \\\ &\ \\\  }\qquad \mu= \tableau{\ &\ \\\ }\qquad \lambda/\mu = \tableau{&&\ \\&\ \\\ &\ \\\  }$$where the latter is of size $8-3=5$. Furthermore their concatenation and near concatenation are, respectively, as follows.
$$\lambda \cdot \mu = \tableau{&\ &\ &\ \\&\ &\ \\&\ &\ \\&\ \\\  &\ \\\ } \qquad \lambda \odot \mu = 
\tableau{
&&\ &\ &\ \\
&&\ &\ \\
&&\ &\ \\
\ &\  &\ \\\ }$$
\end{example}

Given $[n] = \{1,2,\ldots , n\}$, we say a \emph{set partition} $\pi$ of $[n]$ is a family of disjoint nonempty subsets $B_1, B_2, \ldots, B_{\ell(\pi)}$ whose union is $[n]$. We denote this by
$$\pi = B_1/B_2/ \cdots /B_{\ell(\pi)} \vdash [n]$$call the $B_i$ the \emph{blocks} of $\pi$, $\ell(\pi)$ the \emph{length} of $\pi$ and $n$ the \emph{size} of $\pi$. For convenience we usually list the blocks by increasing least element and omit the set parentheses and commas of the blocks, \svwrev{and denote by $\emptyset$ the unique empty set partition of $[0]=\emptyset$.} For a finite set of integers $S$ we define $S+n = \{s+n : s\in S\}$. Given two set partitions $\pi \vdash [n]$ and $\sigma=  B_1/B_2/ \cdots /B_{\ell(\sigma)} \vdash [m]$ we say that their \emph{slash product} $\pi \slashp \sigma$ is
$$\pi \slashp \sigma = \pi/ B_1 +n/B_2 +n/ \cdots /B_{\ell(\sigma)} +n \vdash [n+m].$$

\begin{example}\label{ex:setpartitions}
If we take  the family of disjoint  sets $\{1,3,4\}, \{2,5\}, \{6\}, \{7,8\}$, then as a set partition we write
$$\pi = 134/25/6/78 \vdash [8].$$
If $\pi = 134/25 \vdash [5]$ and $\sigma = 1/23 \vdash [3]$, then $$\pi \slashp \sigma = 134/25/6/78 \vdash [8]$$and $\ell(\pi\slashp \sigma) =4.$
\end{example}

Observe that every set partition $\pi = B_1/B_2/\cdots / B_{\ell(\pi)}\vdash [n]$ determines a unique integer partition $\lambda (\pi)\vdash n$ via
$$\lambda (\pi) =  |B_1| |B_2| \cdots  |B_{\ell(\pi)}|$$with parts listed in weakly decreasing order. Conversely, every $\lambda = \lambda _1 \lambda _2 \cdots \lambda _{\ell(\lambda)} \vdash n$ determines a natural set partition $[\lambda] \vdash [n]$ via
\begin{align*}[\lambda] &= 12\cdots \lambda _1/(\lambda _1 +1) \cdots (\lambda_1+\lambda _2) / \cdots / \left(\sum _{i=1}^{\ell(\lambda)-1}\lambda _i \right)+1\cdots n\\
&=[\lambda _1] \slashp [\lambda _2] \slashp \cdots \slashp [\lambda _{\ell(\lambda)}].
\end{align*}

\begin{example}\label{ex:lambdatopi}
We have that
$$\lambda(134/25/6/78)=3221$$and
\begin{align*}[3221]&= 123/45/67/8=123\slashp 12 \slashp 12 \slashp 1=[3]\slashp [2] \slashp [2]\slashp [1].
\end{align*}
\end{example}

We now turn our attention to  \FA{two Hopf algebras} that will be of interest to us. The first of these is the graded \emph{Hopf algebra of symmetric functions}, \svw{$\Sym$,}
$$\Sym = \Sym ^0 \oplus \Sym ^1 \oplus \cdots \subset \bQ [[x_1, x_2, \ldots ]]$$where \sagan{$[[\cdot ]]$ means that the variables commute,} $\Sym ^0 = \spam \{1\}$ and the $n$th graded piece for $n\geq 1$ has the following renowned bases
\svwtwo{$$\begin{array}{rclcl}
\Sym ^n&=& \spam\{ m_\lambda \suchthat \lambda\vdash n\} &=& \spam\{ p_\lambda \suchthat \lambda\vdash n\}\\
&=& \spam\{ e_\lambda \suchthat \lambda\vdash n\} &=& \spam\{  h_\lambda \suchthat \lambda\vdash n\}\\
&=& \spam\{ s_\lambda \suchthat \lambda\vdash n\} &&
\end{array}$$}where these functions are defined as follows, \svw{given an integer partition} $\lambda = \lambda_1\lambda_2\cdots \lambda_{\ell(\lambda)}\vdash n$.

The \emph{monomial symmetric function}, $m_\lambda$, is given by
$$m_\lambda = \sum x_{i_1}^{\lambda _1}x_{i_2}^{\lambda _2}\cdots x_{i_{\ell(\lambda)}}^{\lambda _{\ell(\lambda)}}$$summed over distinct monomials and the $i_j$ are also distinct.

\begin{example}\label{ex:monomials}
$m_{21} = x_1^2x_2 + x_2^2x_1+ \cdots$
\end{example}

The \emph{power sum symmetric function}, $p_\lambda$, is given by
$$p_\lambda = p_{\lambda _1}p_{\lambda _2}\cdots p_{\lambda _{\ell(\lambda)}}$$where \svwtwo{$p_{i} = x_1^{i}+x_2^{i}+\cdots$.}

\begin{example}\label{ex:powers}
$p_{21} = (x_1^2+x_2^2+ \cdots)(x_1+x_2+\cdots)$
\end{example}

The \emph{elementary symmetric function}, $e_\lambda$, is given by
$$e_\lambda = e_{\lambda _1}e_{\lambda _2}\cdots e_{\lambda _{\ell(\lambda)}}$$where \svwtwo{$e_{i} = \sum_{j_1<j_2<\cdots <j_{i}} x_{j_1}x_{j_2}\cdots x_{j_{i}}$.}

\begin{example}\label{ex:elementaries}
$e_{21} = (x_1x_2+x_2x_3+ \cdots)(x_1+x_2+\cdots)$
\end{example}

The \emph{complete homogeneous symmetric function}, $h_\lambda$, is given by
$$h_\lambda = h_{\lambda _1}h_{\lambda _2}\cdots h_{\lambda _{\ell(\lambda)}}$$where \svwtwo{$h_{i} = \sum_{j_1\leq j_2\leq\cdots \leq j_{i}} x_{j_1}x_{j_2}\cdots x_{j_{i}}$.}

\begin{example}\label{ex:completes}
$h_{21} = (x_1x_2+x_1^2+ \cdots)(x_1+x_2+\cdots)$
\end{example}

Our final basis consists of \SXL{Schur} functions, and we use the complete homogeneous symmetric functions and elementary symmetric functions to give two ways to define them, which we will use later. The \emph{Schur function}, $s_\lambda$, is given by the \emph{Jacobi-Trudi determinant} and its dual:
\begin{equation}\label{eq:JT}
s_\lambda = \det \left( h_{\lambda _i  - i + j} \right) _{1\leq i,j \leq \ell(\lambda)} = \det \left( e_{(\lambda^t) _i - i + j} \right) _{1\leq i,j \leq \ell(\lambda^t)}
\end{equation}where $h_0=e_0=1$ and any function with a negative index equals $0$. Given a skew diagram $\lambda / \mu$ we similarly define the \emph{skew Schur function} $s_{\lambda/\mu}$ to be
\begin{equation}\label{eq:JTskew}
s_{\lambda/\mu} = \det \left( h_{\lambda _i  -\mu_j - i + j} \right) _{1\leq i,j \leq \ell(\lambda)} = \det \left( e_{(\lambda^t) _i -(\mu^t)_j - i + j} \right) _{1\leq i,j \leq \ell(\lambda^t)}
\end{equation}where we set $\mu_j = 0$ for $\ell(\mu) <j \leq \ell(\lambda)$.

\begin{example}\label{ex:schur}
$s_{21}= \det \begin{pmatrix}h_2&h_3\\h_0&h_1\end{pmatrix} = h_{21} -h_3\qquad s_{22/1}= \det \begin{pmatrix}h_1&h_3\\h_0&h_2\end{pmatrix} = h_{12}-h_3 = h_{21} -h_3$
\end{example}

We now turn our attention to the second Hopf algebra of interest to us, the graded \emph{Hopf algebra of symmetric functions in noncommuting variables}, $\NCSym$,
$$\NCSym  = \NCSym ^0 \oplus \NCSym ^1 \oplus \cdots \subset \bQ \langle \langle x_1, x_2, \ldots \rangle\rangle$$where \sagan{$\langle \langle\cdot \rangle\rangle$ means that the variables do not commute,} $\NCSym ^0 = \spam \{1\}$ and {the} $n$th graded piece for $n\geq 1$ has the following bases \svw{\cite{RS}}, known respectively as the $n$th graded piece of the \emph{$m$-, $p$-, $e$-, $h$-basis of $\NCSym$},
\svwtwo{$$\begin{array}{rclcl}
\NCSym ^n&=& \spam\{ m_\pi \suchthat \pi\vdash [n]\} &=& \spam\{ p_\pi \suchthat \pi\vdash [n]\}\\
&=& \spam\{ e_\pi \suchthat \pi\vdash [n]\} &=& \spam\{  {h_\pi} \suchthat \pi\vdash [n]\}
\end{array}$$}where these functions are defined as follows, given a set partition $\pi \vdash [n]$.

The \emph{monomial symmetric function in $\NCSym$}, $m_\pi$, is given by 
$$m_\pi = \sum _{(i_1, i_2, \ldots , i_n)} x_{i_1}x_{i_2} \cdots x_{i_n}$$summed over all tuples $(i_1, i_2, \ldots , i_n)$ with $i_j=i_k$ if and only if $j$ and $k$ are in the same block of $\pi$.

\begin{example}\label{ex:mpi}
$m_{13/2}=x_1x_2x_1+x_2x_1x_2+x_1x_3x_1+x_3x_1x_3+x_2x_3x_2+x_3x_2x_3+\cdots$
\end{example}

\sagan{For the next two definitions, the implication only goes one way.}

The \emph{power sum symmetric function in $\NCSym$}, $p_\pi$, is given by 
$$p_\pi = \sum _{(i_1, i_2, \ldots , i_n)} x_{i_1}x_{i_2} \cdots x_{i_n}$$summed over all tuples $(i_1, i_2, \ldots , i_n)$ with $i_j=i_k$ if  $j$ and $k$ are in the same block of $\pi$.

\begin{example}\label{ex:ppi}
$p_{13/2}=x_1x_2x_1+x_2x_1x_2+\cdots + x_1^3+x_2^3 +\cdots $
\end{example}

The \emph{elementary symmetric function in $\NCSym$}, $e_\pi$, is given by 
$$e_\pi = \sum _{(i_1, i_2, \ldots , i_n)} x_{i_1}x_{i_2} \cdots x_{i_n}$$summed over all tuples $(i_1, i_2, \ldots , i_n)$ with $i_j\neq i_k$ if  $j$ and $k$ are in the same block of $\pi$.

\begin{example}\label{ex:epi}
$e_{13/2}= {x_1x_1x_2+x_1x_2x_2+x_2x_2x_1+x_2x_1x_1}+\cdots + x_1x_2x_3+x_2x_3x_4 +\cdots$
\end{example}

The definition for the complete homogeneous symmetric functions in $\NCSym$ requires a few more constituents. Define an order on the set partitions of $[n]$ by refinement, that is, for two
set partitions $\pi,\sigma\vdash [n]$, we say that $\pi\leq \sigma$ if every block of $\pi$ is contained in some block of $\sigma$. Then \sagan{this gives a lattice and} the greatest
lower bound operation is denoted by $\wedge$.
With this in mind, the \svw{\emph{complete homogeneous symmetric function in $\NCSym$}}, $h_\pi$, is given by 
$$h_\pi=\sum_{\sigma} \lambda (\sigma \wedge \pi)! m_{\sigma}.$$ 

\begin{example}\label{ex:hpi}
$h_{13/2}= 2 m_{123} + m_{12/3} + m_{1/23} + 2 m_{13/2} + m_{1/2/3}$
\end{example}

However, the following new interpretation will be more useful to us, \svwtwo{where $\Sn _n$ is the symmetric group of size $n$ and $\mathbb{N}= \{1, 2, \ldots \}$.}

\begin{lemma}\label{lem:h}
Let $\pi=\pi_1/\pi_2/\cdots/\pi_{\ell(\pi)}\vdash [n]$. Then 
\begin{align}{\label{h-mon}}
h_{\pi}=\sum_{\eta} \sum_{(i_1,i_2,\ldots,i_n)  } x_{i_{\eta(1)}}x_{i_{\eta(2)}}\cdots x_{i_{\eta(n)}}
\end{align} 
where
\begin{enumerate}[1)]
\item   the first sum is over all $\eta\in \Sn_n$ that fixes the blocks of $\pi$,
\item   the second sum is over all $(i_1,i_2,\ldots,i_n)\in \mathbb{N}^n$ such that if $j$ and $k$ are in the same block of $\pi$ with $j<k$, then $i_j\leq i_k$.
\end{enumerate} 
\end{lemma}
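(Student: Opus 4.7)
The plan is to expand both the original definition $h_\pi = \sum_\sigma \lambda(\sigma\wedge\pi)!\, m_\sigma$ and the right-hand side of \eqref{h-mon} as formal sums of monomials in the $x_i$, then match coefficients monomial-by-monomial. Each monomial $x_{j_1}\cdots x_{j_n}$ appearing on the right-hand side of \eqref{h-mon} (via $j_l := i_{\eta(l)}$) determines a unique set partition $\sigma\vdash[n]$ by the equality pattern $l\sim l' \iff j_l = j_{l'}$, and as $(j_1,\ldots,j_n)$ ranges over all tuples with this equality pattern $\sigma$, we pick up exactly the distinct monomials occurring in $m_\sigma$. So it suffices to fix $\sigma$ and one such tuple $(j_1,\ldots,j_n)$ and to count the number of valid pairs $(\eta,(i_1,\ldots,i_n))$ from the double sum that produce this monomial. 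I want to show this count equals $\lambda(\sigma\wedge\pi)!$.

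For any admissible $\eta\in\Sn_n$, the requirement $i_{\eta(l)}=j_l$ for all $l$ forces $i_m = j_{\eta^{-1}(m)}$, so $\eta$ determines $(i_1,\ldots,i_n)$. Since $\eta$ stabilizes every block $B$ of $\pi$ setwise, so does $\eta^{-1}$, and therefore the multiset $\{i_l:l\in B\}$ coincides with $\{j_l:l\in B\}$. Condition 2) requires $l\mapsto i_l$ to be weakly increasing along the natural order on $B$. Since any multiset has a unique weakly increasing arrangement, the number of choices of $\eta|_B\in\Sn_B$ that produce this arrangement is exactly the product of factorials $\prod_c c!$, where the $c$'s run through the multiplicities of the distinct values in $\{j_l:l\in B\}$.

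Now the key identification: inside $B$, two indices $l,l'$ satisfy $j_l=j_{l'}$ iff $l$ and $l'$ lie in the same block of $\sigma$, so the distinct-value multiplicities in $\{j_l:l\in B\}$ are exactly the sizes $|B\cap D|$ as $D$ ranges over the blocks of $\sigma$ that meet $B$. These intersections $B\cap D$ are precisely the blocks of $\sigma\wedge\pi$ contained in $B$. Multiplying over all blocks of $\pi$, and using that every block of $\sigma\wedge\pi$ is contained in exactly one block of $\pi$, the block-by-block counts assemble into the product of factorials of all block sizes of $\sigma\wedge\pi$, which is $\lambda(\sigma\wedge\pi)!$ by definition. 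Summing over monomials of equality pattern $\sigma$ yields $\lambda(\sigma\wedge\pi)!\, m_\sigma$, and summing over $\sigma$ recovers $h_\pi$.

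The main obstacle is the final block-by-block bookkeeping: convincingly passing from ``values of $j$ repeated in $B$'' to ``blocks of $\sigma\wedge\pi$ in $B$'' and verifying that the factorial product correctly aggregates into $\lambda(\sigma\wedge\pi)!$ across all blocks of $\pi$. Everything else is a direct unwinding of the monomial definitions of $h_\pi$ and $m_\sigma$ combined with the observation that $\eta$ uniquely reconstructs $(i_1,\ldots,i_n)$ from the target monomial.
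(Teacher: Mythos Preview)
Your proof is correct and follows essentially the same strategy as the paper's: fix a set partition $\sigma$, pick a representative monomial, and count pairs $(\eta,(i_1,\dots,i_n))$ in the double sum yielding that monomial. Your counting argument is in fact more detailed than the paper's, which simply asserts that the count equals the number of permutations fixing all blocks of both $\sigma$ and $\pi$ (the same number $\lambda(\sigma\wedge\pi)!$, though not literally the same set of $\eta$'s), whereas you explicitly compute $\prod_B\prod_D |B\cap D|!$ via the unique weakly increasing arrangement on each block.
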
 
\begin{proof}
Consider  $\sigma=\sigma_1/\sigma_2/\cdots/\sigma_{\ell(\sigma)}\vdash [n]$. Fix an arbitrary term $x^{\sigma}=x_{i_1}x_{i_2}\cdots x_{i_n}$ where $i_j=i_k$ if and only if $j$ and $k$ are in the same block of $\sigma$. We write $i_j=a(p)$ if $j\in\sigma_p$, \svwrev{namely, $a(p)$ is the subscript of all variables appearing in the positions indexed by block $\sigma _p$.}

\svwtwo{Note that the coefficient of $m_\sigma$ in the expansion of $h_\pi$ in terms of the $m$-basis of $\NCSym$, $\lambda(\sigma \wedge \pi)!$,  is equal to the coefficient of $x^{\sigma}$ in the polynomial expansion of $h_{\pi}$ by the original definition of $h_{\pi}$ above. Therefore, if we show that the coefficient of $x^{\sigma}$ in 
Equation~\eqref{h-mon} is equal to $\lambda(\sigma \wedge \pi)!$, \svwrevtwo{then} we are done.}

\svwtwo{
In Equation~\eqref{h-mon}, the number of $x_{i_{\eta(1)}}x_{i_{\eta(2)}}\cdots x_{i_{\eta(n)}}$ that are equal to $x^{\sigma}$ is equal to the number of pairs $$(\eta,(i_1,i_2,\ldots,i_n))$$where
\begin{enumerate}[1)]
\item $\eta\in \Sn_n$ fixes the blocks of $\pi$, 
\item  $(i_1,i_2,\ldots,i_n)\in \mathbb{N}^n$ such that if $j$ and $k$ are in the same block of $\pi$ with $j<k$, then $i_j\leq i_k$, and
\item $i_{\eta(j)}=a(p)$ if and only if $\eta(j)\in \sigma_{p}$. 
\end{enumerate}
Note that only one tuple satisfies 3), which is $(i_{\eta(1)},i_{\eta(2)},\ldots,i_{\eta(n)})$  where $ i_{\eta(j)}=a(p)$ if and only if $\eta(j)\in \sigma_{p}$. Then the number of viable $\eta$ can be calculated as follows. For each permutation that fixes the blocks of $\pi$, it is a viable $\eta$ if it further only permutes all $ i_{\eta(j)}=a(p)$ in a given block of $\pi$ amongst themselves. That is, $\eta$ is viable if and only if $\eta$ fixes the blocks of $\pi \wedge \sigma$. Then, the number of viable $\eta$, subject to the ordering in 2), is $\lambda(\pi \wedge \sigma)!= \lambda(\sigma \wedge \pi)!$, and our result follows.}
\end{proof} 

Relating $\NCSym$ to $\Sym$ is the \emph{projection map} 
$$\rho \suchthat \bQ \langle \langle x_1, x_2, \ldots \rangle \rangle \rightarrow \bQ [[ x_1, x_2, \ldots ]]$$that simply lets the variables commute and yields the following result.

\begin{lemma}\label{lem:RSrho} \cite[Theorem 2.1]{RS} Let $\pi$ be a set partition.
\begin{enumerate}[1)]
\item $\rho(m_\pi) = \lambda (\pi) ^! m_{\lambda(\pi)}$
\item $\rho(p_\pi) = p_{\lambda(\pi)}$
\item $\rho(e_\pi) = \lambda (\pi)! e_{\lambda(\pi)}$
\item $\rho(h_\pi) = \lambda (\pi)! h_{\lambda(\pi)}$
\end{enumerate}
\end{lemma}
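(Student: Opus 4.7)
My plan is to prove each of the four identities directly from the definition of $\rho$ (which simply commutes the variables) and the $\NCSym$ definitions of $m_\pi$, $p_\pi$, $e_\pi$, $h_\pi$, keeping careful track of how many noncommutative monomials collapse onto each commutative monomial. Write $\pi = B_1/B_2/\cdots/B_{\ell(\pi)}$ and $\lambda = \lambda(\pi) = 1^{r_1}2^{r_2}\cdots n^{r_n}$ throughout.

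For part (1), a tuple $(i_1,\dots,i_n)$ contributing to $m_\pi$ assigns one distinct value $j_k$ to each block $B_k$, so after commuting, $x_{i_1}\cdots x_{i_n}$ becomes $x_{j_1}^{|B_1|}\cdots x_{j_{\ell(\pi)}}^{|B_{\ell(\pi)}|}$. Each commutative monomial appearing in $m_\lambda$ arises from exactly $r_1! r_2! \cdots r_n! = \lambda^!$ such block-to-value assignments (permute blocks of equal size). For part (2), I would observe that the constraint defining $p_\pi$ is block-independent, so $p_\pi$ factors under $\rho$: choosing a value for each block of size $s$ independently contributes $p_s = x_1^s + x_2^s + \cdots$, giving $\rho(p_\pi) = \prod_k p_{|B_k|} = p_{\lambda}$. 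For part (3), the tuples for $e_\pi$ decouple across blocks as well; for each block of size $s_k$ one sums over ordered tuples of $s_k$ \emph{distinct} variables, and under commutation this is $s_k!\, e_{s_k}$. Multiplying over blocks yields $\prod_k s_k! e_{s_k} = \lambda! e_\lambda$.

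For part (4), I would use the new formula from Lemma~\ref{lem:h}:
\[
h_\pi=\sum_{\eta}\sum_{(i_1,\dots,i_n)} x_{i_{\eta(1)}}\cdots x_{i_{\eta(n)}}.
\]
Applying $\rho$ turns each noncommutative word $x_{i_{\eta(1)}}\cdots x_{i_{\eta(n)}}$ into the same commutative monomial $x_{i_1}\cdots x_{i_n}$, independent of $\eta$. The outer sum therefore contributes the factor $|\{\eta \in \Sn_n : \eta \text{ fixes each block of }\pi\}| = \prod_k |B_k|! = \lambda!$. The remaining inner sum runs over tuples that are weakly increasing on each block of $\pi$, and this again decouples as a product $\prod_k h_{|B_k|} = h_\lambda$ in the commutative variables, giving $\rho(h_\pi) = \lambda!\, h_\lambda$.

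The main bookkeeping obstacle in all four parts is isolating the correct combinatorial multiplicity, and the cleanest way is to carry out the block-by-block decoupling explicitly (trivial for $p_\pi$, $e_\pi$, $h_\pi$ since their defining constraints are per-block) and to argue the $r_i!$ factor for $m_\pi$ by the rearrangement of blocks of equal size. Part (4) is the only one that is not immediate from the original definition, which is precisely why I would invoke the tuple description of Lemma~\ref{lem:h} rather than the $m$-expansion $h_\pi = \sum_\sigma \lambda(\sigma\wedge\pi)!\, m_\sigma$ (which would force a more involved double-counting argument through part (1)).
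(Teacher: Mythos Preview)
The paper does not actually prove this lemma; it is stated with a citation to \cite[Theorem 2.1]{RS} and no proof environment follows. Your argument is therefore not comparable to anything in the paper itself, but it is correct as an independent proof. The block-by-block decoupling for $p_\pi$ and $e_\pi$ is the standard route, and your handling of the multiplicity $\lambda(\pi)^!$ in part~(1) is accurate. Invoking Lemma~\ref{lem:h} for part~(4) is a clean choice (and legitimate, since that lemma appears before this one and does not rely on~$\rho$); the alternative you mention---pushing the $m$-expansion $h_\pi=\sum_\sigma \lambda(\sigma\wedge\pi)!\,m_\sigma$ through part~(1)---is in fact how Rosas and Sagan argue in the cited reference, so your approach trades their double-counting for the monomial description you already have in hand.
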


There are two other maps that will be useful to us. \svwtwo{The first is the \emph{permutation map} \cite[p. 219]{RS} and \cite[p. 230]{GebSag}, which is an action on places (not variables) as follows. Given $\delta \in \Sn _n$ and a monomial of degree $n$ in noncommuting variables, define
$$\delta \circ (x_{i_1}x_{i_2} \cdots x_{i_n}) = x_{i_{\delta^{-1}(1)}}x_{i_{\delta^{-1}(2)}} \cdots x_{i_{\delta^{-1}(n)}}$$and extend linearly. In \cite[p. 219]{RS} they also noted that if $\pi$ is a basis element of any of the above bases of $\NCSym$ and $\delta$ is a permutation, then
\begin{equation}\label{eq:delta}
\delta \circ b_\pi = b_{\delta\pi}
\end{equation}where $\delta$ acts on set partitions in the natural way.} The second is the classical involution $\omega$ defined on $\Sym$, whose analogue in $\NCSym$ is also an involution denoted by $\omega$ \cite[p. 221]{RS} defined by
\begin{equation}\label{eq:omega}
\omega(h_\lambda)=e_\lambda \qquad \omega(h_\pi) = e_\pi.
\end{equation}

\svwtwo{We are now ready for some preliminary lemmas.}



\begin{lemma}\label{lem:omegaonp}
For set partitions $\pi$ and $\sigma$ we have that
$$\omega (p_\pi p _\sigma) = \omega(p_\pi)\omega(p_\sigma).$$
\end{lemma}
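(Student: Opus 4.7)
The strategy is to prove the stronger statement that $\omega$ is an algebra morphism on all of $\NCSym$, from which $\omega(p_\pi p_\sigma) = \omega(p_\pi)\omega(p_\sigma)$ is immediate. This parallels the proof of Lemma~\ref{lem:rhoonp}, which likewise rested on a general multiplicativity property of $\rho$.

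First I would record the product rules $h_\pi h_\sigma = h_{\pi \slashp \sigma}$ and $e_\pi e_\sigma = e_{\pi \slashp \sigma}$ for the $h$- and $e$-bases of $\NCSym$, where $\slashp$ denotes the slash product of set partitions. Both are standard and appear in \cite{RS}. For $e$ this is essentially transparent from the definition: the sum $e_\pi = \sum x_{i_1}\cdots x_{i_n}$ runs over tuples whose indices are distinct within each block of $\pi$, so multiplying by the analogous sum for $\sigma$ and reindexing $j_k \mapsto i_{n+k}$ produces exactly the sum defining $e_{\pi \slashp \sigma}$, because the slash product places the two block systems on disjoint index sets and imposes no cross constraints. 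The $h$-case is analogous and can be read off from Lemma~\ref{lem:h}.

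Given these product rules, the defining property $\omega(h_\pi) = e_\pi$ forces
$$\omega(h_\pi h_\sigma) = \omega(h_{\pi \slashp \sigma}) = e_{\pi \slashp \sigma} = e_\pi e_\sigma = \omega(h_\pi)\omega(h_\sigma),$$
so $\omega$ is multiplicative on all pairwise products of $h$-basis elements. Since $\{h_\pi\}$ is a basis of $\NCSym$, writing arbitrary $f = \sum_i \alpha_i h_{\pi_i}$ and $g = \sum_j \beta_j h_{\sigma_j}$, distributing the product $fg$, and then applying linearity of $\omega$ yields $\omega(fg) = \omega(f)\omega(g)$ in full generality. Specializing to $f = p_\pi$ and $g = p_\sigma$ completes the proof.

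The only real obstacle is supplying a clean verification of the $h$-basis product rule, as the $e$-case is immediate and the remaining step is linear algebra; however this verification is a routine consequence of Lemma~\ref{lem:h} (or may simply be quoted from Rosas-Sagan), so the actual write-up is short.
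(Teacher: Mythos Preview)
Your argument is correct and in fact proves the stronger statement that $\omega$ is an algebra homomorphism on all of $\NCSym$. It is, however, a genuinely different route from the paper's. The paper works entirely in the $p$-basis: it invokes $p_\pi p_\sigma = p_{\pi\slashp\sigma}$ and the explicit formula $\omega(p_\pi)=(-1)^\pi p_\pi$ from \cite{RS}, and then reads off
\[
\omega(p_\pi p_\sigma)=\omega(p_{\pi\slashp\sigma})=(-1)^{\pi+\sigma}p_{\pi\slashp\sigma}=(-1)^\pi p_\pi\,(-1)^\sigma p_\sigma=\omega(p_\pi)\omega(p_\sigma),
\]
which is a one-line computation given those two inputs. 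Your approach instead leverages the defining relation $\omega(h_\pi)=e_\pi$ together with the slash-product rules for the $h$- and $e$-bases; what it buys is the general multiplicativity of $\omega$, not just on $p$-products.

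One caution about logical order: within this paper the $h$-product rule $h_\pi h_\sigma=h_{\pi\slashp\sigma}$ is Lemma~\ref{lem:hmult}, and its proof \emph{uses} the very lemma you are proving (it deduces multiplicativity on $e$-products from Lemma~\ref{lem:omegaonp}). So you may not cite Lemma~\ref{lem:hmult}; you must establish $h_\pi h_\sigma=h_{\pi\slashp\sigma}$ independently. Your suggestion of deriving it directly from Lemma~\ref{lem:h} does work (the permutations fixing the blocks of $\pi\slashp\sigma$ factor as pairs, as do the weak-increase conditions), so the argument stands once you make that derivation explicit. Also, neither product rule is stated in \cite{RS} in this form---the paper cites \cite{Dladders} for the $e$-rule and proves the $h$-rule itself---so drop the parenthetical ``or may simply be quoted from Rosas--Sagan.''
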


\begin{proof}
Our result follows since
\svw{$$\omega (p_\pi p _\sigma) = \omega (p _{\pi \slashp \sigma}) = (-1)^{\svwrev{\pi\slashp\sigma}}p_{\pi \slashp \sigma} = (-1)^{\pi} p_{\pi} (-1)^{\sigma}p_{\sigma} =  \omega(p_\pi)\omega(p_\sigma)$$}where the first equality follows from \cite[Lemma 4.1 (i)]{BHRZ}, which states that $p_\pi p _\sigma = p_{\pi \slashp \sigma}$, and the second and fourth equalities follow from \cite[Theorem 3.5 (ii)]{RS}, which states that \svw{$\omega (p_\pi) = (-1)^\pi p_{\pi}$} where $(-1)^\pi$ is the sign of any permutation obtained \svwtwo{by replacing} each block of $\pi$ by a cycle.
\end{proof}

\begin{lemma}\label{lem:omegadeltacommute}
For a set partition $\pi \vdash [n]$ and permutation $\delta \in \Sn _n$ we have that
$$\omega \delta (p_\pi) = \delta \omega (p_\pi).$$
\end{lemma}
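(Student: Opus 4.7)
The plan is to unfold both sides using the two ingredients already recalled in the excerpt: the permutation action from \eqref{eq:delta}, which gives $\delta(p_\pi) = p_{\delta\pi}$, and the formula $\omega(p_\pi) = (-1)^\pi p_\pi$ from \cite[Theorem 3.5 (ii)]{RS} used in the previous lemma. Once both sides are in the form of a sign times $p_{\delta\pi}$, the statement reduces to the combinatorial fact that the sign $(-1)^\pi$ depends only on the multiset of block sizes of $\pi$, not on which elements populate the blocks.

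First I would compute the left-hand side. By \eqref{eq:delta} we have $\delta(p_\pi) = p_{\delta\pi}$, so $\omega\delta(p_\pi) = \omega(p_{\delta\pi}) = (-1)^{\delta\pi} p_{\delta\pi}$. Next I would compute the right-hand side similarly: $\omega(p_\pi) = (-1)^\pi p_\pi$, and applying $\delta$ (which is a linear map on $\NCSym$) gives $\delta\omega(p_\pi) = (-1)^\pi \delta(p_\pi) = (-1)^\pi p_{\delta\pi}$.

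The remaining task is to verify $(-1)^{\delta\pi} = (-1)^\pi$. Recall $(-1)^\pi$ is defined as the sign of any permutation obtained by replacing each block of $\pi$ by a cycle on its elements; such a permutation has cycle type $\lambda(\pi)$, so its sign is $\prod_i (-1)^{\lambda(\pi)_i - 1}$, which depends only on $\lambda(\pi)$. Since $\delta$ acts on $\pi$ by relabeling its elements, the block sizes are unchanged, i.e.\ $\lambda(\delta\pi) = \lambda(\pi)$, and hence $(-1)^{\delta\pi} = (-1)^\pi$. Combining these three steps yields the desired equality.

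There is no real obstacle here; the only point that needs a sentence of justification is the invariance of $(-1)^\pi$ under the permutation action, which is immediate from its definition through cycle type.
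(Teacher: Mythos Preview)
Your proof is correct and follows essentially the same approach as the paper: both apply \eqref{eq:delta} and \cite[Theorem 3.5 (ii)]{RS} to reduce the statement to the equality $(-1)^{\delta\pi} = (-1)^\pi$. The paper's version is slightly terser, writing $\omega(p_{\delta\pi}) = (-1)^\pi p_{\delta\pi}$ directly, whereas you make the invariance of the sign under relabeling explicit via the cycle-type argument.
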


\begin{proof} By Equation~\eqref{eq:delta} and \cite[Theorem 3.5 (ii)]{RS} described in the previous proof, we have that
$$\omega \delta (p_\pi) = \omega (p_{\delta\pi}) = (-1) ^\pi p_{\delta\pi} = \delta ((-1) ^\pi p_{\pi}) = \delta \omega (p_\pi).$$
\end{proof}

\begin{lemma}\label{lem:deltarhocommute} For a set partition $\pi \vdash [n]$ and permutation $\delta \in \Sn _n$ we have that
\SXL{$$\rho\delta (p_\pi) = \rho (p_\pi).$$}
\end{lemma}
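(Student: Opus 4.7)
The plan is to chain together the two facts we already have about how $\delta$ and $\rho$ act on the $p$-basis. First I would invoke Equation~\eqref{eq:delta} to rewrite
$$\rho\delta(p_\pi) = \rho(p_{\delta\pi}),$$
reducing the statement to a claim about the projection of a single $p$-basis element.

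Next, I would apply Lemma~\ref{lem:RSrho}(2) to both sides, which gives
$$\rho(p_{\delta\pi}) = p_{\lambda(\delta\pi)} \qquad \text{and} \qquad \rho(p_\pi) = p_{\lambda(\pi)}.$$
So the whole lemma reduces to the combinatorial observation that $\lambda(\delta\pi) = \lambda(\pi)$. This is immediate from the definition of the permutation action on set partitions: $\delta$ acts by relabeling elements, so the multiset of block sizes, and hence the underlying integer partition, is unchanged.

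There is essentially no obstacle here; the proof is a two-line bookkeeping check. The only thing to be slightly careful about is that $\rho$ is applied to a noncommutative expression on the left and yields a genuine (commutative) symmetric function on the right, but this is already absorbed into the statement of Lemma~\ref{lem:RSrho}(2).
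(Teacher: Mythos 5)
Your proposal is correct and is essentially the paper's own proof: both apply Equation~\eqref{eq:delta} to write $\rho\delta(p_\pi)=\rho(p_{\delta\pi})$, then use Lemma~\ref{lem:RSrho}(2) together with the observation that the permutation action preserves block sizes, so $\lambda(\delta\pi)=\lambda(\pi)$. No gaps.
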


\begin{proof} 
By Equation~\eqref{eq:delta} and Lemma~\ref{lem:RSrho}, we have that
\SXL{$$\rho \delta (p_\pi) = \rho (p_{\delta\pi}) =  p_{\lambda(\pi)} =  \rho (p_\pi)$$}since \svw{the action of a permutation on a set partition maintains the block sizes.}\end{proof}

While the power sum functions in  $\NCSym$ have been useful for establishing key lemmas for $\NCSym$, our main focus will be the complete homogeneous symmetric functions in $\NCSym$, and we \svwrev{recall} a critical result for them next in analogy to \cite[Lemma 4.1]{BHRZ}. \svw{We will use it frequently and hence without citation whenever we multiply complete homogeneous symmetric functions in $\NCSym$.}

\begin{lemma}\cite[Corollary 2.41]{ABB}\label{lem:hmult}
For set partitions $\pi$ and $\sigma$ we have that
$$h_\pi h_\sigma = h_{\pi\slashp\sigma}.$$
\end{lemma}

One last definition we need is a \emph{noncommutative analogue of Leibniz' determinantal formula} for any matrix $A=(a_{ij}) _{1\leq i,j\leq n}$ with noncommuting entries $a_{ij}$, which we define to be
\begin{equation}\label{eq:leibniz}
\bdet (A) = \sum _{\varepsilon \in \Sn _n} \mathrm{sgn} (\varepsilon) a_{1\varepsilon (1)}a_{2\varepsilon (2)}\cdots a_{n\varepsilon (n)}
\end{equation}
that takes the product of the entries from the top row to the bottom row, and $\mathrm{sgn} (\varepsilon)$ is the sign of permutation $\varepsilon$.

\section{Source Schur functions}\label{sec:source}  
Before we define our Schur functions in noncommuting variables, we will define functions that will be their genesis.

\begin{definition}\label{def:sourceskew}
Let $\lambda / \mu$ be a skew diagram. Then the \emph{source skew Schur function in noncommuting variables} $s_{[\lambda/\mu]}$ is defined to be
\begin{equation}
\label{eq:skewncschur}
s_{[\lambda/\mu]} = \bdet \left( \frac{1}{(\lambda _i -\mu _j - i +j)! } h_{[\lambda _i -\mu_j - i + j]}\right) _{1\leq i,j \leq \ell(\lambda)}\end{equation}where we set $\mu_j = 0$ for all $\ell(\mu)< j \leq \ell(\lambda)$, \svw{$h_{[0]}=h_\emptyset = 1$ and any function with a negative index equals 0.} When $\mu = \emptyset$, we call $s_{[\lambda]}$ a \emph{source Schur function in noncommuting variables}.
\end{definition}

\begin{example}\label{ex:sourceskew} The source  Schur function in noncommuting variables $s_{[21]}$ is
\begin{align*}s_{[21]} &  = \svwrev{\bdet \begin{pmatrix} \frac{1}{2!} h_{[2]}& \frac{1}{3!} h_{[3]}\\
\frac{1}{0!} h_{[0]}& \frac{1}{1!} h_{[1]}
\end{pmatrix}} = \bdet \begin{pmatrix} \frac{1}{2!} h_{12}& \frac{1}{3!} h_{123}\\
\frac{1}{0!} h_{\emptyset}& \frac{1}{1!} h_{1}
\end{pmatrix}\\
&= \frac{1}{2!} h_{12} \frac{1}{1!} h_{1} - \frac{1}{3!} h_{123}\frac{1}{0!} h_{\emptyset} = \svw{\frac{1}{2} h_{12 \slashp 1} - \frac{1}{6} h_{123}} = \frac{1}{2} h_{12/3} - \frac{1}{6} h_{123}.
\end{align*}
Meanwhile the source skew Schur function in noncommuting variables $s_{[22/1]}$ is
\begin{align*}s_{[22/1]} &=  \svwrev{\bdet \begin{pmatrix} \frac{1}{1!} h_{[1]}& \frac{1}{3!} h_{[3]}\\
\frac{1}{0!} h_{[0]}& \frac{1}{2!} h_{[2]}
\end{pmatrix}} =  \bdet \begin{pmatrix} \frac{1}{1!} h_{1}& \frac{1}{3!} h_{123}\\
\frac{1}{0!} h_{\emptyset}& \frac{1}{2!} h_{12}
\end{pmatrix}\\
&= \frac{1}{1!} h_{1}\frac{1}{2!} h_{12}  - \frac{1}{3!} h_{123}\frac{1}{0!} h_{\emptyset}  = \svw{\frac{1}{2} h_{1\slashp 12} - \frac{1}{6} h_{123}} = \frac{1}{2} h_{1/23} - \frac{1}{6} h_{123}.
\end{align*}
\end{example}

Our first result explains the use of the terminology skew Schur function, because the commutative image of our functions are the classical skew Schur functions.

\begin{proposition}\label{prop:rhoskew}
$$\rho (s_{[\lambda / \mu]}) = s_{\lambda / \mu}$$
\end{proposition}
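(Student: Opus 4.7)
The plan is to push $\rho$ through the noncommutative determinant term by term, using that $\rho$ is an algebra homomorphism (it just lets variables commute, so it respects sums and products of formal power series).

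First I would expand $s_{[\lambda/\mu]}$ via Equation~\eqref{eq:leibniz} as
$$s_{[\lambda/\mu]} = \sum_{\varepsilon \in \Sn_{\ell(\lambda)}} \mathrm{sgn}(\varepsilon) \prod_{i=1}^{\ell(\lambda)} \frac{1}{(\lambda_i - \mu_{\varepsilon(i)} - i + \varepsilon(i))!} h_{[\lambda_i - \mu_{\varepsilon(i)} - i + \varepsilon(i)]},$$
where the product is taken in order $i = 1, 2, \ldots, \ell(\lambda)$. Since $\rho$ is a $\bQ$-algebra homomorphism from $\bQ\langle\langle x_1, x_2, \ldots\rangle\rangle$ to $\bQ[[x_1, x_2, \ldots]]$, applying it distributes over the sum and each ordered product.

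Next I would use Lemma~\ref{lem:RSrho}(4) to compute $\rho$ on each entry. For any nonnegative integer $k$, the set partition $[k]$ has a single block of size $k$, so $\lambda([k]) = (k)$ and $\lambda([k])! = k!$, giving $\rho(h_{[k]}) = k! \, h_k$ (and the edge cases $k=0$ and $k<0$ match the conventions in Definitions \eqref{eq:JTskew} and~\eqref{eq:skewncschur}). Consequently $\rho\!\left(\frac{1}{k!} h_{[k]}\right) = h_k$, so each factor in the product becomes simply $h_{\lambda_i - \mu_{\varepsilon(i)} - i + \varepsilon(i)}$, an ordinary commuting variable.

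Finally, since the resulting factors commute in $\Sym$, the ordered product collapses to the usual unordered product, and the signed sum over $\varepsilon$ is exactly the Leibniz expansion of the commutative determinant
$$\det\!\left( h_{\lambda_i - \mu_j - i + j} \right)_{1 \le i,j \le \ell(\lambda)},$$
which equals $s_{\lambda/\mu}$ by the Jacobi-Trudi formula \eqref{eq:JTskew}. There is no real obstacle here; the only thing to be careful about is the bookkeeping of the edge conventions (negative indices give $0$, $h_{[0]} = h_0 = 1$) and noting that $\rho$ is multiplicative on $\NCSym$, which is immediate from its definition as the commutation map rather than requiring the Hopf morphism statement used in Lemma~\ref{lem:rhoonp}.
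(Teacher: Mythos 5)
Your proposal is correct and follows essentially the same route as the paper: apply $\rho$ entrywise using its multiplicativity and Lemma~\ref{lem:RSrho}, so that $\rho\bigl(\tfrac{1}{k!}h_{[k]}\bigr)=h_k$ cancels the factorials, and recognize the resulting signed sum as the commutative Jacobi--Trudi determinant in Equation~\eqref{eq:JTskew}. The only cosmetic difference is that you write out the Leibniz expansion explicitly and observe that multiplicativity of $\rho$ is immediate from its definition, whereas the paper keeps the $\bdet$ notation and cites Lemma~\ref{lem:rhoonp}; neither changes the substance.
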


\begin{proof} By definition,
\begin{align*}\rho(s_{[\lambda/\mu]})&= \rho  \left(\bdet \left( \frac{1}{(\lambda _i -\mu _j - i +j)! } h_{[\lambda _i -\mu _j - i + j]}\right) _{1\leq i,j \leq \ell(\lambda)} \right)\\
&=  \bdet \left( \frac{1}{(\lambda _i -\mu_j - i +j)! } \rho (h_{[\lambda _i -\mu_j - i + j]})\right) _{1\leq i,j \leq \ell(\lambda)}\\
&=  \det \left( h_{\lambda _i -\mu_j - i + j} \right) _{1\leq i,j \leq \ell(\lambda)}\\
&= s_{\lambda/\mu}\end{align*}
where the distributivity of $\rho$ \svwtwo{follows from the fact that $\rho$ is a Hopf morphism \cite[Proposition 4.3]{BRRZ},} the penultimate equality follows from Lemma~\ref{lem:RSrho}, and the last equality is by the definition of skew Schur functions in Equation~\eqref{eq:JTskew}.
\end{proof}

Furthermore, source Schur functions in $\NCSym$ satisfy a product rule analogous to that of Schur functions in $\Sym$, namely
\begin{equation}\label{eq:classicprod}
s_{\lambda}s_{\mu}= s_{\lambda \cdot \mu}+ s_{\lambda \odot \mu}.
\end{equation}

\begin{theorem}\label{the:prod}
$$s_{[\lambda]}s_{[\mu]}= s_{[\lambda \cdot \mu]}+ s_{[\lambda \odot \mu]}$$
\end{theorem}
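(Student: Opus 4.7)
The plan is to evaluate $\bdet(M)$ for the $(p+q)\times(p+q)$ matrix $M$ that Definition~\ref{def:sourceskew} assigns to the skew shape $\lambda\cdot\mu$, where $p=\ell(\lambda)$ and $q=\ell(\mu)$, and to show that $\bdet(M)=s_{[\lambda]}s_{[\mu]}-s_{[\lambda\odot\mu]}$ by splitting its Leibniz sum into two pieces.  The skew shape $\lambda\cdot\mu$ equals $\nu/\rho$ with outer shape $\nu=(\mu_1+\lambda_1-1,\ldots,\mu_1+\lambda_p-1,\mu_1,\ldots,\mu_q)$ and inner shape $\rho=((\mu_1-1)^p,0^q)$, so
\[
M \;=\; \begin{pmatrix} M_\lambda & X \\ E & M_\mu \end{pmatrix},
\]
where $M_\lambda$ and $M_\mu$ are the square matrices defining $s_{[\lambda]}$ and $s_{[\mu]}$.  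The decisive structural fact is that $E$ has a unique nonzero entry, namely $E_{1,p}=\tfrac{1}{0!}h_{[0]}=1$: the $h$-index of the $(p+l,j)$-entry of $M$ for $l\geq 1$ and $j\leq p$ is $\mu_l-\mu_1+1-l-p+j$, which is nonnegative only when $l=1$ and $j=p$, in which case it equals $0$.

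I would then split $\bdet(M)=\sum_{\varepsilon}\mathrm{sgn}(\varepsilon)\prod_i a_{i\varepsilon(i)}$ according to whether $\varepsilon(p+1)=p$.  If $\varepsilon(p+1)\neq p$, then avoiding the zero entries of $E$ forces $\varepsilon(i)>p$ for all $i>p$, so $\varepsilon=\varepsilon_1\times\varepsilon_2$ with $\varepsilon_1\in\Sn_p$ and $\varepsilon_2\in\Sn_q$; the corresponding Leibniz terms factor cleanly as $\bdet(M_\lambda)\bdet(M_\mu)=s_{[\lambda]}s_{[\mu]}$.  If instead $\varepsilon(p+1)=p$, then the factor $a_{p+1,p}=1$ is central and sits at slot $p+1$ of the ordered product $a_{1\varepsilon(1)}\cdots a_{p+q,\varepsilon(p+q)}$, so may simply be dropped; writing $\tilde\varepsilon$ for the induced bijection $\{1,\ldots,p+q\}\setminus\{p+1\}\to\{1,\ldots,p+q\}\setminus\{p\}$ and using $\mathrm{sgn}(\varepsilon)=(-1)^{(p+1)+p}\mathrm{sgn}(\tilde\varepsilon)=-\mathrm{sgn}(\tilde\varepsilon)$, these $\varepsilon$ contribute $-\bdet(M^{(p+1,p)})$, where $M^{(p+1,p)}$ is the minor obtained by deleting row $p+1$ and column $p$ of $M$.

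The technical heart is then the identification $M^{(p+1,p)}=M_{\lambda\odot\mu}$, where $M_{\lambda\odot\mu}$ is the $(p+q-1)\times(p+q-1)$ matrix defining $s_{[\lambda\odot\mu]}$.  Using $\lambda\odot\mu=\nu'/\rho'$ with $\nu'=(\mu_1+\lambda_1,\ldots,\mu_1+\lambda_p,\mu_2,\ldots,\mu_q)$ and $\rho'=(\mu_1^{p-1},0^q)$, I would verify entry-by-entry---separately in each of the four index-blocks determined by whether $i$ and $j$ are less than $p$---that the $h$-index of each entry of the minor, after absorbing the shift caused by the deletion, matches the Jacobi-Trudi index prescribed by $\nu'/\rho'$.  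Combining the two cases yields $s_{[\lambda\cdot\mu]}=s_{[\lambda]}s_{[\mu]}-s_{[\lambda\odot\mu]}$, which rearranges to the claimed identity.  The main obstacle is this entry-by-entry minor comparison: the arithmetic in each block is elementary, but the indexing must be tracked carefully around the deleted row and column to confirm the match.
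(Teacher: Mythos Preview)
Your proposal is correct and follows essentially the same approach as the paper: both write out the block Jacobi--Trudi matrix for $\lambda\cdot\mu$, observe that the lower-left block has a single nonzero entry $1$ at position $(p+1,p)$, split the Leibniz sum according to whether that entry is selected, and identify the two pieces as $s_{[\lambda]}s_{[\mu]}$ and $-s_{[\lambda\odot\mu]}$. Your explicit entry-by-entry verification that the minor $M^{(p+1,p)}$ coincides with $M_{\lambda\odot\mu}$ is in fact more careful than the paper's treatment, which simply asserts the identification.
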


\begin{proof} For ease of notation, for a skew diagram $D$, we will denote the matrix in Definition~\ref{def:sourceskew} of $s_{[D]}$ by $\JT_{[D]}$. 

We will prove the equivalent identity
$$s_{[\lambda \cdot \mu]}=s_{[\lambda]}s_{[\mu]}-s_{[\lambda \odot \mu]}.$$
By definition 
\begin{align*}s_{[\lambda \cdot \mu]}&= \bdet (\JT_{[\lambda \cdot \mu]})\\
&= \bdet \svwrevtwo{\begin{pmatrix}
\frac{h _{[\lambda _1]}}{\lambda _1 !}& \frac{h _{[\lambda _1+1]}}{(\lambda _1 +1)!}&\cdots &\vdots&&\\
\frac{h _{[\lambda _2 -1]}}{(\lambda _2-1) !}& \frac{h _{[\lambda _2]}}{\lambda _2 !}&\cdots &\vdots&&\iddots\\
\vdots&\vdots&\ddots&\vdots\\
\cdots&\cdots&\cdots&\frac{h _{[\lambda_{\ell(\lambda)}]}}{\lambda_{\ell(\lambda)}!}&\frac{h_{[{\lambda_{\ell(\lambda)}+\mu_1}]}}{({\lambda_{\ell(\lambda)}+\mu_1})!}&\cdots&\cdots&\cdots\\
0 &\cdots&0&1&\frac{h _{[\mu _1]}}{\mu _1 !}&\frac{h _{[\mu _1+1]}}{(\mu _1 +1)!}&\cdots&\cdots\\
0 &\cdots&0&0&\frac{h _{[\mu _2 -1]}}{(\mu_2-1) !}& \frac{h _{[\mu _2]}}{\mu _2 !}\\
\vdots &\cdots&\vdots&\vdots&\vdots&&\ddots\\
0 &\cdots&0&0&\vdots&&&\frac{h _{[\mu_{\ell(\mu)}]}}{\mu_{\ell(\mu)!}}\\
\end{pmatrix}}
\end{align*} and note that we have the block decomposition
$$\JT_{[\lambda\cdot\mu]}=\begin{pmatrix}\mathrm{JT}_{[\lambda]}&A\\Z&\mathrm{JT}_{[\mu]}\end{pmatrix}
$$
where $Z$ is entirely zeros except for a 1 in the upper-right corner. So every term in the determinant comes either from the diagonal matrix $\mathrm{diag}(\JT_{[\lambda]}, \JT_{[\mu]})$ or contains the 1 of $Z$. The terms of the first kind give $s_{[\lambda]} s_{[\mu]}$. For those of the second kind, the position of the 1 gives a minus sign, and then removing that row and column gives the matrix $\JT_{[\lambda\odot\mu]}$ for the other term. Hence, the result follows.\end{proof}

\begin{example}\label{ex:prod}
The following two examples demonstrate our product.
$$s_{[1]}s_{[21]}= s_{[221/1]} + s_{[31]} \qquad s_{[21]}s_{[1]}= s_{[211]} + s_{[32/1]}$$
\end{example}

\section{Schur functions in $\NCSym$}\label{sec:schur}  
\subsection{The standard  and permuted bases}\label{subsec:standard} 
We now use our source Schur functions to define Schur functions in noncommuting variables, but first we will require tableaux that will be useful to us in a variety of ways later. 

Consider a skew diagram $\lambda/\mu$ of size $n$. We say that $t$ is a \emph{Young tableau} of \emph{shape} $\sh (t)=\lambda/\mu$ if the boxes of $\lambda/\mu$ are filled bijectively with $1, 2, \ldots, n$. The \emph{permutation} of $t$ in one-line notation, denoted by $\delta _t\in \Sn _n$, is obtained by reading the entries of each row of $t$ from left to right, and reading the rows of $t$ from top to bottom. \svwrevtwo{Consequently, we have the correspondence
$$t \leftrightarrow (\delta_t, \sh (t)).$$}

Now consider the set of all Young tableaux $t$ such that
\begin{enumerate}[1)]
\item $\sh (t) = \lambda$ for some integer partition $\lambda\vdash n$,
\item the entries in each row of $t$ increase from left to right,
\item if $\lambda = \lambda _1 \lambda _2\cdots \lambda _{\ell(\lambda)}$ and $\lambda _i = \lambda _j$ with $i<j$, then in $t$
$$\mbox{(the leftmost entry of row $i$)}  < \mbox{(the leftmost entry of row $j$)}.$$
\end{enumerate}
Observe that this set is in bijection with the set consisting of all set partitions of $[n]$:  Young tableau \svwrevtwo{$t _\pi$} corresponds to  set partition $\pi$ if and only if the set of entries for each row of \svwrevtwo{$t_\pi$} are precisely the blocks of $\pi$. In this case we define $$\svwrevtwo{\delta _\pi = \delta _{t_\pi}}$$and note that \svwrevtwo{$\lambda(\pi) = \sh (t _\pi)$.} \svwrevtwo{Consequently, we have the correspondence
$$\pi \leftrightarrow t _\pi \leftrightarrow (\delta_{\pi}, \sh (t_\pi)).$$}

\begin{example}\label{ex:tabs}
If $t$ is
$$\tableau{&&3&8&7\\&&2\\1&9&6\\5&4}$$then $\delta _t = 387219654$.
If $t_\pi$ is
$$\tableau{1&6&9\\3&7&8\\4&5\\2}$$then $\pi = 169/378/45/2 = 169/2/378/45$ and  $\delta _\pi = \delta _{t_\pi} = 169378452$.
\end{example}

We are now ready to define skew Schur functions and Schur functions in noncommuting variables.

\begin{definition}\label{def:ncschurs}
Let $\lambda/\mu$ be a skew diagram of size $n$ and $\delta \in \Sn _n$. Then the \emph{skew Schur function in noncommuting variables} $s_{(\delta, \lambda /\mu)}$ is defined to be
\begin{equation}\label{eq:skewNCSchur}
s_{(\delta, \lambda /\mu)} = \delta \circ s_{[\lambda/\mu]} = \delta \circ \bdet \left( \frac{1}{(\lambda _i -\mu _j - i +j)! } h_{[\lambda _i -\mu_j - i + j]}\right) _{1\leq i,j \leq \ell(\lambda)}.
\end{equation}
Moreover, if $\mu = \emptyset$, then we call $s_{(\delta, \lambda)}$ a \emph{Schur function in noncommuting variables}. 

Furthermore, if $\pi \vdash [n]$ and $\lambda (\pi) = \lambda _1 \lambda _2 \cdots \lambda _{\ell(\pi)}$, then the \emph{standard Schur function in noncommuting variables} $s_\pi$ is defined to be
\begin{equation}\label{eq:NCSchur}
s_{\pi} = s_{(\delta _\pi, \lambda (\pi))}= \delta _\pi \circ s_{[\lambda (\pi)]} = \delta _\pi \circ \bdet \left( \frac{1}{(\lambda   _i  - i +j)! } h_{[\lambda   _i  - i + j]}\right) _{1\leq i,j \leq \ell(\lambda(\pi))}.
\end{equation}
\end{definition}

\SXL{We remark that unlike the other bases in $\NCSym$, $\delta \circ s_{\pi} \neq s_{\delta \pi}$ in general, as we will see in Corollary~\ref{cor:permbases}.}

\begin{example}\label{ex:ncschurs}
If $\pi = 12/3$, then $t_\pi=\tableau{1&2\\3}$ and $\delta _\pi = 123 = \id$. Hence, \svwrev{by Example~\ref{ex:sourceskew},} the standard Schur function in noncommuting variables $s_{12/3}$ is
\begin{align*}s_{12/3} &= \id \circ s_{[21]} = \id \circ\bdet \begin{pmatrix} \frac{1}{2!} h_{12}& \frac{1}{3!} h_{123}\\
\frac{1}{0!} h_{\emptyset}& \frac{1}{1!} h_{1}
\end{pmatrix}\\
&= \frac{1}{2!} h_{12} \frac{1}{1!} h_{1} - \frac{1}{3!} h_{123}\frac{1}{0!} h_{\emptyset}  = \svw{\frac{1}{2} h_{12\slashp 1} - \frac{1}{6} h_{123}} = \frac{1}{2} h_{12/3} - \frac{1}{6} h_{123}. 
\end{align*}
If $\pi = 13/2$, then $t_\pi=\tableau{1&3\\2}$ and $\delta _\pi = 132$. Hence, \svwrev{by Example~\ref{ex:sourceskew},} the standard Schur function in noncommuting variables $s_{13/2}$ is
$$s_{13/2} = 132 \circ s_{[21]} = 132\circ \left(\frac{1}{2} h_{12/3} - \frac{1}{6} h_{123}\right) = \frac{1}{2} h_{13/2} - \frac{1}{6} h_{123}.$$
\end{example}

Our next lemma explains why our former functions are so called, and generalizes Proposition~\ref{prop:rhoskew}.

\begin{lemma}\label{lem:rhoschur}
$$\rho (s_{(\delta, \lambda / \mu)}) = s_{\lambda / \mu}$$
\end{lemma}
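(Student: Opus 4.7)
The plan is to reduce this to Proposition~\ref{prop:rhoskew} by showing that the projection map $\rho$ is invariant under the permutation action $\delta$ on all of $\NCSym$. Since the definition unpacks to $\rho(s_{(\delta,\lambda/\mu)}) = \rho\bigl(\delta \circ s_{[\lambda/\mu]}\bigr)$, and Proposition~\ref{prop:rhoskew} already gives $\rho(s_{[\lambda/\mu]}) = s_{\lambda/\mu}$, it suffices to establish the identity $\rho\delta = \rho$ on arbitrary elements of $\NCSym$.

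To do this, I first observe that $s_{[\lambda/\mu]}$ is homogeneous of degree $n := |\lambda| - |\mu|$. Indeed, each nonzero summand in the noncommutative determinant of Definition~\ref{def:sourceskew} is a scalar multiple of a product of the form $h_{[\lambda_1 - \mu_{\varepsilon(1)} - 1 + \varepsilon(1)]} \cdots h_{[\lambda_{\ell(\lambda)} - \mu_{\varepsilon(\ell(\lambda))} - \ell(\lambda) + \varepsilon(\ell(\lambda))]}$ for some $\varepsilon \in \Sn_{\ell(\lambda)}$, and summing the indices telescopes to $|\lambda| - |\mu| = n$ since $\varepsilon$ permutes $\{1,\ldots,\ell(\lambda)\}$. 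Hence $s_{[\lambda/\mu]} \in \NCSym^n$, and we may expand it uniquely in the power sum basis as $s_{[\lambda/\mu]} = \sum_{\pi \vdash [n]} c_\pi\, p_\pi$.

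Then by linearity of $\rho$ and $\delta$, together with Lemma~\ref{lem:deltarhocommute},
\begin{equation*}
\rho(s_{(\delta,\lambda/\mu)}) \;=\; \rho\delta\!\left(\sum_{\pi} c_\pi p_\pi\right) \;=\; \sum_{\pi} c_\pi \,\rho\delta(p_\pi) \;=\; \sum_{\pi} c_\pi\, \rho(p_\pi) \;=\; \rho(s_{[\lambda/\mu]}) \;=\; s_{\lambda/\mu},
\end{equation*}
with the final equality supplied by Proposition~\ref{prop:rhoskew}.

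The only subtlety is making sure that the permutation action $\delta$ from Equation~\eqref{eq:delta}, which is defined on basis elements, is applied linearly to the expansion of $s_{[\lambda/\mu]}$ in the $p$-basis; this is just the usual extension of a linear map and requires no further argument. There is no substantive obstacle to this proof once homogeneity is verified and Lemma~\ref{lem:deltarhocommute} and Proposition~\ref{prop:rhoskew} are in hand.
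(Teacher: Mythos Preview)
Your proof is correct and follows essentially the same approach as the paper: both invoke Lemma~\ref{lem:deltarhocommute} to pass from $\rho(\delta\circ s_{[\lambda/\mu]})$ to $\rho(s_{[\lambda/\mu]})$, and then appeal to Proposition~\ref{prop:rhoskew}. The only difference is that you make explicit the expansion in the $p$-basis and the linearity argument needed to extend Lemma~\ref{lem:deltarhocommute} from a single $p_\pi$ to an arbitrary element of $\NCSym^n$, whereas the paper leaves this implicit.
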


\begin{proof} \SXL{By Lemma~\ref{lem:deltarhocommute} we have that
$$\rho(s_{(\delta, \lambda / \mu)}) = \rho(\delta \circ s_{[\lambda/\mu]}) = \rho(s_{[\lambda/\mu]})=  s_{\lambda / \mu}$$
where the last equality follows by Proposition~\ref{prop:rhoskew}.}\end{proof}

Our next theorem explains why our latter functions are so called, since they are the natural analogues in $\NCSym$ indexed by set partitions, of the standard Schur functions in $\Sym$ indexed by integer partitions. We call the resulting basis the \emph{Schur basis} of $\NCSym$.

\begin{theorem}\label{the:ncschur}
The set  $\{ s_\pi \} _{\pi \vdash [n], \svwrev{n\geq 0}}$ is a basis for $\NCSym$. Moreover,
$$\rho (s_\pi) = s_{\lambda(\pi)}.$$
\end{theorem}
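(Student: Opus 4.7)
The plan is to split the statement into the commutative image formula and the basis property. The identity $\rho(s_\pi)=s_{\lambda(\pi)}$ is immediate from Lemma~\ref{lem:rhoschur} applied to $s_\pi=s_{(\delta_\pi,\lambda(\pi))}$. For the basis claim, since $|\{\pi\vdash[n]\}|=\dim\NCSym^n$, I only need to show linear independence, which I plan to obtain by a triangularity argument relative to the $h$-basis of $\NCSym$.

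Write $\lambda=\lambda(\pi)$ with parts $\lambda_1\geq\cdots\geq\lambda_{\ell}>0$. Expanding the noncommutative determinant in the definition of $s_{[\lambda]}$ via Equation~\eqref{eq:leibniz} and multiplying out the resulting $h$'s using Lemma~\ref{lem:hmult}, I obtain
\[
s_{[\lambda]}\;=\;\sum_{\varepsilon\in\Sn_{\ell}}\frac{\mathrm{sgn}(\varepsilon)}{\prod_{i=1}^\ell(\lambda_i-i+\varepsilon(i))!}\,h_{\sigma(\varepsilon)},
\]
where $\sigma(\varepsilon)=[\lambda_1+\varepsilon(1)-1]\slashp\cdots\slashp[\lambda_{\ell}+\varepsilon(\ell)-\ell]$ and terms with a negative index are dropped. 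The identity permutation $\varepsilon=\id$ yields $\frac{1}{\lambda!}h_{[\lambda]}$, and applying $\delta_\pi$ produces the term $\frac{1}{\lambda(\pi)!}h_\pi$, since $\delta_\pi\cdot[\lambda(\pi)]=\pi$ is built into the construction of $\delta_\pi$ from the row-reading of $t_\pi$.

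The heart of the argument will be a dominance lemma: for every $\varepsilon\neq\id$ producing a nonzero term, the sorted block-size sequence $\lambda(\sigma(\varepsilon))$ strictly dominates $\lambda$. Setting $\mu_i=\lambda_i+\varepsilon(i)-i$ and writing $\mu^\downarrow$ for its weakly decreasing rearrangement, I have $\sum_i\mu_i=\sum_i\lambda_i$ and, for each $k$,
\[
\sum_{i=1}^k\mu^\downarrow_i\;\geq\;\sum_{i=1}^k\mu_i\;=\;\sum_{i=1}^k\lambda_i+\sum_{i=1}^k\varepsilon(i)-\binom{k+1}{2}\;\geq\;\sum_{i=1}^k\lambda_i,
\]
since any $k$ distinct positive integers sum to at least $\binom{k+1}{2}$. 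Choosing $k$ to be the smallest index where $\varepsilon(k)\neq k$ forces $\varepsilon(k)\geq k+1$, delivering strict inequality at that $k$. Because $\delta_\pi$ merely relabels elements and preserves block sizes, the shape of $h_{\delta_\pi\cdot\sigma(\varepsilon)}$ also strictly dominates $\lambda$ for $\varepsilon\neq\id$.

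Combining these pieces yields $s_\pi=\frac{1}{\lambda(\pi)!}h_\pi+\sum_\sigma c_{\pi\sigma}h_\sigma$, where the remaining sum is over $\sigma$ with $\lambda(\sigma)\rhd\lambda(\pi)$ strictly. Ordering set partitions of $[n]$ by any linear extension of dominance on shapes (smaller shapes first, ties broken arbitrarily), the transition matrix from $\{s_\pi\}$ to $\{h_\pi\}$ becomes upper triangular with diagonal entries $\frac{1}{\lambda(\pi)!}\neq 0$, and is therefore invertible, so $\{s_\pi\}_{\pi\vdash[n]}$ is a basis. The main obstacle is the dominance lemma itself, which echoes the classical Jacobi--Trudi story but requires verification in the noncommuting slash-product setting; everything else is routine bookkeeping using Lemmas~\ref{lem:hmult} and~\ref{lem:rhoschur} together with the action of $\delta_\pi$.
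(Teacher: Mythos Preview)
Your proof is correct and takes essentially the same approach as the paper: both establish the basis claim via upper-triangularity of the transition matrix from $\{s_\pi\}$ to $\{h_\pi\}$, with the diagonal term coming from $\varepsilon=\id$ and every off-diagonal term having strictly larger underlying shape. The only difference is that the paper imports this triangularity from the classical Jacobi--Trudi result in $\Sym$ (citing Macdonald) and lifts it through the definition of $s_\pi$ and $\rho$, whereas you prove the dominance inequality for $\mu^{\downarrow}$ directly; your explicit diagonal coefficient $\tfrac{1}{\lambda(\pi)!}$ is in fact more precise than the paper's phrasing of ``upper-unitriangular''.
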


\begin{proof} The second part follows immediately by Lemma~\ref{lem:rhoschur} when $\delta = \delta _\pi$, $\lambda = \lambda(\pi)$ and $\mu=\emptyset$.

\svwrev{For the first part, let us consider the transition matrix from standard Schur functions in noncommuting variables to complete homogeneous symmetric functions in noncommuting variables, where the indexing set partitions are ordered by the lexicographic ordering of  \svwrevtwo{$\lambda(\pi)$} and ties are broken arbitrarily between set partitions $\pi$ and $\sigma$ satisfying $\lambda(\pi)= \lambda(\sigma)$. Observe by the definition of $s_\pi$  that $h _\pi$ appears in the expansion of $s_\pi$ with coefficient $\frac{1}{\lambda(\pi)!}$ and all other terms $h_\sigma$ appearing in the expansion of $s_\pi$ have $\sigma$ that satisfies \svwrevtwo{$\lambda(\sigma)$} is greater than \svwrevtwo{$\lambda(\pi)$} in lexicographic order. Hence $\sigma$ is larger than $\pi$ in our indexing order, and the result follows.}
\end{proof}

Another method to produce a basis for $\NCSym$ is to apply $\omega$ to each $s_\pi$, which we study next. The \emph{transposed standard  Schur function in noncommuting variables} for a set partition $\pi$ is defined to be
\begin{equation}\label{eq:ncschurt}
\st _\pi = \omega (s_\pi)
\end{equation}and our next theorem gives an explicit formula to compute these.

\begin{theorem}\label{the:transpose} Let $\pi$ be a set partition and $\lambda (\pi) = \lambda _1 \lambda _2 \cdots \lambda _{\ell(\pi)}$. Then \begin{equation}
\label{eq:ncschurtine}
\st _\pi = \delta _\pi \circ \bdet \left( \frac{1}{(\lambda _i - i +j)! } e_{[\lambda _i - i + j]}\right) _{1\leq i,j \leq \ell(\pi)}.
\end{equation}
The set  $\{ \st _\pi \} _{\pi \vdash [n], \svwrev{n\geq 0}}$ is a basis for $\NCSym$. Moreover,
$$\rho (\st _\pi) = s_{\lambda(\pi)^t}.$$
\end{theorem}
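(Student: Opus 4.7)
The approach is to apply $\omega$ directly to the defining formula $s_\pi = \delta_\pi \circ s_{[\lambda(\pi)]} = \delta_\pi \circ \bdet\!\left( \tfrac{1}{(\lambda_i - i + j)!} h_{[\lambda_i - i + j]}\right)_{1 \leq i,j \leq \ell(\pi)}$ from Definition~\ref{def:ncschurs}, and to observe that $\omega$ passes through both the permutation action $\delta_\pi$ and the noncommutative determinant, converting each $h_{[k]}$ into $e_{[k]}$. For the basis statement I would use that $\omega$ is an involution, and for the projection identity I would apply $\rho$ to the formula obtained for $\st_\pi$.

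The first step is to establish $\omega \delta = \delta \omega$ on all of $\NCSym$: Lemma~\ref{lem:omegadeltacommute} gives this identity on the $p$-basis, and since both $\omega$ and $\delta$ are linear and the $p_\pi$ span $\NCSym$, it extends to every element. Hence $\st_\pi = \omega(s_\pi) = \delta_\pi \circ \omega(s_{[\lambda(\pi)]})$. Next, Lemma~\ref{lem:omegaonp} gives multiplicativity of $\omega$ on products of $p$-basis elements, and by bilinearity this extends to an algebra morphism on all of $\NCSym$. Expanding $s_{[\lambda(\pi)]}$ via the noncommutative Leibniz rule in Equation~\eqref{eq:leibniz}, I push $\omega$ inside the signed sum and onto each factor, replacing every $h_{[k]}$ by $e_{[k]}$ via Equation~\eqref{eq:omega}; reassembling the determinant yields Equation~\eqref{eq:ncschurtine}.

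The basis statement then follows because $\omega$ is a linear bijection (being an involution) that carries the Schur basis $\{s_\pi\}$ from Theorem~\ref{the:ncschur} to $\{\st_\pi\}$. For the projection, apply $\rho$ to Equation~\eqref{eq:ncschurtine}: Lemma~\ref{lem:deltarhocommute} (extended from the $p$-basis by linearity) lets $\rho$ absorb $\delta_\pi$, Lemma~\ref{lem:rhoonp} (extended to arbitrary products by bilinearity) lets $\rho$ distribute over the Leibniz expansion, and Lemma~\ref{lem:RSrho}(3) gives $\rho\!\left(\tfrac{1}{k!} e_{[k]}\right) = e_k$. The resulting expression is $\det\!\left(e_{\lambda_i - i + j}\right)_{1 \leq i, j \leq \ell(\pi)}$, which is precisely the dual Jacobi--Trudi formula from Equation~\eqref{eq:JT} for $s_{\lambda(\pi)^t}$ (using $(\lambda(\pi)^t)^t = \lambda(\pi)$). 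The main obstacle is the bookkeeping required to promote the $p$-basis identities in Lemmas~\ref{lem:omegaonp}, \ref{lem:omegadeltacommute}, and \ref{lem:deltarhocommute} to statements about products of arbitrary elements and, in turn, about each entry of the noncommutative determinant; once these extensions are in hand, the rest is a mechanical substitution.
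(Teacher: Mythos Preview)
Your proof of Equation~\eqref{eq:ncschurtine} and of the basis claim is correct and essentially identical to the paper's: both push $\omega$ past $\delta_\pi$ via Lemma~\ref{lem:omegadeltacommute}, then through the noncommutative Leibniz expansion via Lemma~\ref{lem:omegaonp}, turning each $h_{[k]}$ into $e_{[k]}$; and both deduce the basis statement from Theorem~\ref{the:ncschur} together with the bijectivity of $\omega$. Your remark that the $p$-basis lemmas must be extended by (bi)linearity to arbitrary elements is exactly right and is the only subtlety.

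Where you diverge from the paper is in the projection identity $\rho(\st_\pi)=s_{\lambda(\pi)^t}$. The paper does not compute with the $e$-determinant at all: it simply invokes the fact that $\rho$ and $\omega$ commute (\cite[Theorem~3.5~(iii)]{RS}), so
\[
\rho(\st_\pi)=\rho\omega(s_\pi)=\omega\rho(s_\pi)=\omega(s_{\lambda(\pi)})=s_{\lambda(\pi)^t},
\]
using Theorem~\ref{the:ncschur} and the classical action of $\omega$ on Schur functions. Your route---applying $\rho$ directly to the $e$-determinant, using Lemmas~\ref{lem:deltarhocommute}, \ref{lem:rhoonp}, \ref{lem:RSrho}(3), and then recognising $\det(e_{\lambda_i-i+j})$ as $s_{\lambda(\pi)^t}$ via the dual Jacobi--Trudi formula in Equation~\eqref{eq:JT}---is also correct. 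The paper's argument is shorter and conceptually cleaner (one external citation replaces the entire determinant computation), while yours has the minor advantage of being self-contained in that it does not need to import the commutation of $\rho$ and $\omega$ from \cite{RS}, mirroring instead the computation already done in Proposition~\ref{prop:rhoskew}.
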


\begin{proof} For the first part note that
\begin{align*}
\st _\pi &= \omega (s_\pi) = \omega\delta _\pi \circ \bdet \left( \frac{1}{(\lambda _i - i +j)! } h_{[\lambda _i - i + j]}\right) _{1\leq i,j \leq \ell(\pi)}\\
&= \delta _\pi \circ \omega \bdet \left( \frac{1}{(\lambda _i - i +j)! } h_{[\lambda _i - i + j]}\right) _{1\leq i,j \leq \ell(\pi)}\\
&= \delta _\pi \circ  \bdet \left( \frac{1}{(\lambda _i - i +j)! } e_{[\lambda _i - i + j]}\right) _{1\leq i,j \leq \ell(\pi)}
\end{align*}where the first equality follows by definition, and the last two equalities follow by Lemma~\ref{lem:omegadeltacommute} and Lemma~\ref{lem:omegaonp} respectively. The second part now follows by Theorem~\ref{the:ncschur} and because $\omega$ is a bijection.

For the third part, since $\omega$ and $\rho$ commute \cite[Theorem 3.5 (iii)]{RS}
$$\rho (\st _\pi) = \rho\omega(s_\pi)=\omega\rho(s_\pi) = \omega (s_{\lambda(\pi)})=s_{\lambda(\pi)^t}$$by \svwrevtwo{Theorem~\ref{the:ncschur} and} the definition of $\omega$.\end{proof}

In contrast to $\Sym$,  the Schur basis given by standard Schur functions in noncommuting variables, and the basis given \svwrevtwo{by} transposed standard Schur functions in noncommuting variables are different.

\begin{proposition}\label{prop:2bases}
$$\{ s _\pi \} _{\pi \vdash [n], \svwrev{n\geq 0}}\neq \{ \st _\pi \} _{\pi \vdash [n], \svwrev{n\geq 0}}$$
\end{proposition}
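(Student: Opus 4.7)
The plan is to exhibit a single element of $\{s^t_\pi\}$ that does not belong to $\{s_\pi\}$. The key structural tool is the projection $\rho$: by Theorems~\ref{the:ncschur} and \ref{the:transpose} we have $\rho(s_\sigma) = s_{\lambda(\sigma)}$ and $\rho(s^t_\pi) = s_{\lambda(\pi)^t}$, and the classical Schur functions $\{s_\lambda\}$ are linearly independent in $\Sym$. Hence if $s^t_\pi = s_\sigma$ for some $\sigma$, then necessarily $\lambda(\sigma) = \lambda(\pi)^t$. This gives a cheap way to narrow the candidate $\sigma$ down to a small list before doing any real computation.

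I would take $\pi = 1/2/3 \vdash [3]$, so $\lambda(\pi) = 111$ and $\lambda(\pi)^t = 3$. The only $\sigma \vdash [3]$ with $\lambda(\sigma) = 3$ is $\sigma = 123$, so the only possible equality is $s^t_{1/2/3} = s_{123}$. Using Definition~\ref{def:ncschurs} one gets the $1\times 1$ computation $s_{123} = \tfrac{1}{6}h_{123}$, and expanding the $3 \times 3$ noncommutative Jacobi-Trudi determinant yields
$$s_{1/2/3} = h_{1/2/3} - \tfrac{1}{2}h_{12/3} - \tfrac{1}{2}h_{1/23} + \tfrac{1}{6}h_{123}.$$
Since $\omega$ is an involution with $\omega(h_\pi) = e_\pi$, the proposed equality $s^t_{1/2/3} = s_{123}$ is equivalent to $s_{1/2/3} = \omega(\tfrac{1}{6}h_{123}) = \tfrac{1}{6}e_{123}$. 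A direct count from the definition of the $e$-basis of $\NCSym$ shows $e_{123} = m_{1/2/3}$, so the claim reduces to $s_{1/2/3} = \tfrac{1}{6}m_{1/2/3}$.

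Finally I would expand both sides in the $m$-basis using $h_\pi = \sum_\sigma \lambda(\sigma \wedge \pi)!\, m_\sigma$ and compare a single convenient coefficient, for instance that of $m_{1/23}$. The contributions $1 \cdot 1 - \tfrac{1}{2}\cdot 1 - \tfrac{1}{2}\cdot 2 + \tfrac{1}{6}\cdot 2 = -\tfrac{1}{6}$ from the four $h$-terms in $s_{1/2/3}$ give a nonzero coefficient, while $\tfrac{1}{6}m_{1/2/3}$ has coefficient $0$ there. This contradicts $s^t_{1/2/3} = s_{123}$, and hence the two sets differ.

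The only real obstacle is the bookkeeping in the final $m$-basis comparison; everything else is forced by the projection argument, which cleanly rules out all other candidates and reduces the proposition to one explicit calculation in $\NCSym^3$.
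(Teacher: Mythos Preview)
Your proof is correct. Both your argument and the paper's reduce to the same core claim, namely that $s_{1/2/\cdots/n}\neq \tfrac{1}{n!}e_{[n]}$ (you do $n=3$, the paper treats all $n\ge 3$ at once), but you reach that point by a different route. The paper picks $s_{[n]}\in\{s_\pi\}$ and uses the upper-triangularity of the $s$-to-$h$ transition (from the proof of Theorem~\ref{the:ncschur}) to force the only candidate preimage under $\omega$ to be $\pi=1/2/\cdots/n$; it then contrasts the $h$-support of $s_{1/2/\cdots/n}$ (indexed by compositions via $[\alpha]$) with that of $\tfrac{1}{n!}e_{[n]}$ (all set partitions, by \cite[Theorem~3.4]{RS}). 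You instead pick $s^t_{1/2/3}\in\{s^t_\pi\}$ and use the projection $\rho$ together with linear independence of classical Schur functions to pin down the unique candidate $\sigma=123$, then settle the inequality by an $m$-basis coefficient. Your narrowing-down via $\rho$ is arguably cleaner and uses only the stated conclusions of Theorems~\ref{the:ncschur} and~\ref{the:transpose} rather than the internals of their proofs; the paper's version has the advantage of giving the contradiction uniformly for every $n\ge 3$ without a numerical computation.
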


\begin{proof} In order to prove this, it suffices to find an element $s_\sigma \in \{ s _\pi \} _{\pi \vdash [n], \svwrev{n\geq 0}}$ such that $s_\sigma \not \in \{ \st _\pi \} _{\pi \vdash [n], \svwrev{n\geq 0}}$. Let us consider $\sigma = [n]$ for some $n \geq 3$. Then if we assume that $s_{[n]} \in \{ \st _\pi \} _{\pi \vdash [n], \svwrev{n\geq 0}}$, by definition we must have that $\omega (s_\pi) = s_{[n]}$ for some $\pi$. By   applying the involution $\omega$ to both sides we obtain that
$$s_\pi = \omega(s_{[n]}) = \frac{1}{n!} e_{[n]} = \frac{1}{n!} \sum _{\tau \vdash [n]} (-1) ^\tau \ell(\tau)! h_\tau$$by \cite[Theorem 3.4]{RS}, which gives the formula for $e_\pi$ in the $h$-basis of $\NCSym$. However if $s_\pi$ contains $h_{1/2/\cdots / n}$ as a summand, then by the proof of Theorem~\ref{the:ncschur} this implies that $\pi = 1/2/\cdots / n$ and hence by Equation~\eqref{eq:NCSchur} it is straightforward to compute
\begin{align*} s_\pi = s_{1/2/\cdots / n} = \sum _{\svwrev{\alpha _1 + \alpha _2 + \cdots + \alpha _{\ell(\alpha)}= n}} (-1)^{n+\ell(\alpha)} \frac{1}{\alpha _1! \alpha _2! \cdots \alpha _{\ell(\alpha)}!}  h _{[\alpha _1]\slashp [\alpha _2]\slashp \cdots \slashp [\alpha _{\ell(\alpha)}]},  
\end{align*}which is a contradiction since $n\geq 3$, and hence $s_{[n]} \not\in \{ \st _\pi \} _{\pi \vdash [n], \svwrev{n\geq 0}}$ and we are done.
\end{proof}


This basis is not the only basis naturally arising from the Schur basis of $\NCSym$.
Using the action of permutations on the {Schur basis of $\NCSym$}, we can produce $n!$ different bases for $\NCSym^n$ when $n\geq 5$. 

\begin{corollary}\label{cor:permbases}
Let  $\delta$ and $\eta$ be permutations of $n\geq 5$.
\begin{enumerate}[1)]
\item The set $\{\delta\circ s_\pi: \pi\vdash [n] \}$ is a basis for $\NCSym^n$. 
\item If $\delta \neq \id$, then $\{s_\pi: \pi\vdash [n] \}$ and $\{\delta\circ s_\pi: \pi\vdash [n] \}$ are different bases for $\NCSym^n$.
\item If $\{\delta\circ s_\pi: \pi\vdash [n] \}=\{\eta\circ s_\pi: \pi\vdash [n] \},$ then $\delta=\eta$. 
\item We have that $\rho (\delta \circ s_\pi) = s_{\lambda (\pi)}$.
\end{enumerate} 
\end{corollary}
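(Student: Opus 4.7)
The plan is to address the four parts in the order (1), (4), (2), (3), invoking throughout the leading-term decomposition
\[
 s_\pi \;=\; \tfrac{1}{\lambda(\pi)!}\,h_\pi \;+\; \sum_{\tau:\ \lambda(\tau)>\lambda(\pi)} c_{\pi,\tau}\,h_\tau
\]
implicit in the proof of Theorem~\ref{the:ncschur}, where the inequality is in the lexicographic order on reversed integer partitions. For Part (1), observe that $\delta$ acts on $\NCSym^n$ as a linear automorphism since it bijectively permutes the $h$-basis via $\delta\circ h_\pi = h_{\delta\pi}$; hence it carries the Schur basis of Theorem~\ref{the:ncschur} to another basis. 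For Part (4), I extend Lemma~\ref{lem:deltarhocommute} from the $p$-basis to all of $\NCSym^n$ by the linearity of $\rho$ and $\delta$, so $\rho(\delta\circ f)=\rho(f)$ for every $f\in\NCSym^n$; specializing to $f=s_\pi$ and applying Theorem~\ref{the:ncschur} yields $\rho(\delta\circ s_\pi)=\rho(s_\pi)=s_{\lambda(\pi)}$.

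Part (2) is the main obstacle. Applying $\delta$ to the displayed decomposition, the leading $h$-term of $\delta\circ s_\pi$ is $\tfrac{1}{\lambda(\pi)!}h_{\delta\pi}$ (since $\lambda(\delta\tau)=\lambda(\tau)$), so any equality $\delta\circ s_\pi=s_\sigma$ forces $\sigma=\delta\pi$. Hence $\{\delta\circ s_\pi\}=\{s_\pi\}$ as sets is equivalent to the equivariance $\delta\circ s_\pi=s_{\delta\pi}$ for every $\pi$, and it suffices to exhibit a single violating $\pi$ given $\delta\neq\id$. I split into two cases. If $\delta$ is not the reversal $w_0=n(n{-}1)\cdots 21$, I take $\pi=1/2/\cdots/n$: since $\delta\pi=\pi$ and $\delta_\pi=\id$, the equivariance reduces to $\delta$ stabilizing $s_{[(1^n)]}$. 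Using the expansion of $s_{1/2/\cdots/n}$ as a signed sum of $h_{[\alpha]}$ over compositions $\alpha\vDash n$ with nonzero coefficients (from the proof of Proposition~\ref{prop:2bases}), this stabilizer consists precisely of the permutations preserving every interval of $[n]$; a short argument on two-element intervals (each adjacent pair must map to an adjacent pair, forcing $\pm 1$ steps with no repeats) shows such a permutation must be $\id$ or $w_0$, so $\delta$ fails to stabilize. If $\delta=w_0$, I take $\pi=12/3/4/\cdots/n$: the Jacobi-Trudi expansion of $s_{[(2,1^{n-2})]}$ contains $h_{123/4/\cdots/n}$ with nonzero coefficient, and a direct computation shows $w_0\cdot(123/4/\cdots/n)=1/2/\cdots/(n-3)/(n-2)(n-1)n$ while $\delta_{w_0\pi}\cdot(123/4/\cdots/n)=1(n-1)n/2/3/\cdots/(n-2)$, distinct set partitions for $n\geq 4$, so $w_0\circ s_\pi\neq s_{w_0\pi}$. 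Verifying this reversal-case mismatch is the technically subtlest step.

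Part (3) reduces to Part (2): setting $\gamma=\eta^{-1}\delta$, the hypothesis gives $\gamma\circ s_\pi\in\{s_\sigma\}$ for every $\pi$, so $\{\gamma\circ s_\pi\}\subseteq\{s_\sigma\}$, and by cardinality equality of sets holds. Part (2) applied to $\gamma$ forces $\gamma=\id$, whence $\delta=\eta$.
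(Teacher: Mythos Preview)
Your arguments for Parts (1), (3), and (4) are correct and essentially match the paper's. Your Part (2) is also correct in substance, but the route is genuinely different from the paper's. The paper handles all $\delta\neq\id$ uniformly: choose a descent $\delta(i)>\delta(i+1)$, take $\pi$ of shape $(n-2,2)$ with size-$2$ block $\{i,i+1\}$, and compare the two-term $h$-expansions of $\delta\circ s_\pi$ and of the unique $s_\sigma$ sharing its leading $h$-term; their second terms visibly differ (and $n\geq 5$ is used to make $n-2>2$, so that the shape is not $(2,2)$). Your approach instead computes the full $\Sn_n$-stabilizer of $s_{1/2/\cdots/n}$ to dispose of all $\delta\notin\{\id,w_0\}$ at once, and then treats $\delta=w_0$ separately using shape $(2,1^{n-2})$. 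The paper's argument is shorter and avoids a case split; yours has the incidental benefit that it already works for $n\geq 4$.

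One step in your $w_0$ case needs to be made explicit. Knowing that $w_0\cdot(123/4/\cdots/n)$ and $\delta_{w_0\pi}\cdot(123/4/\cdots/n)$ are distinct does not by itself yield $w_0\circ s_\pi\neq s_{w_0\pi}$: some \emph{other} term of $s_\pi$ could land on $w_0\cdot(123/4/\cdots/n)$ under $\delta_{w_0\pi}$. What finishes the argument is that the $h$-support of $s_\pi=s_{[(2,1^{n-2})]}$ consists entirely of interval set partitions, whereas
\[
\delta_{w_0\pi}^{-1}\,w_0\cdot(123/4/\cdots/n)\;=\;12n/3/4/\cdots/(n-1)
\]
has the non-interval block $\{1,2,n\}$ for $n\geq 4$. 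Hence the coefficient of $h_{(n-2)(n-1)n/1/\cdots/(n-3)}$ in $s_{w_0\pi}$ is $0$, while in $w_0\circ s_\pi$ it equals $-1/6$. You should state this support observation rather than jump directly from ``distinct set partitions'' to ``not equal''.
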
 

\begin{proof} 
\begin{enumerate}[1)]
\item \svwrev{We know by Theorem~\ref{the:ncschur} that $\{ s_\pi: \pi\vdash [n] \}$ is a basis for $\NCSym^n$, so each $h_\sigma$, where $\sigma \vdash [n]$, can be written as a linear combination of the elements of $\{ s_\pi: \pi\vdash [n] \}$. Hence, $\delta\circ h_{\sigma} = h_{\delta (\sigma)}$ can be written as a linear combination of the elements of $\{\delta\circ s_\pi: \pi\vdash [n] \}$. Because $\sigma$ runs over all set partitions and $\delta$ is a bijection, it follows that $\delta(\sigma)$ runs over all set partitions. Hence the $\delta\circ s_\pi$ span the basis $\{h_\pi: \pi\vdash [n] \}$, and have the same cardinality as it, and therefore $\{\delta\circ s_\pi: \pi\vdash [n] \}$ is a basis for $\NCSym^n$.}

\item Since $\delta \neq \id$, there are integers $i$ and $i+1$ such that $\delta(i)>\delta(i+1)$.
Take the set partition $\pi=12\cdots (i-1)(i+2) \cdots n / i(i+1)$.  Then 
$$\delta_\pi= 12\cdots (i-1)(i+2) \cdots n i(i+1)$$
 and 
 \begin{align*} 
 s_{\pi}&=s_{(\delta_\pi,(n-2,2))}\\
 &=\delta_\pi\circ \left(\svwrev{\frac{1}{2(n-2)!}}h_{12\cdots (n-2) / (n-1)n}- \svwrev{\frac{1}{(n-1)!}}h_{12 \cdots (n-1) /n}\right)\\
 &=\svwrev{\frac{1}{2(n-2)!}}h_{12\cdots (i-1)(i+2) \cdots n / i(i+1)}-\svwrev{\frac{1}{(n-1)!}}h_{12\cdots i (i+2) \cdots n /(i+1)}
 \end{align*}where $n-2\geq 3$ since $n\geq5$. Note \svw{that}
  \begin{align*} 
  \delta\circ s_{\pi}&=\delta\circ s_{(\delta_\pi,(n-2,2))}  \\
   &=\delta\circ\left(\svwrev{\frac{1}{2(n-2)!}}h_{12\cdots (i-1)(i+2) \cdots n / i(i+1)}-\svwrev{\frac{1}{(n-1)!}}h_{12\cdots i (i+2) \cdots n /(i+1)}\right)\\
   & = \svwrev{\frac{1}{2(n-2)!}}h_{ \delta(1) \delta(2) \cdots \delta({i-1})\delta({i+2})\cdots \delta(n)/\delta(i)\delta({i+1}) }- \svwrev{\frac{1}{(n-1)!}}h_{\delta(1)\delta(2)\cdots \delta(i) \delta({i+2}) \cdots \delta(n) /\delta({i+1})}.
 \end{align*} 
 Now take $\sigma=\delta(1) \delta(2) \cdots \delta({i-1})\delta(i+2)\cdots \delta(n)/\delta(i)\delta({i+1}) $. Then 
 $$\delta_{\sigma}=\sort(\delta(1) \delta(2) \cdots \delta({i-1})\delta({i+2})\cdots \delta(n))\delta({i+1})\delta({i})$$ where $\sort$ is \svwrevtwo{the} function that sorts each set of positive integers increasingly. 
 We have
 \begin{align*}
 s_{\sigma}&=s_{(\delta_{\sigma},(n-2,2))}\\
 & =\delta_{\sigma}\circ \left(\svwrev{\frac{1}{2(n-2)!}}h_{12\cdots (n-2) / (n-1)n}- \svwrev{\frac{1}{(n-1)!}}h_{12 \cdots (n-1) /n}\right)\\
 &=\svwrev{\frac{1}{2(n-2)!}}h_{ \delta(1) \delta(2) \cdots \delta({i-1})\delta({i+2})\cdots \delta(n)/\delta(i)\delta({i+1}) }- \svwrev{\frac{1}{(n-1)!}}h_{\delta(1)\delta(2)\cdots \delta({i-1}) \delta({i+1}) \cdots \delta(n) /\delta({i})}.
 \end{align*}
 We  see that the leading terms for $\delta\circ s_{\pi}$
 and $s_{\sigma}$ are the same but their second terms are different. This guarantees that $\delta\circ s_{\pi}$ is not in the basis $\{s_\pi: \pi\vdash [n] \}$, but it is in  $\{\delta\circ s_\pi: \pi\vdash [n] \}$. Therefore, $\{s_\pi: \pi\vdash [n] \}$ and $\{\delta\circ s_\pi: \pi\vdash [n] \}$ are different bases.

\item Assume that 
$$\{\delta\circ s_{\pi}: \pi \vdash [n]\}=\{\eta\circ s_{\pi}: \pi \vdash [n]\}.$$ Then every $\delta \circ s_{\pi}$ is equal to some  $\eta\circ s_{\sigma}$. This implies that 
$$s_{\pi}=\delta^{-1}\svwrevtwo{\eta \circ s_{\sigma}.}$$  Therefore, \svwrevtwo{the} two bases $\{s_{\pi}: \pi \vdash [n]\}$ and $\{\delta^{-1}\eta\circ s_{\sigma}: \sigma \vdash [n]\}$ are the same. But, by the previous part, if the basis $\{\delta^{-1}\eta\circ s_{\sigma}: \sigma \vdash [n]\}$  is equal to \svwrevtwo{the basis} $\{s_{\pi}: \pi \vdash [n]\}$, then $\delta^{-1}\eta = \id$. Therefore, $\delta=\eta$.

\item By definition and Lemma~\ref{lem:rhoschur} we have that
$$\rho (\delta \circ s_\pi) = \rho(\delta \circ \delta _\pi \circ s_{[\lambda(\pi)]})= \rho(\delta\delta_\pi \circ s_{[\lambda(\pi)]}) = \rho (s_{(\delta\delta_\pi, \lambda(\pi))}) = s_{\lambda (\pi)}.$$
\end{enumerate} 
\end{proof}

We now show that our Schur functions in noncommuting variables also satisfy a product rule as our source Schur functions did. However, first we need the following. For permutations $\delta\in \Sn_n$ and $\eta\in \Sn_m$,
define the \emph{shifted concatenation} of $\delta$ and $\eta$ to be the permutation 
$$\delta \slashp \eta(i)=\begin{cases}
\delta(i) & \text{if~} 1\leq i \leq n,\\
\eta(i-n)  +n &  \text{if~} n+1\leq i \leq n+m.
\end{cases}$$ 
For example,
$$ 13425\slashp 123=13425678.$$ 

\begin{theorem}\label{the:deltaact}
Let $\delta\in \Sn_n$ and $\eta\in \Sn_m$, and let $f\in \NCSym^n$ and $g\in \NCSym^m$. Then 
$$(\delta\circ  f ) (\eta \circ g)=(\delta \slashp \eta)\circ (fg).$$
\end{theorem}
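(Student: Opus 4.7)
The plan is to reduce the identity to a computation on a single basis of $\NCSym$ and then use bilinearity of multiplication together with linearity of the permutation action. The $p$-basis is ideally suited because it behaves well under both multiplication and the permutation action: by \cite[Lemma 4.1 (i)]{BHRZ} we have $p_\pi p_\sigma = p_{\pi\mid\sigma}$, and by Equation~\eqref{eq:delta} we have $\delta\circ p_\pi = p_{\delta\pi}$.

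So first I would expand $f=\sum_{\pi\vdash[n]} c_\pi p_\pi$ and $g=\sum_{\sigma\vdash[m]} d_\sigma p_\sigma$, and observe that, since $(\delta\circ -)$, $(\eta\circ-)$, and $((\delta\mid\eta)\circ-)$ are all linear, and multiplication is bilinear, it suffices to prove
$$(\delta\circ p_\pi)(\eta\circ p_\sigma) = (\delta\mid\eta)\circ(p_\pi p_\sigma)$$
for arbitrary $\pi\vdash[n]$ and $\sigma\vdash[m]$.

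Second, I would carry out both sides explicitly. The left-hand side becomes
$$(\delta\circ p_\pi)(\eta\circ p_\sigma)=p_{\delta\pi}\,p_{\eta\sigma}=p_{(\delta\pi)\mid(\eta\sigma)},$$
using Equation~\eqref{eq:delta} and $p_\pi p_\sigma = p_{\pi\mid\sigma}$. The right-hand side becomes
$$(\delta\mid\eta)\circ(p_\pi p_\sigma)=(\delta\mid\eta)\circ p_{\pi\mid\sigma}=p_{(\delta\mid\eta)(\pi\mid\sigma)}.$$

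The only remaining step is the combinatorial identity $(\delta\mid\eta)(\pi\mid\sigma)=(\delta\pi)\mid(\eta\sigma)$ on set partitions. Unpacking definitions: a block $B$ of $\pi\mid\sigma$ is either a block $B$ of $\pi$ sitting inside $[n]$, or a block $C+n$ with $C$ a block of $\sigma$. Applying $\delta\mid\eta$ sends a block $B\subseteq[n]$ to $\delta(B)$ and a block $C+n$ to $\eta(C)+n$, which by definition of the slash product is exactly $(\delta\pi)\mid(\eta\sigma)$. This is the only step that requires any real attention, and it is an immediate unraveling of notation, so no step poses a genuine obstacle. Combining the three computations yields the theorem.
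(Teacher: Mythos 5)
Your proof is correct and is essentially the paper's argument: the paper expands $f$ and $g$ in the $h$-basis and uses $h_\pi h_\sigma = h_{\pi\mid\sigma}$ (Lemma~\ref{lem:hmult}) together with $\delta\circ h_\pi = h_{\delta\pi}$, while you perform the identical bilinear reduction in the $p$-basis using $p_\pi p_\sigma = p_{\pi\mid\sigma}$; both choices work because each basis is multiplicative with respect to the slash product and equivariant under the permutation action. A minor bonus of your write-up is that you verify explicitly the block-level identity $(\delta\mid\eta)(\pi\mid\sigma)=(\delta\pi)\mid(\eta\sigma)$, which the paper uses only implicitly.
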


\begin{proof}
\svwrev{We will prove this in $\mathbb{Q}\langle\langle x_1, x_2, \ldots \rangle\rangle$ and hence the result will follow for $\NCSym$. We know that $\circ$ and multiplication are linear and hence if we can prove this result for two monomials $f,g$ we are done. However, this follows immediately by the definition of group action and that
$$\svwrevtwo{(\delta \slashp \eta)^{-1}}(i)=\begin{cases}
\delta^{-1}(i) & \text{if~} 1\leq i \leq n,\\
{\eta^{-1}(i-n) +n } &  \text{if~} n+1\leq i \leq n+m.
\end{cases}$$}
\end{proof}

We can now prove our product rule. 

\begin{proposition}\label{prop:prod}
Let $\delta\in \Sn_n$ and $\eta\in \Sn_m$, and let $\lambda\vdash n$ and $\mu\vdash m$. Then 
$$s_{(\delta,\lambda)}s_{(\eta,\mu)}=s_{(\delta \slashp \eta,\lambda\cdot\mu)}+s_{(\delta \slashp \eta,\lambda\odot\mu)}.$$
\end{proposition}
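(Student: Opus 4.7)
The plan is to show that this proposition follows almost immediately by chaining together the three results that directly precede it: Definition~\ref{def:ncschurs}, Theorem~\ref{the:deltaact}, and Theorem~\ref{the:prod}. The strategy is to peel off the permutation actions from each Schur function, multiply the resulting source Schur functions, and then reapply a single combined permutation action.

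First I would rewrite each factor using Definition~\ref{def:ncschurs}, giving $s_{(\delta,\lambda)} = \delta \circ s_{[\lambda]}$ and $s_{(\eta,\mu)} = \eta \circ s_{[\mu]}$, so that
\[
s_{(\delta,\lambda)}\, s_{(\eta,\mu)} = (\delta \circ s_{[\lambda]})(\eta \circ s_{[\mu]}).
\]
Next I would apply Theorem~\ref{the:deltaact}, which is precisely the tool that converts a product of permuted elements into a single permutation action on the product, yielding
\[
(\delta \circ s_{[\lambda]})(\eta \circ s_{[\mu]}) = (\delta \mid \eta)\circ (s_{[\lambda]}\, s_{[\mu]}).
\]
Then Theorem~\ref{the:prod} evaluates the inner product as $s_{[\lambda]}\, s_{[\mu]} = s_{[\lambda\cdot\mu]} + s_{[\lambda\odot\mu]}$, and finally the linearity of the permutation action, together with Definition~\ref{def:ncschurs} reapplied to each summand, gives
\[
(\delta\mid\eta)\circ\bigl(s_{[\lambda\cdot\mu]} + s_{[\lambda\odot\mu]}\bigr) = s_{(\delta\mid\eta,\,\lambda\cdot\mu)} + s_{(\delta\mid\eta,\,\lambda\odot\mu)},
\]
which is the desired identity.

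There is no real obstacle here, since all of the substantive combinatorial and algebraic content has already been established: the product formula for source Schur functions was proved in Theorem~\ref{the:prod} via the matrix block decomposition argument, and the compatibility of multiplication with the permutation action was proved in Theorem~\ref{the:deltaact} by reducing to the $h$-basis and invoking Lemma~\ref{lem:hmult}. The only mild subtlety worth flagging in the write-up is that $\delta$ acts on $\NCSym^n$ and $\eta$ acts on $\NCSym^m$, so one should note that $s_{[\lambda]} \in \NCSym^n$ and $s_{[\mu]} \in \NCSym^m$ (which holds because the source Schur functions are homogeneous of the expected degree, as is evident from Definition~\ref{def:sourceskew}), ensuring that Theorem~\ref{the:deltaact} applies.
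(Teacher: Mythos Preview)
Your proof is correct and follows exactly the same approach as the paper's own proof: unwind the definition, apply Theorem~\ref{the:deltaact} to combine the permutation actions, invoke Theorem~\ref{the:prod} on the source Schur functions, and reapply the definition to each summand. The paper's argument is essentially identical, line for line.
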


\begin{proof}We have by Theorems~\ref{the:prod} and \ref{the:deltaact} that 
\begin{align*} 
s_{(\delta,\lambda)}s_{(\eta,\mu)}&=(\delta \circ s_{[\lambda]})(\eta\circ s_{[\mu]})=(\delta \slashp \eta)\circ (s_{[\lambda]}s_{[\mu]})
\\
&=(\delta \slashp \eta)\circ (s_{[\lambda\cdot\mu]}+s_{[\lambda\odot \mu]})=(\delta \slashp \eta)\circ s_{[\lambda\cdot\mu]}+(\delta \slashp \eta)\circ s_{[\lambda\odot \mu]}
\\
&=s_{(\delta \slashp \eta,\lambda\cdot\mu)}+s_{(\delta \slashp \eta,\lambda\odot\mu)}.
\end{align*}
\end{proof} 

This \svwrevtwo{immediately} gives us the product rule for the Schur basis of $\NCSym$. 

\begin{corollary}\label{cor:prod}
Given set partitions $\pi$ and $\sigma$ with $\lambda(\pi)=\lambda$ and $\lambda(\sigma)=\mu$, we have that 
$$ s_\pi s_{\sigma}=s_{(\svwrevtwo{{\delta_\pi}\slashp \delta_{\sigma}},\lambda\cdot\mu)}+s_{(\svwrevtwo{{\delta_\pi}\slashp \delta_{\sigma}}, \lambda\odot \mu)}.$$
\end{corollary}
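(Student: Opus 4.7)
The plan is to read off the claim as an immediate specialization of Proposition~\ref{prop:prod}, using the compatibility of the $\delta_{(\cdot)}$ construction with slash product noted in the remark immediately preceding this corollary. Since $\lambda(\pi)=\lambda$ and $\lambda(\sigma)=\mu$, Equation~\eqref{eq:NCSchur} gives $s_\pi = s_{(\delta_\pi,\lambda)}$ and $s_\sigma = s_{(\delta_\sigma,\mu)}$, so the product $s_\pi s_\sigma$ is literally the left-hand side of Proposition~\ref{prop:prod} with $\delta=\delta_\pi$ and $\eta=\delta_\sigma$.

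Applying that proposition yields
$$s_\pi s_\sigma = s_{(\delta_\pi|\delta_\sigma,\lambda\cdot\mu)} + s_{(\delta_\pi|\delta_\sigma,\lambda\odot\mu)}.$$
Next, I would invoke the identity $\delta_\pi|\delta_\sigma = \delta_{\pi|\sigma}$ to replace both occurrences of $\delta_\pi|\delta_\sigma$ by $\delta_{\pi|\sigma}$ in the two summands, obtaining the desired formula exactly.

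There is no substantive obstacle here: the heavy lifting has been carried out in Proposition~\ref{prop:prod} via the $h$-basis multiplicativity of Theorem~\ref{the:deltaact} and Theorem~\ref{the:prod}, and the only remaining ingredient is the bookkeeping identity $\delta_\pi|\delta_\sigma=\delta_{\pi|\sigma}$, which is already asserted. Should one wish to verify that identity from scratch, the quickest route is to observe that the canonical Young tableau associated to $\pi|\sigma$ places the tableau of $\pi$ above the tableau of $\sigma$ with its entries shifted up by $n=|\lambda|$, so that reading row by row from top to bottom produces precisely the shifted concatenation of the two reading words. With that in hand, the corollary is one line of substitution.
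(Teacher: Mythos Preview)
Your proposal is correct and matches the paper's argument exactly. The paper asserts the identity $\delta_\pi\,|\,\delta_\sigma=\delta_{\pi\mid\sigma}$ in the sentence immediately preceding the corollary and observes that this, combined with Proposition~\ref{prop:prod}, gives the product rule --- which is precisely the substitution you carry out.
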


\subsection{The tabloid basis and Specht modules}\label{subsec:tabloid}  
Recall from Subsection~\ref{subsec:standard} that each pair $(\delta, \lambda/\mu)$ corresponds to a Young tableau $t$ satisfying $\sh (t) = \lambda/\mu$ and $\delta _t = \delta$ \svwrev{where $\delta \in \Sn _n$ and $\lambda/\mu$ is of size $n$, for some $n$.} Now for each Young tableau $t$ define
\begin{equation}\label{eq:schurt}
s_t= \delta _t \circ s_{[\sh(t)]}.
\end{equation}

\begin{example}\label{ex:schurt}
If $t$ is the first Young tableau in Example~\ref{ex:tabs}, then $s_t = 387219654 \circ s_{[5332/22]}$.
\end{example}

\begin{definition}\label{def:schurt} Two Young tableaux $t_1$ and $t_2$ are \emph{row equivalent} if $\sh(t_1) =\sh(t_2)$, and row $i$  of $t_1$  and row $i$ of $t_2$ contain the same elements, for all $i$. \svwrev{ This gives an equivalence relation on Young tableaux, and we denote  the equivalence class containing the Young tableau $t$ by $[t]$. If $t$ is a Young tableau, then we define}
$$s_{[t]}=\svw{\sum _{\svwrevtwo{\tilde{t} \in [t]}} s_{\tilde{t}}}$$\svwrevtwo{namely,} the sum runs over all Young tableaux $\svw{\tilde{t}}$ row equivalent to $t$.
\end{definition}

\begin{example}\label{ex:schurt2} Consider $t=\tableau{1&2\\3}$. Then $[t] = \left\{ \tableau{1&2\\3}, \tableau{2&1\\3}\right\}$ and
$$s_{\left[ \tableau{1&2\\3}\right]} = 123 \circ s_{[21]} + 213 \circ s_{[21]} = s_{\left[ \tableau{2&1\\3}\right]}.$$
\end{example}

We now reveal another basis for $\NCSym$, which we call the \emph{tabloid basis} of $\NCSym$.

\begin{theorem}\label{the:schurt}
The set
$$\{s_{[t]} : \sh(t) = \lambda (\pi), \delta _t = \delta _\pi\} _{\pi \vdash [n], \svwrev{n\geq0}}$$is  a basis for $\NCSym$. Moreover,
$$\rho(s_{[t]})= \sh (t)! s_{\sh(t)}.$$
\end{theorem}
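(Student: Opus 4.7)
First, specifying $\sh(t)=\lambda(\pi)$ and $\delta_t=\delta_\pi$ pins down $t$ uniquely as the tableau $t_\pi$ whose $i$th row lists the $i$th block of $\pi$ in increasing order; hence the indexing set is in bijection with the set partitions of $[n]$, so its cardinality equals $\dim\NCSym^n$ and it will suffice to prove linear independence together with the claimed $\rho$-identity.

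For the $\rho$-identity, the plan is to expand
$$s_{[t_\pi]}=\sum_{\tilde t\in[t_\pi]}s_{\tilde t}=\sum_{\tilde t\in[t_\pi]}\delta_{\tilde t}\circ s_{[\lambda(\pi)]}$$
and apply Lemma~\ref{lem:rhoschur} to each summand, giving $\rho(s_{\tilde t})=s_{\lambda(\pi)}$ for every $\tilde t$. Since $[t_\pi]$ consists of all tableaux obtained by independently reordering the $i$th row of $t_\pi$ for every $i$, it has cardinality $\prod_i\lambda(\pi)_i!=\sh(t_\pi)!$, and summing produces $\rho(s_{[t_\pi]})=\sh(t_\pi)!\,s_{\sh(t_\pi)}$, as claimed.

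For the basis claim, my approach is to show that the transition matrix from $\{s_{[t_\pi]}\}_{\pi\vdash [n]}$ to the $h$-basis of $\NCSym^n$ is upper unitriangular under the ordering used in Theorem~\ref{the:ncschur}: reverse lexicographic order on $\lambda(\pi)$, ties broken arbitrarily. The crucial observation is that for every $\tilde t\in[t_\pi]$ the permutation $\delta_{\tilde t}$ carries the $i$th block of the standard set partition $[\lambda(\pi)]$ onto the set of entries in the $i$th row of $\tilde t$, which by definition of $[t_\pi]$ is the $i$th block of $\pi$; hence $\delta_{\tilde t}([\lambda(\pi)])=\pi$ independent of the internal orderings within the rows of $\tilde t$.

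Combined with the content of the proof of Theorem~\ref{the:ncschur}, the Leibniz expansion of $\bdet$ shows that $s_{[\lambda(\pi)]}$ contains $\tfrac{1}{\lambda(\pi)!}h_{[\lambda(\pi)]}$ as its unique $h$-term whose index has integer partition equal to $\lambda(\pi)$, every remaining term being indexed by a set partition whose integer partition is strictly larger in reverse lex order. Applying $\delta_{\tilde t}$ and then summing over the $\lambda(\pi)!=|[t_\pi]|$ row-equivalent tableaux contributes exactly $\lambda(\pi)!\cdot\tfrac{1}{\lambda(\pi)!}h_\pi=h_\pi$, no other $h_\sigma$ with $\lambda(\sigma)=\lambda(\pi)$, and leaves all remaining terms indexed strictly above $\pi$ in the order. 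The transition matrix is therefore upper unitriangular and hence invertible, establishing the basis claim. The main subtlety in this plan is precisely the last inspection of which monomials of $s_{[\lambda(\pi)]}$ share the integer partition $\lambda(\pi)$, which is the Jacobi--Trudi triangularity step already at the heart of Theorem~\ref{the:ncschur}.
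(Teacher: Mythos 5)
Your proposal is correct and takes essentially the same route as the paper: the identity $\rho(s_{[t]})=\sh(t)!\,s_{\sh(t)}$ via Lemma~\ref{lem:rhoschur} applied to each of the $\sh(t)!$ row-equivalent tableaux, and the basis claim by triangularity against the $h$-basis inherited from the proof of Theorem~\ref{the:ncschur}, using that $\delta_{\tilde{t}}([\lambda(\pi)])=\pi$ for every $\tilde{t}$ row equivalent to $t_\pi$, so each summand contributes a leading multiple of $h_\pi$. Your explicit computation that these leading contributions sum to exactly $h_\pi$ is a slightly more detailed version of the paper's one-line leading-term argument, but it is the same idea.
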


\begin{proof} For the first part, since all $s_t$ with $\delta _t = \delta _\pi$ have the same leading term as $s_\pi$ when written in terms of the $h$-basis of $\NCSym$, we can conclude that
$$\{s_{[t]} : \sh(t) = \lambda (\pi), \delta _t = \delta _\pi\} _{\pi \vdash [n], \svwrev{n\geq 0}}$$is  a basis for $\NCSym$ as $\{ s_\pi\} _{\pi \vdash [n], \svwrev{n\geq 0}}$ is a basis for $\NCSym$ by Theorem~\ref{the:ncschur}. The second part follows immediately by Lemma~\ref{lem:rhoschur} when $\delta = \delta _t$, $\lambda = \sh(t)$, $\mu = \emptyset$ for a given $t$ and then noting that the number of Young tableaux row equivalent to $t$ is $\sh(t)!$.
\end{proof}

Note that since $\omega$ is an isomorphism
$$\{\omega(s_{[t]}) : \sh(t) = \lambda (\pi), \delta _t = \delta _\pi\} _{\pi \vdash [n], \svwrev{n\geq0}}$$is also a basis of $\NCSym$.
 
To conclude this section, we now turn our attention to Specht modules. Consider the permutation module corresponding to $\lambda$, $$M^{\lambda}=\mathbb{C}\text{-span}\{[t]: \sh(t)=\lambda\}.$$ Given a tableau $t$ with $k$ columns $C_1,C_2,\ldots,C_k$, let $$C_t=\Sn_{C_1}\times \Sn_{C_2}\times \cdots \times \Sn_{C_k}$$ be the {\it column-stabilizer} of $t$. Define 
 $$e_{t}=\left(\sum_{\delta\in C_t}{\rm sgn}(\delta) \delta\right)\circ [t]$$
and $${\bf e}_t=\left(\sum_{\delta\in C_t}{\rm sgn}(\delta) \delta\right)\circ s_{[t]}.$$ 
The corresponding  {\it Specht module} \FA{\cite[Definition 2.3.4]{sagan}} of an integer partition $\lambda$, $S^\lambda$, is the submodule of 
 $M^{\lambda}$ spanned by $$\{e_t: \sh(t)=\lambda\}.$$
 Let ${\bf S}^\lambda$ be the submodule of $\NCSym$ spanned by 
 $$\{{\bf e}_t: \sh(t)=\lambda\}.$$
 
 Now consider the map 
 $$
 \begin{array}{ccc}
 S^{\lambda}& \rightarrow & {\bf S}^\lambda \\
 e_t & \mapsto & {\bf e}_{t}.
 \end{array} 
 $$
Note that this map is surjective between two $\Sn_n$-modules, \svwrev{and by the definition of $s_t$ in Equation~\eqref{eq:schurt} we have that $\delta \circ s_t = s_{\delta t}$, where $\delta$ acts on the elements of $t$ in the natural way. Thus the map preserves the action of $\Sn _n$.} Since $S^\lambda$ is irreducible \FA{\cite[Theorem 2.4.6]{sagan}}  we conclude, \FA{if it is not zero,} that the above map is an isomorphism. 

\section{Rosas-Sagan Schur functions}\label{sec:RS}  
We now connect our (skew) Schur functions in noncommuting variables to those of Rosas-Sagan, which we state now in an equivalent form to that in \svwrevtwo{\cite[Section 6]{RS}} after we recall some well-known combinatorial concepts.

Let $\lambda/\mu$ be a skew diagram and let ${\rm SSYT}(\lambda/\mu)$ be the set of \emph{semistandard Young tableaux} $T$ of shape $\sh(T)=\lambda/\mu$; that is, the boxes of $\lambda/\mu$ are filled with positive integers such that the rows of $T$ are weakly increasing from left to right, and columns of $T$ are strictly increasing from top to bottom. Let $\nu$ be an integer partition. Then the \emph{Kostka number} $K_\nu^{\lambda/\mu}$ is the number of $T\in {\rm SSYT}(\lambda/\mu)$ such that the number of \svwrevtwo{$i$'s} appearing in $T$ is $\nu_i$. When $\mu=\emptyset$, we write ${\rm SSYT}(\lambda)$ and $K_{\nu}^\lambda$ for brevity.

\begin{definition}\label{def:RS} Let $\lambda/\mu$ be a skew diagram of size $n$. Then the \emph{Rosas-Sagan skew Schur function} $S_{\lambda/\mu}$ is
defined to be $$S_{\lambda/\mu}=\sum_{\delta\in \Sn_n}\sum_{T\in {\rm SSYT}(\lambda/\mu)} x_{c(T_{\delta(1)})}x_{c(T_{\delta(2)})}\cdots x_{c(T_{\delta(n)})}$$ where the boxes of $T$ are labelled $T_1,T_2,\ldots, T_n$, and $c(T_i)$ is the content of the box $T_i$. We often denote
$x_{c(T_{\delta(1)})}x_{c(T_{\delta(2)})}\cdots x_{c(T_{\delta(n)})}$  by $x^{(\delta,T)}$.\end{definition}

\begin{example}\label{ex:skewRS}
From the semistandard Young tableaux 
$$\tableau{1&2\\3}\qquad \svwrev{\tableau{&2\\2&3}}$$and boxes labelled naturally by row we get \FA{the monomials $x_1x_2x_3$ and \svwrev{$x_2x_2x_3$,} respectively. Then applying the permutations of $\Sn _3$ to each we get the following, respectively.}
\begin{align*}
S_{21}   &=x_1x_2x_3+x_1x_3x_2+x_2x_1x_3+x_2x_3x_1+x_3x_1x_2+x_3x_2x_1+\cdots\\
S_{22/1}&=\svwrev{x_2x_2x_3+x_2x_3x_2+x_2x_2x_3+x_2x_3x_2+x_3x_2x_2+x_3x_2x_2+\cdots}
\end{align*}
\end{example}

This is the natural generalization of their original definition, which was when $\mu = \emptyset$. This can be seen by the next lemma that is the natural generalization of \cite[Theorem 6.2 (i)]{RS}, which expresses the Rosas-Sagan Schur functions in the $m$-basis of $\NCSym$. 

\begin{lemma}\label{lem:Schur} Let $\lambda/\mu$ be a skew diagram \SXL{of size $n$}. Then 
$$S_{\lambda/\mu}=\sum_{\SXL{\nu \vdash n}} \nu! K_{\nu}^{\lambda/\mu}\sum_{\SXL{\pi \vdash [n]} \atop \lambda(\pi)=\nu}m_{\pi}.$$
\end{lemma}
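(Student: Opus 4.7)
The plan is to expand $S_{\lambda/\mu}$ in the $m$-basis of $\NCSym$ by computing, for each set partition $\pi\vdash[n]$, the coefficient of $m_\pi$ and showing that it equals $\nu!\,K_\nu^{\lambda/\mu}$, where $\nu=\lambda(\pi)$. Once this is in hand, grouping the resulting expansion $\sum_\pi \nu!\,K_\nu^{\lambda/\mu}\,m_\pi$ by the common integer partition $\nu$ yields the stated formula. Since every monomial appearing in $m_\pi$ has coefficient $1$ there and is determined up to relabelling of variables by the block structure of $\pi$, it suffices to compute the coefficient in $S_{\lambda/\mu}$ of one representative monomial of $m_\pi$; I would pick $x_{j_1}\cdots x_{j_n}$ where $j_k=r$ precisely when $k$ belongs to the $r$-th block $B_r$ of $\pi$.

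Next I would count the pairs $(\delta,T)$ in Definition~\ref{def:RS} that produce this representative monomial. The condition $c(T_{\delta(k)})=j_k$ for $k\in[n]$ forces $T$ to be a semistandard Young tableau of shape $\lambda/\mu$ whose content function takes value $|B_r|$ at $r$ for $r\in[\ell(\pi)]$ and is $0$ elsewhere; the number of such $T$ equals $K_\nu^{\lambda/\mu}$. For each such $T$, writing $P_r=\{j:c(T_j)=r\}$, the condition on $\delta$ is precisely that $\delta$ restrict to a bijection $B_r\to P_r$ for every $r$, and since $|B_r|=|P_r|$ this contributes $\prod_r |B_r|!=\nu!$ choices. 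Multiplying gives the coefficient $\nu!\,K_\nu^{\lambda/\mu}$ of the chosen monomial, and hence of $m_\pi$.

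The one subtlety I expect to address is that the content of $T$ that arises in the argument above is the composition $(|B_1|,|B_2|,\ldots,|B_{\ell(\pi)}|)$ rather than the partition $\nu$. Invoking the standard fact that the number of semistandard Young tableaux of a given skew shape depends only on the multiset of content multiplicities, equivalently that $s_{\lambda/\mu}$ is a symmetric function (provable via Bender--Knuth involutions), identifies this count with $K_\nu^{\lambda/\mu}$ regardless of the order of the block sizes. With that granted, the proof concludes by summing $\nu!\,K_\nu^{\lambda/\mu}\,m_\pi$ over all $\pi\vdash[n]$ and collecting terms according to $\nu=\lambda(\pi)$.
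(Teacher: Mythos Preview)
Your proposal is correct and follows essentially the same route as the paper: pick a representative monomial of $m_\pi$, count pairs $(\delta,T)$ producing it, and factor the count as (number of $T$ with the right content) times (number of admissible $\delta$), obtaining $K_\nu^{\lambda/\mu}\cdot\nu!$. The only difference is cosmetic: the paper orders the blocks of $\pi$ so that $|\pi_1|\geq|\pi_2|\geq\cdots$, which makes the content of $T$ equal to $\nu$ outright and avoids the composition-versus-partition subtlety you handle via the symmetry of $s_{\lambda/\mu}$.
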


\begin{proof}
Fix a $\svw{\pi=\pi_1/\pi_2/\cdots /\pi_{\ell(\pi)}  \vdash [n]}$ with $|\pi_1|\geq |\pi_2|\geq \cdots\geq |\pi_{\ell(\pi)}|$. Let $x^\pi=x_{i_1}x_{i_2}\cdots x_{i_n}$ where $i_j=p$ if and only if $j\in \pi_p$. Let $$\displaystyle S_{\lambda/\mu}=\sum_\pi c_\pi^{\lambda/\mu} m_\pi.$$ Then $c_\pi^{\lambda/\mu}$ is the coefficient of $x^\pi$ in the polynomial expansion of $S_{\lambda/\mu}$.

In order for \svwrevtwo{$x_{c(T_{\delta(1)})}x_{c(T_{\delta(2)})}\svw{\cdots }x_{c(T_{\delta(n)})}=x^\pi$,} we need semistandard Young tableaux $T$ such that the $i$th part of $\lambda(\pi)$ is the number of \svwrevtwo{$i$'s} in $T$, and permutations that fix the sets \svwrevtwo{$\{j:c(T_j)=i\}$ for all $i$.} The number of such permutations is $\svw{\lambda(\pi)!}$. Therefore,
$$S_{\lambda/\mu}=\sum_{\pi}\svw{\lambda(\pi)!}K_{\lambda(\pi)}^{\lambda/\mu}m_\pi=\sum_\nu \nu!K_\nu^{\lambda/\mu}\sum_{\lambda(\pi)=\nu}m_\pi.$$
\end{proof} 

Next, we slightly alter the way of writing our skew Schur functions \svw{in noncommuting variables} to enable our proofs to be more straightforward in this section.

A weak composition of $n$ is a \svw{list or equivalently a} tuple of nonnegative integers whose sum is $n$. \svw{The tuple notation will be useful in this section, and the list notation in the next.} For any weak composition $\alpha = \svw{\alpha _1 \alpha _2 \cdots \alpha_{\ell(\alpha)} =  (\alpha_1,\alpha_2,\ldots,\alpha_{\ell(\alpha)})}$ of  $n$, let 
\svw{$$\alpha!=\alpha_1!\alpha_2!\cdots \alpha_{\ell(\alpha)}!$$and}
$$[\alpha]=[\alpha_1]\slashp [\alpha_2]\slashp \cdots \slashp [\alpha_{\ell(\alpha)}].$$ Consider that for some $\delta,\eta\in \Sn_n$, we might have $$\delta([\alpha])=\eta([\alpha]).$$

Let $\lambda/\mu$ be a skew diagram of size $n$ and  $\varepsilon\in \Sn_{\ell(\lambda)}$. Define 
$\lambda-\mu_\varepsilon+\varepsilon-\id$ to be a tuple of length $\ell(\lambda)$ such that 
its $i$th component is $$\lambda_{i}-\mu_{\varepsilon(i)}+\varepsilon(i)-i$$ where \svwrevtwo{$\mu_{j}=0$ if $j>\ell(\mu)$.} We let $h_{[\lambda-\mu_{\varepsilon}+\varepsilon-\id]}=0$ if a component of $\lambda-\mu_{\varepsilon}+\varepsilon-\id$ is negative.
For a skew diagram $\lambda/\mu$ of size $n$ and $\delta\in \Sn_n$, observe that our skew Schur function in noncommuting variables can be written as
\begin{align}\label{eq:newskew}s_{(\delta,\lambda/\mu)}&= \sum_{\varepsilon\in \Sn_{\ell(\lambda)}} {\rm sgn}(\varepsilon)\frac{1}{\svwrev{(\lambda-\mu_{\varepsilon}-\id+\varepsilon)!}}\delta\circ h_{\svwrev{[\lambda-\mu_{\varepsilon}-\id+\varepsilon]}}\\&= \sum_{\varepsilon\in \Sn_{\ell(\lambda)}} {\rm sgn}(\varepsilon)\frac{1}{(\lambda-\mu_{\varepsilon}+\varepsilon-\id)!}\delta\circ h_{[\lambda-\mu_{\varepsilon}+\varepsilon-\id]}.\nonumber\end{align}

We will now spend most of the remainder of this section showing that skew Schur functions in noncommuting variables naturally refine Rosas-Sagan skew Schur functions, namely, $$\sum_{\delta\in \Sn_n} s_{(\delta, \lambda/\mu)}=S_{\lambda/\mu}.$$ We use a method  that is similar to the one that has been discovered and used by   \FA{Lindstr\"om \cite{Lin 73}} and Gessel and Viennot \cite{Ges, GV 85, GV pre}, \svwtwo{thus giving a completely combinatorial proof of our result that can also be proved algebraically.}

\subsection{The \FA{Lindstr\"om-Gessel-Viennot} swap}\label{subsec:LGV}

Consider the plane $\mathbb{Z}\times (\mathbb{Z}\cup \{\infty\})$. In this plane we look at the paths 
$$P=s_1s_2s_3 \cdots  $$ from $(a,1)$ to  $(a+k,b)$ where $k$ is a nonnegative integer, $b\in \mathbb{Z}\cup \{\infty\}$, and each $s_i$ is a unit length northward step ($N$) or eastward step ($E$). If a path $P$ \svwrevtwo{goes} from $(a,1)$ to  $(a+k,b)$ we write 
$$(a,1)\overset{P}{\longleftrightarrow}{(a+k,b)}.$$
 We mostly work with the paths from $(a,1)$ to  $(a+k,\infty)$. 

$$
\begin{tikzpicture}
    \foreach \i in {1,...,5}
     \path[black] (\i,0) node{\i} (0,\i) node{\i};
    \foreach \i in {1,...,5}
      \foreach \j in {1,...,5}{
        \draw[fill=black] (\i,\j) circle(2pt);
      };
      \draw[very thick] (1,1)--(1,2)--(2,2)--(3,2)--(3,3)--(4,3)--(4,4)--(5,4)--(5,5);
      \draw[very thick,dotted]  (5,5)--(5,6);
      
      \node at (1.5,2.25){2};
      \node at (2.5,2.25){2};
      \node at (3.5,3.25){3};
      \node at (4.5,4.25){4};
  \end{tikzpicture}
$$
The height of an eastward step $E$, denoted ${\hig}(E)$, is $b$ if $E$ \svwrevtwo{goes} from $(i,b)$ to $(i+1,b)$. The above path is $P=NEENENENN\cdots$ from $(1,1)$ to $(5,\infty)$ and the height of each eastward step is written above the step. 

\

Now for every skew diagram $\lambda/\mu$  and \FA{$\varepsilon\in \Sn_{\ell(\lambda)}$,} let  $\mathcal{P}(\varepsilon,\lambda/\mu)$ be the set of all tuples of paths $$(P_1,P_2,\ldots, P_{\ell(\lambda)})$$ such that $P_i$ is a path from $(\mu_{\varepsilon(i)}-\varepsilon(i),1)$ to $(\lambda_i-i,\infty)$, 
$$(\mu_{\varepsilon(i)}-\varepsilon(i),1)\overset{P_i}{\longleftrightarrow} (\lambda_i-i,\infty).$$
For example, if $\lambda/\mu=332/110$ and $\varepsilon=213$, one of the tuples in $\mathcal{P}(213,332/110)$ is \svwrevtwo{$P=(P_1,P_2,P_3)$} where
$$P_1=NEENENN\cdots, \quad\quad\quad  P_2=NNENNNN\cdots,   \quad\quad\quad P_3=ENNENNN\cdots$$
such that
$$(-1,1)\overset{P_1}{\longleftrightarrow} (2,\infty),~~~~~~\quad\quad \quad ~~~~~(0,1)\overset{P_2}{\longleftrightarrow} (1,\infty),~~~~~~\quad\quad \quad~~~~~~(-3,1)\overset{P_3}{\longleftrightarrow} (-1,\infty).$$

$$
\begin{tikzpicture}
    \foreach \i in {-3,...,2}
     \path[black] (\i,0) node{\i};
     \foreach \i in {1,...,4}
     \path[black] (-4,\i) node{\i};
    \foreach \i in {-3,...,2}
      \foreach \j in {1,...,4}{
        \draw[fill=black] (\i,\j) circle(2pt);
      };
      
      \draw[very thick,dashed] (-1,1)--(-1,2)--(0,2)--(1,2)--(1,3)--(2,3)--(2,4);
      \draw[very thick,dotted,black]  (2,4)--(2,5);
      \node at (2,5.5){$P_1$};
      
        \draw[very thick,black] (0,1)--(0,2)--(0,3)--(1,3)--(1,4);
      \draw[very thick,dotted,black]  (1,4)--(1,5);
       \node at (1,5.5){$P_2$};
      
            \draw[very thick,gray] (-3,1)--(-2,1)--(-2,2)--(-2,3)--(-1,3)--(-1,4);
      \draw[very thick,dotted,black]  (-1,4)--(-1,5);
         \node at (-1,5.5){$P_3$};
  \end{tikzpicture}
$$
\svwrevtwo{Let} $$\mathcal{P}(\lambda/\mu)=\bigcup_{\varepsilon\in \Sn_{\ell(\lambda)}}\mathcal{P}(\varepsilon,\lambda/\mu).$$ Consider that if $P\in \mathcal{P}(\lambda/\mu)$, then by looking at the initial and end points of $P$, we can  deduce for which $\varepsilon\in \Sn_{\ell(\lambda)}$, $P$ is in $\mathcal{P}(\varepsilon,\lambda/\mu)$. We define $${\rm sgn}\svw{(P)}={{\rm sgn}(\varepsilon)}.$$ 

\

We now define the \FA{Lindstr\"om-Gessel-Viennot} swap, which is an involution on $\mathcal{P}(\lambda/\mu)$.

\begin{definition}\label{def:LGVswap}{\bf (\FA{Lindstr\"om-Gessel-Viennot} swap)}
Given $P=(P_1,P_2,\ldots,P_{\ell(\lambda)})\in \mathcal{P}(\varepsilon,\lambda/\mu)$, define $\iota(P)=P'=(P'_1,P'_2,\ldots,P'_{\ell(\lambda)})$ as follows.
\begin{enumerate}[1)]
\item If $P_i$ and  $P_j$ do not intersect for all $i,j$, then $P'=P$.
\item \svwtwo{Otherwise, find the smallest index $i$ such that the path $P_i$ intersects some other paths. Starting from $(\mu_{\varepsilon(i)}-\varepsilon(i),1)$ and going along $P_i$, let $(a,b)$ be the {last} place where $P_i$ intersects with another path. Finally, find the smallest index $j>i$ such that $P_i$ intersects $P_j$ at $(a,b)$. Now let} 
\begin{align*}
P'_i&=\text{~the initial point of ~}P_j\overset{P_j}{\longleftrightarrow}(a,b)\overset{P_i}{\longleftrightarrow} \text{~the end point of~} P_i\\
&=(\mu_{\varepsilon(j)}-\varepsilon(j),1)\overset{P_j}{\longleftrightarrow}(a,b)\overset{P_i}{\longleftrightarrow}(\lambda_i-i,\infty),
\end{align*}
and
\begin{align*}
P'_j&=\text{~the initial point of ~}P_i\overset{P_i}{\longleftrightarrow}(a,b)\overset{P_j}{\longleftrightarrow} \text{~the end point of~} P_j\\
&=(\mu_{\varepsilon(i)}-\varepsilon(i),1)\overset{P_i}{\longleftrightarrow}(a,b)\overset{P_j}{\longleftrightarrow}(\lambda_j-j,\infty). 
\end{align*}
Now define, $P'=P$ with $P_i$ and $P_j$ replaced by $P'_i$ and $P'_j$, respectively.
\end{enumerate}
\end{definition}

$$
\begin{tikzpicture}
    \foreach \i in {-3,...,2}
     \path[black] (\i,0) node{\i};
     \foreach \i in {1,...,4}
     \path[black] (-4,\i) node{\i};
    \foreach \i in {-3,...,2}
      \foreach \j in {1,...,4}{
        \draw[fill=black] (\i,\j) circle(2pt);
      };
      \draw (1,3) circle(4pt);
      \draw[very thick,dashed] (-1,1)--(-1,2)--(0,2)--(1,2)--(1,3)--(2,3)--(2,4);
      \draw[very thick,dotted]  (2,4)--(2,5);
      \node at (2,5.5){$P_1$};
      
        \draw[very thick,black] (0,1)--(0,2)--(0,3)--(1,3)--(1,4);
      \draw[very thick,dotted,black]  (1,4)--(1,5);
       \node at (1,5.5){$P_2$};
      
            \draw[very thick,gray] (-3,1)--(-2,1)--(-2,2)--(-2,3)--(-1,3)--(-1,4);
      \draw[very thick,dotted,black]  (-1,4)--(-1,5);
         \node at (-1,5.5){$P_3$};
  \end{tikzpicture}
  \quad \quad \quad \quad \quad \quad 
  \begin{tikzpicture}
    \foreach \i in {-3,...,2}
     \path[black] (\i,0) node{\i};
     \foreach \i in {1,...,4}
     \path[black] (-4,\i) node{\i};
    \foreach \i in {-3,...,2}
      \foreach \j in {1,...,4}{
        \draw[fill=black] (\i,\j) circle(2pt);
      };
      \draw (1,3) circle(4pt);
      \draw[very thick,black] (-1,1)--(-1,2)--(1,2)--(1,2)--(1,4);
      \draw[very thick,dotted,black]  (1,4)--(1,5);
      \node at (2,5.5){$P'_1$};
      
        \draw[very thick,dashed] (0,1)--(0,3)--(2,3)--(2,4);
      \draw[very thick,dotted,black]  (2,4)--(2,5);
       \node at (1,5.5){$P'_2$};
      
            \draw[very thick,gray] (-3,1)--(-2,1)--(-2,2)--(-2,3)--(-1,3)--(-1,4);
      \draw[very thick,dotted,black]  (-1,4)--(-1,5);
         \node at (-1,5.5){$P'_3$};
  \end{tikzpicture} 
$$

The \FA{Lindstr\"om-Gessel-Viennot} swap $\iota$ guarantees that 
\begin{enumerate}[1)]
\item \svwtwo{$\iota$ is an involution, that is, $\iota^2(P)=P$ for all $P$,} 
\item if $P'=\iota(P)=P$, then there are no mutually distinct paths in $P$ that intersect, and furthermore $P\in \mathcal{P}(\id,\lambda/\mu)$,
\item  if $P'=\iota(P)\neq P$, then  ${\rm sgn}(P)=-{\rm sgn}(P').$
\end{enumerate} 

\begin{definition}\label{def:Plabel} 
Let $\lambda/\mu$ be a skew diagram of size $n$. Given a tuple $P=(P_1,P_2,\ldots,P_{\ell(\lambda)})\in \mathcal{P}(\varepsilon,\lambda/\mu),$ label the eastward steps by ${ 1}_P,{2}_P,{ 3}_P,\ldots, n_P$ such that
\begin{enumerate}[1)]
\item the labels of \svwrevtwo{the} eastward steps in $P_i$ are smaller than the labels of \svwrevtwo{the} eastward steps in $P_j$ if $i<j$,
\item in each $P_i$, label the eastwards steps increasingly from left to right, bottom to top.
\end{enumerate} 
We identify each eastward step by its label. Moreover, for any path $P$, 
define the label exchange permutation ${\exch}_P$ in $\Sn_n$ such that $\exch_P(i)=j$ if the eastward step labelled by $i_P$ in $P$ is labelled by $j_P'$ in $\iota(P)=P'$. 
\end{definition}
$$
\begin{tikzpicture}
    \foreach \i in {-3,...,2}
     \path[black] (\i,0) node{\i};
     \foreach \i in {1,...,4}
     \path[black] (-4,\i) node{\i};
    \foreach \i in {-3,...,2}
      \foreach \j in {1,...,4}{
        \draw[fill=black] (\i,\j) circle(2pt);
      };
      \draw (1,3) circle(4pt);
      \draw[very thick,dashed] (-1,1)--(-1,2)--(0,2)--(1,2)--(1,3)--(2,3)--(2,4);
      \draw[very thick,dotted]  (2,4)--(2,5);
      \node at (2,5.5){$P_1$};
      \node at (-0.5,2.3){${ 1_P}$};
       \node at (0.5,2.3){${2_P}$};
        \node at (1.5,3.3){${ 3_P}$};
      
        \draw[very thick,black] (0,1)--(0,2)--(0,3)--(1,3)--(1,4);
      \draw[very thick,dotted,black]  (1,4)--(1,5);
       \node at (1,5.5){$P_2$};
       \node at (0.5,3.3){${4_P}$};
      
            \draw[very thick,gray] (-3,1)--(-2,1)--(-2,2)--(-2,3)--(-1,3)--(-1,4);
      \draw[very thick,dotted,black]  (-1,4)--(-1,5);
         \node at (-1,5.5){$P_3$};
         \node at (-2.5,1.3){${5_P}$};
         \node at (-1.5,3.3){${6_P}$};
  \end{tikzpicture}
  \quad \quad \quad \quad \quad \quad 
  \begin{tikzpicture}
    \foreach \i in {-3,...,2}
     \path[black] (\i,0) node{\i};
     \foreach \i in {1,...,4}
     \path[black] (-4,\i) node{\i};
    \foreach \i in {-3,...,2}
      \foreach \j in {1,...,4}{
        \draw[fill=black] (\i,\j) circle(2pt);
      };
      \draw (1,3) circle(4pt);
      \draw[very thick,black] (-1,1)--(-1,2)--(1,2)--(1,2)--(1,4);
      \draw[very thick,dotted,black]  (1,4)--(1,5);
      \node at (2,5.5){$P'_1$};
      \node at (-0.5,2.3){${3}_{P'}$};
       \node at (0.5,2.3){${4}_{P'}$};
       \node at (1.5,3.3){${2}_{P'}$};
      
        \draw[very thick,dashed] (0,1)--(0,3)--(2,3)--(2,4);
      \draw[very thick,dotted,black]  (2,4)--(2,5);
       \node at (1,5.5){$P'_2$};
       \node at (0.5,3.3){${1}_{P'}$};
      
            \draw[very thick,gray] (-3,1)--(-2,1)--(-2,2)--(-2,3)--(-1,3)--(-1,4);
      \draw[very thick,dotted,black]  (-1,4)--(-1,5);
         \node at (-1,5.5){$P'_3$};
         \node at (-2.5,1.3){${5}_{P'}$};
          \node at (-1.5,3.3){${ 6}_{P'}$};
  \end{tikzpicture} 
$$
$$\exch_P(1)=3, \exch_P(2)=4, \exch_P(3)=2, \exch_P(4)=1, \exch_P(5)=5, \exch_P(6)=6.$$

\begin{definition}\label{def:xdP}
Let $\lambda/\mu$ be a skew diagram of  size $n$. For any $P=(P_1,P_2,\ldots,P_{\ell(\lambda)})\in \mathcal{P}(\varepsilon,\lambda/\mu),$ and $\delta\in \Sn_n$, define 
$$x^{(\delta,P)}=x_{\hig(\delta({1})_P)}x_{\hig(\delta({ 2})_P)}\cdots x_{\hig(\delta({ n})_P)}.$$\end{definition} 

As an example, if $\delta=315462$ and $P$ is the above path \svw{tuple}, then  
\begin{align*} x^{(\delta,P)}&=x_{\hig(\delta({1})_P)}x_{\hig(\delta({ 2})_P)}\cdots x_{\hig(\delta({6})_P)}\\
&=x_{\hig({ 3}_P)}x_{\hig({ 1}_P)}x_{\hig({ 5}_P)}x_{\hig({4}_P)}x_{\hig({ 6}_P)}x_{\hig({2}_P)}\\
&=x_3x_2x_1x_3x_3x_2.
\end{align*}

\subsection{The main result}\label{subsec:main} To show our main result, we first need some lemmas. 

\begin{lemma}\label{lem:hmon1} Let $\lambda/\mu$ be a skew diagram of size $n$ and \FA{$\varepsilon\in \Sn_{\ell(\lambda)}$.} Then 
$$\delta\circ h_{[\lambda-\mu_\varepsilon+\varepsilon-\id]}=\sum_{P\in \mathcal{P}(\varepsilon,\lambda/\mu)}\sum_{\svwrev{\eta([\lambda-\mu_\varepsilon+\varepsilon-\id])=\delta([\lambda-\mu_\varepsilon+\varepsilon-\id])}} x^{(\eta,P)}.$$
\end{lemma}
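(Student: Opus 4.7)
The plan is to expand both sides in the underlying noncommutative polynomial ring and match monomials. Let $\alpha=\lambda-\mu_\varepsilon+\varepsilon-\id$. Since $h_\pi$ is one of the standard bases of $\NCSym$, Equation~\eqref{eq:delta} yields $\delta\circ h_{[\alpha]}=h_{\delta([\alpha])}$. Applying Lemma~\ref{lem:h} to the set partition $\delta([\alpha])$ then expresses the left-hand side as
\[
h_{\delta([\alpha])}=\sum_{\eta'}\sum_{(j_1,\ldots,j_n)} x_{j_{\eta'(1)}}\cdots x_{j_{\eta'(n)}},
\]
where $\eta'$ ranges over the permutations fixing every block of $\delta([\alpha])$ setwise and $(j_1,\ldots,j_n)\in\mathbb{N}^n$ ranges over tuples weakly increasing on each block of $\delta([\alpha])$.

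Next I would put the right-hand side into the same form. For any $P=(P_1,\ldots,P_{\ell(\lambda)})\in\mathcal{P}(\varepsilon,\lambda/\mu)$, the heights $h_k:=\hig(k_P)$ yield a tuple $(h_1,\ldots,h_n)\in\mathbb{N}^n$; by Definition~\ref{def:Plabel}, because $P_i$ has exactly $\alpha_i$ eastward steps with weakly increasing heights and the labels within each $P_i$ increase bottom-to-top, this tuple is weakly increasing on every block of $[\alpha]$. The correspondence $P\leftrightarrow(h_1,\ldots,h_n)$ is a bijection between $\mathcal{P}(\varepsilon,\lambda/\mu)$ and the set of such monotone tuples, and under it the monomial $x^{(\eta,P)}$ becomes $x_{h_{\eta(1)}}\cdots x_{h_{\eta(n)}}$, matching the generic summand above.

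Finally, I would match the two expansions by a change of summation variables. The aim is to rewrite the permutations $\eta'$ that fix each block of $\delta([\alpha])$ in terms of permutations $\eta$ satisfying $\delta([\alpha])=\eta([\alpha])$ via the natural composition with $\delta$, while simultaneously converting the monotonicity condition on $(j_1,\ldots,j_n)$ over $\delta([\alpha])$ into the monotonicity of $(h_1,\ldots,h_n)$ over $[\alpha]$. The main obstacle I anticipate is this bookkeeping: verifying that the resulting correspondence between indexing data on the two sides pairs off monomials without over- or under-counting, so that the two sums agree term by term. Once this coset/monotonicity translation is set up cleanly, the identity follows by a direct comparison of summands.
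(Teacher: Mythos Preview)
Your proposal is correct and follows the same route as the paper: both arguments rest on Lemma~\ref{lem:h} together with the observation that a tuple $P\in\mathcal{P}(\varepsilon,\lambda/\mu)$ is the same data as a sequence $(h_1,\ldots,h_n)$ weakly increasing on each block of $[\alpha]$, since $P_i$ has exactly $\alpha_i$ eastward steps with nondecreasing heights.

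The only difference is the order of operations. You first push $\delta$ inside via $\delta\circ h_{[\alpha]}=h_{\delta([\alpha])}$ and then invoke Lemma~\ref{lem:h} for the set partition $\delta([\alpha])$; this forces the extra bookkeeping you flag, because the blocks of $\delta([\alpha])$ are no longer consecutive intervals and the monotone tuples and block-fixing permutations must be re-indexed back through $\delta$. The paper instead applies Lemma~\ref{lem:h} directly to $h_{[\alpha]}$, where the block structure of $[\alpha]$ lines up with the labelling $1_P,\ldots,n_P$ of Definition~\ref{def:Plabel}, and only then acts by $\delta$ on the resulting monomial expansion. That ordering makes the coset $\{\eta:\eta([\alpha])=\delta([\alpha])\}$ appear immediately as $\delta$ composed with the block-fixing permutations of $[\alpha]$, so the ``obstacle'' you anticipate disappears. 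Either way the argument goes through.
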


\begin{proof}
	The $i$th part of $\lambda-\mu_\varepsilon+\varepsilon-\id$ has size $\lambda_i-\mu_{\varepsilon(i)}+\varepsilon(i)-i$ = $(\lambda_i-i) - (\mu_{\varepsilon(i)}-\varepsilon(i)) =$ the number of eastward \svwrev{$E$} steps in $P_i$. \svwrev{ Observe that the second summation is over all $\eta\in\mathfrak{S}_n$ that fixes blocks of $\delta([\lambda-\mu_\epsilon+\epsilon-\id])$.} Comparing this with Lemma \ref{lem:h} completes the proof.
\end{proof}

\begin{lemma}\label{lem:hmon2}
Let $\lambda/\mu$ be a skew diagram of size $n$ and $\varepsilon\in \Sn_{\ell(\lambda)}$. Then
$$\sum_{\delta\in \Sn_n}\frac{1}{(\lambda-\mu_\varepsilon+\varepsilon-\id)!}\delta \circ h_{[\lambda-\mu_\varepsilon+\varepsilon-\id]}=\sum_{P\in \mathcal{P}(\varepsilon,\lambda/\mu)}\sum_{\delta\in \Sn_n} x^{(\delta,P)}.$$
\end{lemma}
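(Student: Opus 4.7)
The plan is to derive this identity directly from Lemma~\ref{lem:hmon1} by summing over $\delta \in \Sn_n$ and then reorganizing the resulting triple sum. Writing $\pi = [\lambda - \mu_\varepsilon + \varepsilon - \id]$ for brevity, Lemma~\ref{lem:hmon1} gives
$$\sum_{\delta \in \Sn_n} \delta \circ h_{\pi} \;=\; \sum_{P\in \mathcal{P}(\varepsilon, \lambda/\mu)} \;\sum_{\delta \in \Sn_n}\; \sum_{\substack{\eta \in \Sn_n \\ \eta(\pi) = \delta(\pi)}} x^{(\eta, P)},$$
so the entire task reduces to counting, for each fixed path tuple $P$ and each fixed $\eta \in \Sn_n$, how many pairs $(\delta, \eta)$ in this triple sum contribute the monomial $x^{(\eta, P)}$.

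The key combinatorial point is that the stabilizer of $\pi$ inside $\Sn_n$, i.e.\ the subgroup of permutations fixing each block of $\pi$ setwise, has order exactly $(\lambda - \mu_\varepsilon + \varepsilon - \id)!$, because the block sizes of $\pi = [\lambda - \mu_\varepsilon + \varepsilon - \id]$ are precisely the components of the underlying tuple. Hence for each fixed $\eta$, the condition $\delta(\pi) = \eta(\pi)$ determines a left coset of this stabilizer and is satisfied by exactly $(\lambda - \mu_\varepsilon + \varepsilon - \id)!$ permutations~$\delta$. The monomial $x^{(\eta, P)}$ therefore appears exactly $(\lambda - \mu_\varepsilon + \varepsilon - \id)!$ times in the inner double sum, giving
$$\sum_{\delta \in \Sn_n} \delta \circ h_{\pi} \;=\; (\lambda - \mu_\varepsilon + \varepsilon - \id)! \sum_{P\in \mathcal{P}(\varepsilon, \lambda/\mu)} \sum_{\eta \in \Sn_n} x^{(\eta, P)},$$
and dividing both sides by $(\lambda - \mu_\varepsilon + \varepsilon - \id)!$ yields the desired identity.

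There is no genuine obstacle here; the argument is essentially an orbit-counting step applied to the output of Lemma~\ref{lem:hmon1}. The only point requiring care is confirming that the factorial appearing on the left of the claim matches the stabilizer size on the right, which is built into the definitions of $[\alpha]$ and $\alpha!$ recalled just before Equation~\eqref{eq:newskew}.
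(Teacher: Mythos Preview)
Your proof is correct and follows essentially the same approach as the paper: both apply Lemma~\ref{lem:hmon1}, sum over $\delta\in\Sn_n$, and then observe that for each fixed $\eta$ and $P$ the coset condition $\delta(\pi)=\eta(\pi)$ is satisfied by exactly $(\lambda-\mu_\varepsilon+\varepsilon-\id)!$ permutations $\delta$, so this factorial cancels. Your framing in terms of the stabilizer of $\pi$ and its cosets is slightly more explicit than the paper's, but the argument is the same.
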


\begin{proof}
	Note \svwrevtwo{the following.} $$|\{\eta\in\Sn_n:\eta([\lambda-\mu_\varepsilon+\varepsilon-\id])=\delta([\lambda-\mu_\varepsilon+\varepsilon-\id])\}|=(\lambda-\mu_\varepsilon+\varepsilon-\id)!$$ Considering Lemma \ref{lem:hmon1}, for each $\eta\in\Sn_n$ and $P\in(\varepsilon,\lambda/\mu)$, the number of $\delta\in \Sn_n$ that gives ${x}^{(\eta,P)}$ is $(\lambda-\mu_\varepsilon+\varepsilon-\id)!$. Therefore,
	\begin{align*}
	LHS = & \sum_{\delta\in\Sn_n}\sum_{P\in \mathcal{P}(\varepsilon,\lambda/\mu)}\sum_{\svwrev{\eta([\lambda-\mu_\varepsilon+\varepsilon-\id])=\delta([\lambda-\mu_\varepsilon+\varepsilon-\id])}}\frac{x^{(\eta,P)}}{(\lambda-\mu_\varepsilon+\varepsilon-\id)!}\\
	&= \sum_{\eta\in\Sn_n}\sum_{P\in \mathcal{P}(\varepsilon,\lambda/\mu)}\frac{(\lambda-\mu_\varepsilon+\varepsilon-\id)!x^{(\eta,P)}}{(\lambda-\mu_\varepsilon+\varepsilon-\id)!}=RHS.
	\end{align*}
\end{proof}

\begin{lemma}\label{lem:swapmon}
Let $\lambda/\mu$ be a skew diagram of size $n$ and $\varepsilon\in \Sn_{\ell(\lambda)}$. Let $P\in \mathcal{P}(\varepsilon,\lambda)$ and $\iota(P)=P'$. 
 For any permutation $\delta\in \Sn_n$, we have  the following. 
\begin{enumerate}[1)]
\item $\hig(i_P)=\hig(\exch_P(i)_{P'})$
\item $x^{(\delta,P)}=x^{(\exch_P\delta,P')}$
\item $$\sum_{(\delta,P)\in \Sn_n\times \mathcal{P}(\lambda/\mu)} {{\rm sgn}(P)}x^{(\delta,P)}
	=\sum_{P\in\mathcal{P}(\id,\lambda/\mu)\atop P\text{ has no self-intersection}}\sum_{\delta\in\Sn_n}{x}^{(\delta,P)}$$
\end{enumerate}
\end{lemma}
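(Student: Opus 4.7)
The plan is to use the Lindstr\"om--Gessel--Viennot swap $\iota$ as a sign-reversing involution on $\Sn_n\times\mathcal{P}(\lambda/\mu)$; parts (1) and (2) are bookkeeping lemmas that make this involution easy to apply in part (3).

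For part (1), I would observe that $\iota$ only reroutes the tails of $P_i$ and $P_j$ past their last common lattice point, so $P$ and $P'=\iota(P)$ consist of the \emph{same} multiset of unit eastward segments in the plane. By Definition~\ref{def:Plabel} the labels in $P'$ simply re-enumerate those physical segments according to the $P'$-order, and $\exch_P$ records precisely this relabeling. Hence the step labelled $i_P$ in $P$ and the step labelled $\exch_P(i)_{P'}$ in $P'$ are the \emph{same} horizontal edge, which has a well-defined height. For part (2), I would write $x^{(\delta,P)}=\prod_{k}x_{\hig(\delta(k)_P)}$ and $x^{(\exch_P\delta,P')}=\prod_{k}x_{\hig(\exch_P(\delta(k))_{P'})}$, and apply (1) with $i=\delta(k)$ to see these products agree factor by factor.

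For part (3), the core step is to show that $(\delta,P)\mapsto(\exch_P\delta,\iota(P))$ is an involution on $\Sn_n\times\mathcal{P}(\lambda/\mu)$. Involutivity reduces to $\iota\circ\iota=\id$ (built into the definition) together with $\exch_{\iota(P)}=\exch_P^{-1}$, which holds because reapplying $\iota$ to $P'$ selects the same pair of paths and the same exchange point, reversing the earlier relabeling. On pairs with $P\neq\iota(P)$ one has ${\rm sgn}(\iota(P))=-{\rm sgn}(P)$ (the swap composes the underlying $\varepsilon$ with a transposition) while $x^{(\delta,P)}=x^{(\exch_P\delta,\iota(P))}$ by (2), so such contributions cancel in the signed sum. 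A tuple $P$ is a fixed point of $\iota$ precisely when no two of its paths intersect. For such $P$ the strictly decreasing sequences $\mu_1-1>\mu_2-2>\cdots$ of start $x$-coordinates and $\lambda_1-1>\lambda_2-2>\cdots$ of end $x$-coordinates force $\varepsilon=\id$, since any inversion $\varepsilon(i)>\varepsilon(j)$ with $i<j$ would place the start of $P_i$ strictly left of the start of $P_j$ while its end sits strictly right, forcing an intersection by a simple monotonicity argument on N/E-paths. Hence $P\in\mathcal{P}(\id,\lambda/\mu)$ and ${\rm sgn}(P)=+1$, and summing over fixed points and over all $\delta$ recovers the right-hand side.

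The main obstacle will be establishing $\exch_{\iota(P)}=\exch_P^{-1}$ cleanly: one must verify that $\iota$ applied to $P'$ identifies the same indices $i,j$ and the same last intersection point $(a,b)$ so that the physical swap is undone. This requires checking carefully that the ``largest index'' selection rules in Definition~\ref{def:LGVswap} are symmetric under $P\leftrightarrow P'$, and then tracing the labeling convention of Definition~\ref{def:Plabel} through this symmetry to confirm that the induced label permutation is inverted.
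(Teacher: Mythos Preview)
Your arguments for (1) and (2) are the same as the paper's: the physical edge is unchanged, so heights match, and then (2) is (1) applied with $i=\delta(k)$ in each factor.

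For (3) your approach is correct but differs from the paper's. The paper does \emph{not} check that $(\delta,P)\mapsto(\exch_P\delta,\iota(P))$ is an involution on the product. Instead it writes
\[
2\sum_{(\delta,P)}{\rm sgn}(P)\,x^{(\delta,P)}
=\sum_{(\delta,P)}{\rm sgn}(P)\,x^{(\delta,P)}
+\sum_{(\delta,P)}{\rm sgn}(P')\,x^{(\exch_P\delta,P')},
\]
noting that the second sum is a reindexing of the first because $\delta\mapsto\exch_P\delta$ is a bijection of $\Sn_n$ for each fixed $P$ and $P\mapsto\iota(P)$ is a bijection of $\mathcal{P}(\lambda/\mu)$. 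Then, using (2) and the sign reversal, the $P\neq P'$ terms in the two sums cancel, leaving $2$ times the fixed-point sum. This sidesteps entirely the verification of $\exch_{\iota(P)}=\exch_P^{-1}$.

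Your approach also works, but you overstate the ``main obstacle''. The identity $\exch_{P'}=\exch_P^{-1}$ does \emph{not} require re-examining the selection rules in Definition~\ref{def:LGVswap}. Once $\iota(\iota(P))=P$ is granted (as the paper asserts before the definition), the identity is formal: $\exch_P$ sends the $P$-label of a physical edge to its $P'$-label, and by the same definition $\exch_{P'}$ sends the $P'$-label to its $\iota(P')=P$-label; composing gives the identity. So the only genuine work is that $\iota$ itself is an involution on $\mathcal{P}(\lambda/\mu)$, which the paper (and the Lindstr\"om--Gessel--Viennot literature) takes as established, and which is where the careful choice of ``largest $i$, largest $j$, last intersection'' actually matters.
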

\begin{proof} 
\begin{enumerate}[1)]
\item This follows directly from the definition of $\exch_P$ since it preserve the height. 
\item Note that 
\begin{align*}
x^{(\delta,P)}&=x_{\hig(\delta({1})_P)}x_{\hig(\delta({ 2})_P)}\cdots x_{\hig(\delta({ n})_P)}\\
& =x_{\hig(\exch_P\delta({1})_{P'})}x_{\hig(\exch_P\delta({ 2})_{P'})}\cdots x_{\hig(\exch_P\delta({ n})_{P'})}& ({\rm by ~ Part\ 1})\\
&=x^{(\exch_P\delta,P')}.
\end{align*} 
\item Considering the \FA{Lindstr\"om-Gessel-Viennot} swap $\iota$, if $\iota(P)=P'\neq P$, 
then ${\rm sgn}(P)=-{\rm sgn}(P')$. Thus by \svw{the second part,}
$${\rm sgn}(P)x^{(\delta,P)}=-{\rm sgn}(P')x^{(\exch_P\delta,P')}.$$ Therefore, 
\svwrev{since $\iota$ is an involution, all terms in the sum cancel except those corresponding to nonintersecting paths.}
\end{enumerate} 
\end{proof} 

We can now  state our main result. 

\begin{theorem}\label{the:RSrefines}
Let $\lambda/\mu$ be a skew diagram of size $n$. Then 
$$\sum_{\delta\in \Sn_n}s_{(\delta,\lambda/\mu)}=S_{\lambda/\mu}.$$ 
In particular, when $\mu=\emptyset$, we have that
$$\sum_{\delta\in \Sn_n}s_{(\delta,\lambda)} = \sum_{\svw{t: \sh(t)=\lambda}}s_{t} = \sum_{\svwrevtwo{\textrm{distinct classes} \atop [t]: \sh(t)=\lambda}}s_{[t]}= S_\lambda.$$
\end{theorem}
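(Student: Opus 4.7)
The plan is to chain together the lemmas from the previous subsection to evaluate the left-hand side, and then identify the result with $S_{\lambda/\mu}$ via the classical Lindstr\"om--Gessel--Viennot picture. Starting from Equation~\eqref{eq:newskew}, I would sum over $\delta\in\Sn_n$, interchange the order of summation, and apply Lemma~\ref{lem:hmon2} to the inner sum, turning the expression into
$$\sum_{\varepsilon\in\Sn_{\ell(\lambda)}}{\rm sgn}(\varepsilon)\sum_{P\in\mathcal{P}(\varepsilon,\lambda/\mu)}\sum_{\delta\in\Sn_n}x^{(\delta,P)}.$$
Because ${\rm sgn}(P)={\rm sgn}(\varepsilon)$ whenever $P\in\mathcal{P}(\varepsilon,\lambda/\mu)$, the first two sums collapse into a single sign-weighted sum over all of $\mathcal{P}(\lambda/\mu)$.

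Next I would apply Lemma~\ref{lem:swapmon}(3) to cancel in pairs all path tuples related by the Lindstr\"om--Gessel--Viennot swap, reducing the expression to
$$\sum_{P\in\mathcal{P}(\id,\lambda/\mu),\,P\text{ non-self-intersecting}}\sum_{\delta\in\Sn_n}x^{(\delta,P)}.$$
At this point I would construct the bijection $P\leftrightarrow T$ between non-self-intersecting tuples in $\mathcal{P}(\id,\lambda/\mu)$ and ${\rm SSYT}(\lambda/\mu)$: fill the $j$-th box from the left of row $i$ of $\lambda/\mu$ with the height of the $j$-th eastward step of $P_i$. The $i$-th path has $(\lambda_i-i)-(\mu_i-i)=\lambda_i-\mu_i$ eastward steps, exactly the number of boxes in row $i$ of $\lambda/\mu$. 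Row weak-increase is automatic since heights along any northeast lattice path are weakly increasing, while the non-intersection of the tuple is precisely what forces strict column increase. Under this bijection the labelling of eastward steps from Definition~\ref{def:Plabel} matches the natural row-by-row labelling of the boxes of $T$, so $x^{(\delta,P)}=x^{(\delta,T)}$ in the sense of Definition~\ref{def:RS}, and the remaining double sum is exactly $S_{\lambda/\mu}$.

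For the particular case $\mu=\emptyset$, the further equalities should follow quickly. The map $t\mapsto \delta_t$ is a bijection from Young tableaux of shape $\lambda$ of size $n$ to $\Sn_n$, and $s_t=\delta_t\circ s_{[\lambda]}=s_{(\delta_t,\lambda)}$ by Equation~\eqref{eq:schurt} and Equation~\eqref{eq:skewNCSchur}, which gives $\sum_{t:\sh(t)=\lambda}s_t=\sum_{\delta\in\Sn_n}s_{(\delta,\lambda)}$. The identity $\sum_{[t]:\sh(t)=\lambda}s_{[t]}=\sum_{t:\sh(t)=\lambda}s_t$ is then immediate from Definition~\ref{def:schurt}, since expanding $s_{[t]}=\sum_{\tilde t\sim t}s_{\tilde t}$ and summing over row-equivalence classes sweeps over every Young tableau of shape $\lambda$ exactly once.

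The step I expect to be the main obstacle is the path-to-tableau translation in the middle paragraph: while the Lindstr\"om--Gessel--Viennot correspondence is classical, the specific height and labelling conventions fixed in Definitions~\ref{def:Plabel} and \ref{def:xdP} --- in particular the way $\delta$ acts on the labels via $x^{(\delta,P)}=x_{\hig(\delta(1)_P)}\cdots x_{\hig(\delta(n)_P)}$ --- must be matched against the ordering of boxes used to define $x^{(\delta,T)}$ in Definition~\ref{def:RS}. I would verify that matching directly rather than appeal to a black-box version of the LGV lemma, and I would also want to confirm the text's claim that a non-self-intersecting tuple is automatically in $\mathcal{P}(\id,\lambda/\mu)$, which follows because both the ending $x$-coordinates $\lambda_i-i$ and any non-crossing assignment of the starting $x$-coordinates $\mu_{\varepsilon(i)}-\varepsilon(i)$ are forced to be strictly decreasing in $i$, pinning $\varepsilon=\id$.
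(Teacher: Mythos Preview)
Your proposal is correct and follows essentially the same chain of lemmas as the paper's proof: expand via Equation~\eqref{eq:newskew}, apply Lemma~\ref{lem:hmon2}, collapse to a signed sum over $\mathcal{P}(\lambda/\mu)$, cancel via Lemma~\ref{lem:swapmon}(3), and identify the surviving terms with $S_{\lambda/\mu}$. The only difference is cosmetic: where you spell out the path-to-tableau bijection and the label matching directly, the paper simply cites the proof of \cite[Theorem~7.16.1]{EC2}; and the paper leaves the ``in particular'' equalities implicit, whereas you justify them explicitly.
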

\begin{proof}
We have that
\begin{align*}
	\sum_{\delta\in\Sn_n}\svw{s}_{(\delta,\lambda/\mu)}&=\sum_{\delta\in\Sn_n}\sum_{\varepsilon\in\Sn_{\ell(\lambda)}}{\rm sgn}(\varepsilon) \frac{1}{(\lambda-\mu_\varepsilon+\varepsilon-\id)!}\delta\circ h_{[\lambda-\mu_\varepsilon+\varepsilon-\id]}\\
	&=\sum_{\varepsilon\in\Sn_{\ell(\lambda)}}\sum_{P\in \mathcal{P}(\varepsilon,\lambda/\mu)}\sum_{\delta\in\Sn_n}{\rm sgn}(\varepsilon){x}^{(\delta,P)} &(\text{by Lemma~\ref{lem:hmon2}})\\
	&=\sum_{(\delta,P)\in \Sn_n\times \mathcal{P}(\lambda/\mu)} {{\rm sgn}(P)}x^{(\delta,P)} 
	\\
	&=\sum_{P\in\mathcal{P}(\id,\lambda/\mu)\atop P\text{ has no self-intersection}}\sum_{\delta\in\Sn_n}{x}^{(\delta,P)} &(\text{by Lemma~\ref{lem:swapmon} 3)})\\
	&=\sum_{T\in {\rm SSYT}(\lambda/\mu)}\sum_{\delta\in\Sn_n}{x}^{(\delta,T)}\\
	&=S_{\lambda/\mu}
	\end{align*}
	where the penultimate equality follows from, for example, the proof of Theorem 7.16.1 in \cite{EC2}.\end{proof}
	
\begin{corollary}\label{cor:rhoRS}
Let $\lambda /\mu$ be a skew diagram of size $n$. Then
$$\rho (S_{\lambda /\mu}) = n! s _{\lambda/\mu}.$$
\end{corollary}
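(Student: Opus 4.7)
The proof is a direct consequence of two results already established: Theorem~\ref{the:RSrefines}, which expresses $S_{\lambda/\mu}$ as the sum $\sum_{\delta \in \Sn_n} s_{(\delta, \lambda/\mu)}$, and Lemma~\ref{lem:rhoschur}, which computes $\rho(s_{(\delta, \lambda/\mu)}) = s_{\lambda/\mu}$ independently of the choice of permutation $\delta$. The plan is simply to apply $\rho$ to both sides of the identity in Theorem~\ref{the:RSrefines}.

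First I would invoke Theorem~\ref{the:RSrefines} to rewrite $S_{\lambda/\mu}$ as a sum over $\Sn_n$. Then, since $\rho$ is a linear map from $\bQ\langle\langle x_1, x_2, \ldots \rangle\rangle$ to $\bQ[[x_1, x_2, \ldots]]$, I can distribute it across the finite sum to obtain
\begin{equation*}
\rho(S_{\lambda/\mu}) = \rho\!\left(\sum_{\delta \in \Sn_n} s_{(\delta,\lambda/\mu)}\right) = \sum_{\delta \in \Sn_n} \rho\bigl(s_{(\delta,\lambda/\mu)}\bigr).
\end{equation*}
Next I would apply Lemma~\ref{lem:rhoschur} to each summand, which gives $\rho(s_{(\delta,\lambda/\mu)}) = s_{\lambda/\mu}$ for every $\delta \in \Sn_n$. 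Since the summand no longer depends on $\delta$ and $|\Sn_n| = n!$, the sum collapses to $n!\, s_{\lambda/\mu}$, yielding the claim.

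There is no serious obstacle here, as both inputs have already been proved. The only small point worth noting in the writeup is that linearity of $\rho$ over finite sums is immediate from its definition as the algebra map letting variables commute, so no additional justification is required.
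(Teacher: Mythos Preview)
Your proposal is correct and follows exactly the same approach as the paper, which states only that the result ``follows immediately from Theorem~\ref{the:RSrefines} and Lemma~\ref{lem:rhoschur}.'' You have simply spelled out the two-line computation in more detail.
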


\begin{proof} This follows immediately from Theorem~\ref{the:RSrefines} and Lemma~\ref{lem:rhoschur}.
\end{proof}


There are two celebrated formulae involving classical skew Schur functions. The first of these is the Littlewood-Richardson rule, which expresses a skew Schur function $s_{\lambda/\mu}$ as a sum of Schur functions \svw{$s_\nu$}
$$s_{\lambda/\mu} = \sum _\nu c_{\mu\nu}^{\lambda} s_\nu.$$ The $c_{\lambda\mu}^{\nu}$ are nonnegative integers known as Littlewood-Richardson coefficients, and the interested reader may consult \cite{EC2} for a myriad of ways to compute them combinatorially. The second of these is the coproduct formula for \svw{a Schur function $s_\lambda$ in terms of Schur functions $s_\mu$}
$$\Delta (s_\lambda) = \sum _\mu s_\mu \otimes s_{\lambda/\mu}.$$We conclude \svw{this section} by showing that analogous results hold for Rosas-Sagan skew Schur functions, after we prove the following lemma.

\begin{lemma}\label{lem:skew}Kostka numbers satisfy $$K_\gamma^{\lambda/\mu}=\displaystyle\sum_\nu c^\lambda_{\mu\nu}K_\gamma^\nu$$where \svw{$\lambda, \mu, \gamma, \nu$ are integer partitions and} $c_{\mu\nu}^\lambda$ are the Littlewood-Richardson coefficients.
\end{lemma}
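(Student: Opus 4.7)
The plan is to reduce this identity about Kostka numbers to the classical Littlewood-Richardson rule in $\Sym$ by comparing two different expansions of the skew Schur function $s_{\lambda/\mu}$ in the monomial basis.

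First, I would recall two standard expansions. On one hand, the Littlewood-Richardson rule gives
$$s_{\lambda/\mu} = \sum_\nu c_{\mu\nu}^\lambda s_\nu,$$
and on the other hand, Schur functions expand in the monomial basis as $s_\nu = \sum_\gamma K_\gamma^\nu m_\gamma$ and skew Schur functions expand as $s_{\lambda/\mu} = \sum_\gamma K_\gamma^{\lambda/\mu} m_\gamma$, both of these being standard consequences of the semistandard Young tableau interpretation of (skew) Schur functions (see for example \cite{EC2}).

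Next, I would substitute the first monomial expansion of each $s_\nu$ into the Littlewood-Richardson expansion to obtain
$$s_{\lambda/\mu} = \sum_\nu c_{\mu\nu}^\lambda \sum_\gamma K_\gamma^\nu m_\gamma = \sum_\gamma \left( \sum_\nu c_{\mu\nu}^\lambda K_\gamma^\nu \right) m_\gamma.$$
Comparing this with the direct expansion $s_{\lambda/\mu} = \sum_\gamma K_\gamma^{\lambda/\mu} m_\gamma$ and using that $\{m_\gamma\}$ is a basis of $\Sym$ yields the claimed identity
$$K_\gamma^{\lambda/\mu} = \sum_\nu c_{\mu\nu}^\lambda K_\gamma^\nu.$$

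There is no real obstacle here: the statement is essentially the observation that the Littlewood-Richardson rule is compatible with the passage to the monomial basis via Kostka numbers. The only thing to be careful about is citing the correct classical facts; the argument itself is a one-line manipulation once those are in place.
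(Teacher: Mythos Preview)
Your proof is correct and follows essentially the same approach as the paper: the paper's proof simply cites the three identities $s_\nu=\sum_\gamma K_\gamma^\nu m_\gamma$, $s_{\lambda/\mu}=\sum_\gamma K^{\lambda/\mu}_\gamma m_\gamma$, and $s_{\lambda/\mu}=\sum_\nu c_{\mu\nu}^\lambda s_\nu$ and leaves the coefficient comparison implicit, whereas you spell it out explicitly.
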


\begin{proof}
	This follows from the well-known identities \cite[Equations (7.35), (7.36), (A1.142)]{EC2}  $\displaystyle s_\nu=\sum_\gamma K_\gamma^\nu m_\gamma$, $\displaystyle s_{\lambda/\mu}=\sum_\gamma K^{\lambda/\mu}_\gamma m_\gamma$ and $\displaystyle s_{\lambda/\mu}=\sum_\nu c_{\mu\nu}^\lambda s_\nu$.
\end{proof}

\begin{proposition}\label{prop:RSLR}
	Rosas-Sagan skew Schur functions can be written as a positive sum of Rosas-Sagan Schur functions, namely, \svw{if $\lambda/\mu$ is a skew diagram of size $n$, then}
	$$S_{\lambda/\mu}=\sum_{\svw{\nu \vdash n}} c^\lambda_{\mu\nu}S_\nu$$where \svw{$c_{\mu\nu}^\lambda$ are the Littlewood-Richardson coefficients.}
\end{proposition}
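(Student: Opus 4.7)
The plan is to prove this proposition by a direct substitution argument using the two lemmas already established. The key observation is that Lemma~\ref{lem:Schur} gives a monomial expansion of Rosas-Sagan skew Schur functions in terms of the $m$-basis of $\NCSym$ with coefficients that depend linearly on Kostka numbers, and Lemma~\ref{lem:skew} expresses skew Kostka numbers linearly in ordinary Kostka numbers with Littlewood-Richardson coefficients. So the proof should just be a matter of chaining these two expansions together.

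First I would apply Lemma~\ref{lem:Schur} to write
$$S_{\lambda/\mu}=\sum_{\gamma\vdash n}\gamma!\,K_{\gamma}^{\lambda/\mu}\sum_{\pi\vdash[n]\atop \lambda(\pi)=\gamma}m_{\pi}.$$
Next I would substitute the identity of Lemma~\ref{lem:skew} to replace $K_{\gamma}^{\lambda/\mu}$ by $\sum_{\nu}c^{\lambda}_{\mu\nu}K_{\gamma}^{\nu}$, obtaining
$$S_{\lambda/\mu}=\sum_{\gamma\vdash n}\gamma!\left(\sum_{\nu\vdash n}c^{\lambda}_{\mu\nu}K_{\gamma}^{\nu}\right)\sum_{\pi\vdash[n]\atop \lambda(\pi)=\gamma}m_{\pi}.$$
Finally I would interchange the order of summation, pulling the sum over $\nu$ outside, and recognize the inner sum as another application of Lemma~\ref{lem:Schur}, this time to $S_{\nu}$, which yields the desired expression $\sum_{\nu\vdash n}c^{\lambda}_{\mu\nu}S_{\nu}$.

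There is essentially no main obstacle here: both lemmas are already in hand, and the argument is a one-step rewriting in the $m$-basis, using the fact that the set of elements $\sum_{\lambda(\pi)=\gamma}m_{\pi}$ (for $\gamma\vdash n$) are linearly independent in $\NCSym^{n}$, so the identity of coefficients at the level of $\NCSym$ reduces to the scalar identity provided by Lemma~\ref{lem:skew}. The only thing to double-check when writing it out is bookkeeping of the $\nu\vdash n$ range, which is forced since $\lambda/\mu$ has size $n$ and $c^\lambda_{\mu\nu}=0$ unless $|\nu|=|\lambda|-|\mu|=n$.
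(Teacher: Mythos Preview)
Your proposal is correct and follows exactly the same route as the paper's proof: apply Lemma~\ref{lem:Schur} to expand $S_{\lambda/\mu}$ in the $m$-basis, substitute Lemma~\ref{lem:skew} for the skew Kostka numbers, swap the sums, and recognize $S_\nu$ via Lemma~\ref{lem:Schur} again. The paper writes this as a three-line display with no additional commentary; your extra remarks on the range $\nu\vdash n$ and linear independence are sound but not needed for the argument.
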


\begin{proof}
	It follows from Lemma \ref{lem:Schur} and Lemma \ref{lem:skew} that
	\begin{align*}
	S_{\lambda/\mu}&=\sum_{\svwrevtwo{\gamma \vdash n}} \gamma! K^{\lambda/\mu}_\gamma \sum_{\svwrevtwo{\pi \vdash [n]} \atop\lambda(\pi)=\gamma}m_\pi\\
	&=\sum_{\svwrevtwo{\gamma \vdash n}}\gamma!\sum_{\svwrevtwo{\nu \vdash n}} c_{\mu\nu}^\lambda K_\gamma^\nu \sum_{\svwrevtwo{\pi \vdash [n]} \atop\lambda(\pi)=\gamma}m_\pi\\
	&=\sum_{\svwrevtwo{\nu \vdash n}} c^\lambda_{\mu\nu}S_\nu.
	\end{align*}
\end{proof}

\svwrev{For the following we use the coproduct definition on $\NCSym$ as given in \cite[Section 2.4]{28authors}.}


\begin{proposition}\label{prop:RScoprod}\svwtwo{Let $\Delta_{k,n-k}:\NCSym^n\to\NCSym^k\otimes\NCSym^{n-k}$ be the coproduct on \svwrev{$\NCSym$ restricted to the homogeneous component $\NCSym^k\otimes\NCSym^{n-k}$.} Then for $\lambda\vdash n$, we have that}
	$$\Delta_{k, n-k}(S_\lambda) = \binom{n}{k}\sum_{\mu\vdash k \atop \svw{\mu \subseteq\lambda}} S_\mu\otimes S_{\lambda/\mu}.$$
\end{proposition}

\begin{proof}
	Let $T\in\rm{SSYT}(\lambda)$ with boxes labelled $T_1,  \ldots,T_n$. Consider $\mu\subseteq \lambda$ with $\mu\vdash k$ and the boxes in $T$ corresponding to the shapes $\mu$, $\lambda/\mu$ are $\{T_{i_1},\ldots,T_{i_k}\}$ and $\{T_{j_1},\ldots,T_{j_{n-k}}\}$ respectively. Let $\tau\in\Sn_n$. Then we denote $x_{c(T_{i_{\tau(1)}})}\cdots x_{c(T_{i_{\tau(k)}})}$ and $x_{c(T_{j_{\tau(1)}})}\cdots x_{c(T_{j_{\tau(n-k)}})}$ by $x^{(\tau|_\mu,T)}$ and $x^{(\tau|_{\lambda/\mu},T)}$ respectively.
	
	For convenience, we set $\{i_1,\ldots,i_k\}$ to be $\{1,\ldots,k\}$ and $\{j_1,\ldots,j_{n-k}\}$ to be $\{k+1,\ldots,n\}$. In this case, $\tau|_{\mu}={\rm std}(\tau(1)\cdots\tau(k))$ and $\tau|_{\lambda/\mu}={\rm std}(\tau(k+1)\cdots\tau(n))$, that is, standardized using their relative order.
	
	Consider the coproduct
	$$\displaystyle S_\lambda(\mathbf{x,y}) = \sum_{\tau\in\Sn_n}\sum_{T\in \rm{SSYT}(\lambda)}(x,y)^{(\tau,T)}=\sum_{\tau\in\Sn_n}\sum_{\mu\subseteq\lambda}\sum_{T\in \rm{SSYT}(\mu)}\sum_{S\in \rm{SSYT}(\lambda/\mu)}x^{(\tau|_\mu,T)}y^{(\tau|_{\lambda/\mu},S)}.$$
	Hence,
	$$\Delta_{k,n-k}(S_\lambda) = \sum_{\tau\in\Sn_n} \sum_{\mu\vdash k \atop \mu\subseteq\lambda}\left( \sum_{T\in \rm{SSYT}(\mu)} x^{(\tau|_\mu,T)}\otimes \sum_{T\in \rm{SSYT}(\lambda/\mu)}x^{(\tau|_{\lambda/\mu},T)}\right).$$
	For each $\tau'\in \Sn_k,\tau''\in\Sn_{n-k}$, the number of $\tau\in\Sn_n$ such that $\tau|_\mu=\tau'$ and $\tau|_{\lambda/\mu}=\tau''$ is $\displaystyle\binom{n}{k}$. Therefore, the equation above is equal to
	$$\binom{n}{k}\sum_{\tau'\in\Sn_k}\sum_{\tau''\in\Sn_{n-k}}\sum_{\mu\vdash k \atop \mu\subseteq\lambda}\left( \sum_{T\in \rm{SSYT}(\mu)} x^{(\tau',T)}\otimes \sum_{T\in \rm{SSYT}(\lambda/\mu)}x^{(\tau'',T)} \right)=\binom{n}{k}\sum_{\mu\vdash k \atop \mu\subseteq\lambda} S_\mu\otimes S_{\lambda/\mu}.$$
\end{proof}
\section{Immaculate and noncommutative ribbon Schur functions}\label{sec:NSym}
In our final section we turn our attention to another Hopf algebra of  $\bQ \langle \langle x_1, x_2, \ldots \rangle\rangle$, but before we do we need a few more combinatorial concepts. Given a positive integer $n$, we say that a \emph{composition} $\alpha = \alpha _1 \alpha _2 \cdots \alpha _{\ell(\alpha)}$ of $n$ is an ordered list of positive integers whose sum is $n$. We denote this by $\alpha \vDash n$, call the $\alpha _i$ the \emph{parts} of $\alpha$, $\ell(\alpha)$ the \emph{length} of $\alpha$, and $n$ the \emph{size} of $\alpha$. We define $\lambda(\alpha)$ to be the \svw{integer} partition obtained from $\alpha$ by writing the parts of $\alpha$ in weakly decreasing order. We define $\alpha !$ and and $[\alpha]$ as in the previous section for weak compositions. Given two compositions $\alpha=\alpha_1\alpha_2 \cdots\alpha_{\ell(\alpha)}$ and $\beta=\beta_1\beta _2 \cdots\beta_{\ell(\beta)}$, the \textit{concatenation} of $\alpha$ and $\beta$ is $\alpha\cdot\beta = \alpha_1\alpha_2\cdots\alpha_{\ell(\alpha)}\beta_1\beta _2\cdots\beta_{\ell(\beta)}$, while their \textit{near concatenation} is $\alpha\odot\beta =  \alpha_1\alpha_2\cdots(\alpha_{\ell(\alpha)}+\beta_1)\beta _2\cdots\beta_{\ell(\beta)}$. We say $\alpha$ is a \textit{coarsening} of $\beta$  (or equivalently $\beta$ is a \textit{refinement} of $\alpha$), denoted by $\alpha \succcurlyeq\beta$, if we can obtain the parts of $\alpha$ in order by adding together adjacent parts of $\beta$ in order.

\begin{example} \label{ex:comps} If $\alpha = 2312 \vDash 8$, then $\lambda (\alpha) =3221$, $\alpha ! = 2!3!1!2! = 24$, and $[\alpha ] = [2] \slashp [3] \slashp [1] \slashp [2] = 12/345/6/78$. If $\beta =12$, then $\alpha \cdot \beta = 231212$, $\alpha \odot \beta = 23132$ and $23132 \succcurlyeq 231212$.
\end{example}

We can now define the last Hopf algebra of interest to us, the graded \emph{Hopf algebra of noncommutative symmetric functions}, $\NSym$,
$$\NSym  = \NSym ^0 \oplus \NSym ^1 \oplus \cdots \subset \bQ \langle \langle x_1, x_2, \ldots \rangle\rangle$$where $\NSym ^0 = \spam \{1\}$ and {the} $n$th graded piece for $n\geq 1$ has the following original bases \cite{GKLLRT}
\begin{align*}
\svw{\NSym ^n}&= \spam\{ \nch_\alpha \suchthat \alpha \vDash n\} = \spam\{ \ncr_\alpha \suchthat \alpha\vDash n\}
\end{align*}where these functions are defined as follows, given a composition $\alpha = \alpha _1 \alpha _2 \cdots \alpha _{\ell(\alpha)} \vDash n$.

The \emph{complete homogeneous noncommutative symmetric function}, $\nch_\alpha$, is given by
\svw{$$\nch_\alpha = \nch_{\alpha _1}\nch_{\alpha _2}\cdots \nch_{\alpha _{\ell(\alpha)}}$$where \svwrevtwo{$\nch_{i} = \sum_{j_1\leq j_2\leq\cdots \leq j_{i}} x_{j_1}x_{j_2}\cdots x_{j_{i}}$.}}

\begin{example}\label{ex:nccompletes}
$\nch_{12} = (x_1+x_2+\cdots)(x_1x_2+x_1^2+ \cdots)$
\end{example}

The \emph{noncommutative ribbon Schur function}, $\ncr _\alpha$, is given by 
$$\ncr_{\alpha} = (-1)^{\ell(\alpha)} \sum _{\beta \succcurlyeq \alpha} (-1)^{\ell (\beta)}  \nch_{\beta }.$$

\begin{example}\label{ex:ncribbons}
$\ncr_{12}=\nch _{12} - \nch _3$
\end{example}

Many other bases exist in analogy to those in $\Sym$, however, it is these two that are most relevant to us, in addition to one other \svw{consisting of immaculate functions}. The \emph{immaculate function}, $\Sn _\alpha$, is given by \cite[Theorem 3.27]{BBSSZ}
$$\Sn_{\alpha}=\sum_{\varepsilon\in \Sn_{\ell(\alpha)}} {\rm sgn}(\varepsilon) \nch_{\alpha+\varepsilon-{\id}}$$ where $\alpha+\varepsilon-{\id}$ is a list of length $\ell(\alpha)$ such that its $i$th part is $$\alpha_i+\varepsilon(i)-i.$$We  let  $\nch _0 = 1$ and $\nch_{\alpha+\varepsilon-{\id}} = 0$ if any $\alpha_i+\varepsilon(i)-i$ is negative.

We now relate our three Hopf algebras. Consider the following diagram,
$$
\begin{tikzpicture}
\node(N) at (0,0){${\NSym}$};
\node(NC) at (0,2){${\NCSym}$};
\node(SU) at (3,2){${\Sym}$};
\node at (-0.2,1){$\jota$};
\node at (1.5,2.2){$\rho$};
\node at (1.6,0.8){$\chi$};

\draw[thick,->] (N)--(SU);
\draw[thick,->] (NC)--(SU);
\draw[thick,->] (N)--(NC);
\end{tikzpicture} 
$$
where, for a composition $\alpha$, the map $\jota$ is  given by 
$$\jota(\nch_\alpha)=\frac{1}{\alpha!}h_{[\alpha]},$$ and the map $\chi$ that lets the variables commute is given by $$\chi(\nch_\alpha)=h_{\lambda(\alpha)}.$$\sagan{The map $\jota$ is a Hopf morphism that was originally defined in the $m$-basis \cite[Theorem 4.6]{BRRZ}. However, for our purposes the equivalent definition in the $h$-basis is most useful.}
Note the above diagram commutes since \svwrevtwo{by Lemma~\ref{lem:RSrho} 4) we have that}
$$\rho(\jota(\nch_\alpha))=\rho\left(\frac{1}{\alpha!}h_{[\alpha]}\right)=\alpha!\left(\frac{1}{\alpha!} h_{\lambda(\alpha)}\right)=h_{\lambda(\alpha)}=\chi(\nch_\alpha).$$

It transpires that under $\jota$, noncommutative ribbon Schur functions and immaculate functions indexed by integer partitions in $\NSym$ map to source skew Schur functions in $\NCSym$. However, before we prove this result we will narrow our focus.

Recall that in $\Sym$ when $\lambda/ \mu$ is a \emph{ribbon diagram}, namely a skew diagram that is edgewise connected with no $2\times2$ subdiagram,  we often denote $s_{\lambda / \mu}$ by $r_\alpha$ where $\alpha$ is the composition given by the row lengths of $\lambda / \mu$ taken in order from top to bottom, and we call $r_\alpha$ a \emph{ribbon Schur function}. Similarly let us denote $s_{[\lambda / \mu]}$, when $\lambda / \mu$ is a ribbon diagram \svw{corresponding to} $\alpha$, by $r_{[\alpha]}$ and call it a \emph{source ribbon Schur function in noncommuting variables}. 

\begin{corollary}\label{cor:ribbons} We have the following for compositions $\alpha$ and $\beta$.
\begin{enumerate}[1)]
\item $r_{[\alpha]} = (-1)^{\ell(\alpha)} \sum _{\beta \succcurlyeq \alpha} \frac{(-1)^{\ell (\beta)}}{\beta !} h_{[\beta ]}$
\item $r_{[\alpha]} r_{[\beta]} = r_{[\alpha \cdot \beta]} + r_{[\alpha \odot \beta]}$
\item $\rho (r_{[\alpha]}) = r_\alpha$
\end{enumerate}
\end{corollary}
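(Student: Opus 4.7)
I plan to address the three parts in reverse order of difficulty.

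Part~3 is immediate: $r_{[\alpha]}$ is by definition $s_{[\lambda/\mu]}$ where $\lambda/\mu$ is the ribbon realizing $\alpha$, and $r_\alpha = s_{\lambda/\mu}$, so Proposition~\ref{prop:rhoskew} yields $\rho(r_{[\alpha]}) = r_\alpha$ directly.

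For Part~2, I would use Part~1 to expand $r_{[\alpha]} r_{[\beta]}$, $r_{[\alpha \cdot \beta]}$, and $r_{[\alpha \odot \beta]}$ in the $h$-basis and match coefficients. The coarsenings of $\alpha \cdot \beta$ partition according to whether the cut at position $\ell(\alpha)$ is preserved. Preserved coarsenings are exactly concatenations $\gamma \cdot \delta$ with $\gamma \succcurlyeq \alpha$ and $\delta \succcurlyeq \beta$, and since $h_{[\gamma \cdot \delta]} = h_{[\gamma]} h_{[\delta]}$ by Lemma~\ref{lem:hmult} and $(\gamma \cdot \delta)! = \gamma! \delta!$, these contribute exactly the expansion of $r_{[\alpha]}r_{[\beta]}$ inside the expansion of $r_{[\alpha \cdot \beta]}$. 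Unpreserved coarsenings are in bijection with coarsenings of $\alpha \odot \beta$ by identifying the part that merges $\alpha_{\ell(\alpha)}$ with $\beta_1$, and their parts and factorials coincide; after accounting for the sign shift coming from $\ell(\alpha \odot \beta) = \ell(\alpha) + \ell(\beta) - 1$, their contribution equals $-r_{[\alpha \odot \beta]}$. Rearranging gives the product rule.

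Part~1 is the substantive step. My plan is to expand the noncommutative Jacobi-Trudi determinant defining $r_{[\alpha]} = s_{[\lambda/\mu]}$ via Equation~\eqref{eq:leibniz}. A short computation in the ribbon geometry shows that if $\alpha = \alpha_1 \cdots \alpha_k$, then $\lambda_i - \mu_j - i + j$ equals $\alpha_i + \cdots + \alpha_j$ for $i \leq j$, equals $0$ for $j = i - 1$, and is strictly negative otherwise. Hence the $k \times k$ matrix is upper Hessenberg, with ones on the subdiagonal and $\frac{1}{(\alpha_i + \cdots + \alpha_j)!} h_{[\alpha_i + \cdots + \alpha_j]}$ on and above the diagonal. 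In the Leibniz sum, the only contributing permutations $\tau$ satisfy $\tau(i) \geq i - 1$ for all $i$; I claim such $\tau$ are parametrized by the set $T = \{i : \tau(i) = i - 1\} \subseteq \{2, \ldots, k\}$, because the constraint $\tau(i) \geq i$ for $i \notin T$ together with $\tau$ being a bijection forces a unique completion in which each maximal run $\{a, a+1, \ldots, b\} \subseteq T$ produces a cycle $(a-1, b, b-1, \ldots, a)$ of length $b-a+2$ and sign $(-1)^{b-a+1}$, while all other indices are fixed points; summing over runs, $\mathrm{sgn}(\tau) = (-1)^{|T|}$. The subsets $T$ correspond bijectively to coarsenings $\beta \succcurlyeq \alpha$ by merging $\alpha_{a-1}, \ldots, \alpha_b$ for each run of $T$, with $|T| = \ell(\alpha) - \ell(\beta)$. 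Taking the product of the surviving entries in row order and invoking Lemma~\ref{lem:hmult} yields contribution $(-1)^{|T|} \frac{1}{\beta!} h_{[\beta]}$, and summing over $T$ gives $(-1)^{\ell(\alpha)} \sum_{\beta \succcurlyeq \alpha} \frac{(-1)^{\ell(\beta)}}{\beta!} h_{[\beta]}$. The main obstacle will be the combinatorial bookkeeping of the permutation-to-coarsening bijection and checking that the row-order multiplication produces $h_{[\beta]}$ with parts in the correct left-to-right order.
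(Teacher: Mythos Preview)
Your proof is correct but inverts the paper's logical order. The paper obtains Part~2 first, as an immediate consequence of Theorem~\ref{the:prod}: the block-matrix argument given there for $s_{[\lambda]}s_{[\mu]}=s_{[\lambda\cdot\mu]}+s_{[\lambda\odot\mu]}$ works verbatim for skew shapes, hence for ribbons. It then deduces Part~1 by a one-line induction on $\ell(\alpha)$, rewriting Part~2 as
\[
r_{[\alpha]} = r_{[\alpha_1]}\, r_{[\alpha_2\cdots\alpha_{\ell(\alpha)}]} - r_{[(\alpha_1+\alpha_2)\alpha_3\cdots\alpha_{\ell(\alpha)}]}
\]
and using $r_{[\alpha_1]}=\frac{1}{\alpha_1!}h_{[\alpha_1]}$ together with Lemma~\ref{lem:hmult}. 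You instead establish Part~1 directly by recognizing the ribbon Jacobi--Trudi matrix as upper Hessenberg and parametrizing the surviving Leibniz terms by coarsenings of $\alpha$, then feed Part~1 into a coefficient comparison to get Part~2. Your route is more explicit and entirely self-contained, at the cost of the combinatorial bookkeeping you flag; the paper's route is shorter because the block-matrix work was already done in Theorem~\ref{the:prod}. Part~3 is handled identically in both.
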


\begin{proof} The second and third parts follow immediately by Theorem~\ref{the:prod} and Proposition~\ref{prop:rhoskew}. For the first part, note that if $\alpha=\alpha_1 \alpha _2 \cdots  \alpha _{\ell(\alpha)}$, then by the second part $$r_{[\alpha]} = r_{[\alpha _1]} r _{[\alpha _ 2 \cdots \alpha _{\ell(\alpha)}]} -  r_{[(\alpha_1 +\alpha _2) \alpha _3 \cdots \alpha _{\ell(\alpha)}]}$$and together with induction on $\ell(\alpha)$ gives the desired result.
\end{proof}

\begin{proposition}\label{prop:iotaacts} We have the following for composition $\alpha$ and integer partition $\lambda$.
\begin{enumerate}[1)]
\item $\jota (\ncr _\alpha) = r_{[\alpha]}$
\item $\jota (\Sn _\lambda) = s_{[\lambda]}$
\end{enumerate}
\end{proposition}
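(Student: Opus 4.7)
Both parts will follow by unfolding the relevant definitions and matching them up; the map $\iota$ is linear and sends $\nch_\beta$ to $\tfrac{1}{\beta!}h_{[\beta]}$, so in each case we just have to push this through the given formula.

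For part 1), I would start from the defining expansion
$$\ncr_\alpha = (-1)^{\ell(\alpha)}\sum_{\beta\succcurlyeq \alpha}(-1)^{\ell(\beta)}\nch_\beta,$$
apply $\iota$ termwise, and obtain
$$\iota(\ncr_\alpha)=(-1)^{\ell(\alpha)}\sum_{\beta\succcurlyeq \alpha}\frac{(-1)^{\ell(\beta)}}{\beta!}h_{[\beta]}.$$
This is exactly the expression for $r_{[\alpha]}$ given by Corollary~\ref{cor:ribbons}\,1), so the first statement is immediate.

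For part 2), the idea is to expand $s_{[\lambda]}$ by the noncommutative Leibniz formula in Equation~\eqref{eq:leibniz} and match the result with $\iota(\Sn_\lambda)$. Writing the $(i,j)$-entry of $\JT_{[\lambda]}$ as $\tfrac{1}{(\lambda_i-i+j)!}h_{[\lambda_i-i+j]}$, the Leibniz expansion gives
$$s_{[\lambda]}=\sum_{\varepsilon\in\Sn_{\ell(\lambda)}}\mathrm{sgn}(\varepsilon)\prod_{i=1}^{\ell(\lambda)}\frac{1}{(\lambda_i-i+\varepsilon(i))!}h_{[\lambda_i-i+\varepsilon(i)]},$$
where the product is taken in increasing order of $i$. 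By repeated application of Lemma~\ref{lem:hmult}, the product of the $h_{[\lambda_i-i+\varepsilon(i)]}$ collapses to $h_{[\lambda+\varepsilon-\id]}$, with the conventions $h_{[0]}=1$ (so zero parts drop out) and any negative part killing the whole term, exactly as in the conventions for $\Sn_\lambda$. Since $\prod_i (\lambda_i-i+\varepsilon(i))! = (\lambda+\varepsilon-\id)!$ (zero parts contributing a factor of $1$), this becomes
$$s_{[\lambda]}=\sum_{\varepsilon\in\Sn_{\ell(\lambda)}}\mathrm{sgn}(\varepsilon)\frac{1}{(\lambda+\varepsilon-\id)!}h_{[\lambda+\varepsilon-\id]}.$$
On the other side, applying $\iota$ to $\Sn_\lambda=\sum_{\varepsilon}\mathrm{sgn}(\varepsilon)\nch_{\lambda+\varepsilon-\id}$ yields the same sum, and the two expressions agree term by term.

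The only real subtlety, and the step I would double-check carefully, is the bookkeeping when $\lambda_i-i+\varepsilon(i)=0$ or is negative: one needs $h_{[0]}=h_\emptyset=1$, $0!=1$, and the convention that a negative index kills the whole term, all of which are already built into both Definition~\ref{def:sourceskew} and the definition of $\Sn_\lambda$. Everything else is just linearity of $\iota$ and the multiplicativity $h_\pi h_\sigma=h_{\pi\slashp\sigma}$ from Lemma~\ref{lem:hmult}.
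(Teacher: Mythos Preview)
Your proof is correct and follows essentially the same approach as the paper. The only cosmetic difference is in part 2): the paper cites Equation~\eqref{eq:newskew} (the expanded form of $s_{(\delta,\lambda/\mu)}$ with $\delta=\id$, $\mu=\emptyset$) directly, whereas you re-derive that identity inline from the Leibniz formula and Lemma~\ref{lem:hmult}; the content is identical.
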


\begin{proof}
Note that
$$\jota(\ncr_{\alpha}) = (-1)^{\ell(\alpha)} \sum _{\beta \succcurlyeq \alpha} (-1)^{\ell (\beta)}  \jota(\nch_{\beta })  = (-1)^{\ell(\alpha)} \sum _{\beta \succcurlyeq \alpha} \frac{(-1)^{\ell (\beta)}}{\beta !} h_{[\beta ]} = r_{[\alpha]}$$by Corollary~\ref{cor:ribbons} and
$$\jota(\Sn _{\lambda})=\sum_{\varepsilon\in \Sn_{\ell(\lambda)}} {\rm sgn}(\varepsilon) \jota(\nch_{\lambda+\varepsilon-{\id}}) = \sum_{\varepsilon\in \Sn_{\ell(\lambda)}}  \frac{{\rm sgn}(\varepsilon)}{(\lambda+\varepsilon-{\id})!}h_{[\lambda+\varepsilon-{\id}]} = s_{[\lambda]}$$by Equation~\eqref{eq:newskew}.
\end{proof}

To conclude, note that in these cases, our functions in $\NCSym$ hence immediately inherit results from $\NSym$ such as the right Pieri rule for immaculate functions \cite{BBSSZ} or the product rule for noncommutative ribbon Schur functions
$$\ncr_{\alpha} \ncr_{\beta} = \ncr_{\alpha \cdot \beta} + \ncr_{\alpha \odot \beta}$$that recovers 
$$r_{[\alpha]} r_{[\beta]} = r_{[\alpha \cdot \beta]} + r_{[\alpha \odot \beta]}.$$

\section{Acknowledgments}\label{sec:ack}  \svwrev{The authors would like to thank  Darij Grinberg, Bruce Sagan, Yannic Vargas and Victor Wang for helpful suggestions, and the referees for excellent recommendations, thoughtful comments, and careful attention to our paper. In particular, they would like to thank one referee for suggesting further avenues and directing us to the thesis of Anouk Bergeron-Brleck, and thank the other referee for letting us use their improved prose in the proof of Theorem~\ref{the:prod}, and for their improvements and simplifications to the proofs of Theorem~\ref{the:ncschur}, Corollary~\ref{cor:permbases}, Theorem~\ref{the:deltaact} and Lemma~\ref{lem:swapmon}.}

\bibliographystyle{plain}

\def\cprime{$'$}

\end{document}